\theoremstyle{plain}
\newtheorem{theorem}{Theorem}[section]
\newtheorem{lemma}[theorem]{Lemma}
\newtheorem{prop}[theorem]{Proposition}
\newtheorem{cor}[theorem]{Corollary}
\newtheorem{defi}[theorem]{Definition}
\theoremstyle{definition}
\newtheorem{remark}[theorem]{Remark} 
\font\ninerm=cmr9  \font\eightrm=cmr8  \font\sixrm=cmr6
\font\ninei=cmmi9  \font\eighti=cmmi8  \font\sixi=cmmi6
\font\ninesy=cmsy9 \font\eightsy=cmsy8 \font\sixsy=cmsy6
\font\ninebf=cmbx9 \font\eightbf=cmbx8 \font\sixbf=cmbx6
\font\nineit=cmti9 \font\eightit=cmti8 
\font\ninett=cmtt9 \font\eighttt=cmtt8 
\font\ninesl=cmsl9 \font\eightsl=cmsl8
\font\twelverm=cmr12 at 15pt
\font\twelvei=cmmi12 at 15pt
\font\twelvesy=cmsy10 at 15pt
\font\twelvebf=cmbx12 at 15pt
\font\twelveit=cmti12 at 15pt
\font\twelvett=cmtt12 at 15pt
\font\twelvesl=cmsl12 at 15pt
\font\twelvegoth=eufm10 at 15pt
\font\tengoth=eufm10  \font\ninegoth=eufm9
\font\eightgoth=eufm8 \font\sevengoth=eufm7 
\font\sixgoth=eufm6   \font\fivegoth=eufm5
\def\goth{\fam\gothfam\tengoth} 
\newskip\ttglue
\def\tenpoint{\def\rm{\fam0\tenrm}
  \textfont0=\tenrm \scriptfont0=\sevenrm
  \scriptscriptfont0\fiverm
  \textfont1=\teni \scriptfont1=\seveni
  \scriptscriptfont1\fivei 
  \textfont2=\tensy \scriptfont2=\sevensy
  \scriptscriptfont2\fivesy 
  \textfont3=\tenex \scriptfont3=\tenex
  \scriptscriptfont3\tenex 
  \textfont\itfam=\tenit\def\it{\fam\itfam\tenit}%
  \textfont\slfam=\tensl\def\sl{\fam\slfam\tensl}%
  \textfont\ttfam=\tentt\def\tt{\fam\ttfam\tentt}%
  \textfont\gothfam=\tengoth\scriptfont\gothfam=\sevengoth 
  \scriptscriptfont\gothfam=\fivegoth
  \def\goth{\fam\gothfam\tengoth}
  \textfont\bffam=\tenbf\scriptfont\bffam=\sevenbf
  \scriptscriptfont\bffam=\fivebf
  \def\bf{\fam\bffam\tenbf}%
  \tt\ttglue=.5em plus.25em minus.15em
  \normalbaselineskip=12pt \setbox\strutbox\hbox{\vrule
  height8.5pt depth3.5pt width0pt}%
  \let\big=\tenbig\normalbaselines\rm}
\def\ninepoint{\def\rm{\fam0\ninerm}
  \textfont0=\ninerm \scriptfont0=\sixrm
  \scriptscriptfont0\fiverm
  \textfont1=\ninei \scriptfont1=\sixi
  \scriptscriptfont1\fivei 
  \textfont2=\ninesy \scriptfont2=\sixsy
  \scriptscriptfont2\fivesy 
  \textfont3=\tenex \scriptfont3=\tenex
  \scriptscriptfont3\tenex 
  \textfont\itfam=\nineit\def\it{\fam\itfam\nineit}%
  \textfont\slfam=\ninesl\def\sl{\fam\slfam\ninesl}%
  \textfont\ttfam=\ninett\def\tt{\fam\ttfam\ninett}%
  \textfont\gothfam=\ninegoth\scriptfont\gothfam=\sixgoth 
  \scriptscriptfont\gothfam=\fivegoth
  \def\goth{\fam\gothfam\tengoth}
  \textfont\bffam=\ninebf\scriptfont\bffam=\sixbf
  \scriptscriptfont\bffam=\fivebf
  \def\bf{\fam\bffam\ninebf}%
  \tt\ttglue=.5em plus.25em minus.15em
  \normalbaselineskip=11pt \setbox\strutbox\hbox{\vrule
  height8pt depth3pt width0pt}%
  \let\big=\ninebig\normalbaselines\rm}
\def\eightpoint{\def\rm{\fam0\eightrm}
  \textfont0=\eightrm \scriptfont0=\sixrm
  \scriptscriptfont0\fiverm
  \textfont1=\eighti \scriptfont1=\sixi
  \scriptscriptfont1\fivei 
  \textfont2=\eightsy \scriptfont2=\sixsy
  \scriptscriptfont2\fivesy 
  \textfont3=\tenex \scriptfont3=\tenex
  \scriptscriptfont3\tenex 
  \textfont\itfam=\eightit\def\it{\fam\itfam\eightit}%
  \textfont\slfam=\eightsl\def\sl{\fam\slfam\eightsl}%
  \textfont\ttfam=\eighttt\def\tt{\fam\ttfam\eighttt}%
  \textfont\gothfam=\eightgoth\scriptfont\gothfam=\sixgoth 
  \scriptscriptfont\gothfam=\fivegoth
  \def\goth{\fam\gothfam\tengoth}
  \textfont\bffam=\eightbf\scriptfont\bffam=\sixbf
  \scriptscriptfont\bffam=\fivebf
  \def\bf{\fam\bffam\eightbf}%
  \tt\ttglue=.5em plus.25em minus.15em
  \normalbaselineskip=9pt \setbox\strutbox\hbox{\vrule
  height7pt depth2pt width0pt}%
  \let\big=\eightbig\normalbaselines\rm}
\def\twelvepoint{\def\rm{\fam0\twelverm}
  \textfont0=\twelverm\scriptfont0=\tenrm
  \scriptscriptfont0\sevenrm
  \textfont1=\twelvei\scriptfont1=\teni
  \scriptscriptfont1\seveni 
  \textfont2=\twelvesy\scriptfont2=\tensy
  \scriptscriptfont2\sevensy 
   \textfont\itfam=\twelveit\def\it{\fam\itfam\twelveit}%
  \textfont\slfam=\twelvesl\def\sl{\fam\slfam\twelvesl}%
  \textfont\ttfam=\twelvett\def\tt{\fam\ttfam\twelvett}%
  \textfont\gothfam=\twelvegoth\scriptfont\gothfam=\ninegoth 
  \scriptscriptfont\gothfam=\sevengoth
  \def\goth{\fam\gothfam\twelvegoth}
  \textfont\bffam=\twelvebf\scriptfont\bffam=\ninebf
  \scriptscriptfont\bffam=\sevenbf
  \def\bf{\fam\bffam\twelvebf}%
  \tt\ttglue=.5em plus.25em minus.15em
  \normalbaselineskip=12pt \setbox\strutbox\hbox{\vrule
  height9pt depth4pt width0pt}%
  \let\big=\twelvebig\normalbaselines\rm}
\def\n{{\goth n}}  
\def\k{{\goth k}}
\def\g{{\goth g}}
\def\t{{\goth t}}
\def\l{{\goth l}}
\def\m{{\goth m}}
\def\p{{\goth p}}
\def\s{{\goth s}}
\def\uu{{\goth u}}
\def\ad{{\rm ad}}
\def\Ad{{\rm Ad}}
\def\a{{\mathfrak{a}}}
\def\s{{\mathfrak{s}}}
\def\m{{\mathfrak{m}}}   
\def\n{{\mathfrak{n}}}
\def\p{{\mathfrak{p}}}     
\def\k{{\mathfrak{k}}}
\def\t{{\mathfrak{t}}}
\def\g{{\mathfrak{g}}}
\def\l{{\mathfrak{l}}}
\def\Z{{\mathbb{Z}}}
\def\C{{\mathbb{C}}}
\def\R{{\mathbb{R}}}
\def\N{{\mathbb{N}}}
\def\bbigskip{\bigskip\bigskip}
\def\nbigskip{\bigskip\noindent}
\def\span{{\rm span}}
\def\conv{{\rm Conv}} \def\int{{\rm int}}
\def\buildunder#1#2{\mathrel{\mathop{\kern0pt #2}
\limits_{#1}}}
\def\qq{/\kern-.185em /}
\def\pn{\par\noindent}
\def\sn{\smallskip\noindent}
\def\mn{\medskip\noindent}
\def\bn{\bigskip\noindent}
\def\REM #1{}
\begin{document}

\title[Orbit structure]{Invariant envelopes of holomorphy in  the complexification of a Hermitian symmetric space}

\bbigskip

\author[Geatti]{L. Geatti}
\author[Iannuzzi]{A. Iannuzzi}
\address{Laura Geatti and Andrea Iannuzzi: Dip.~di Matematica,
II Universit\`a di Roma  ``Tor Vergata", Via della Ricerca Scientifica,
I-00133 Roma, Italy.} 
\email{geatti@mat.uniroma2.it,
       iannuzzi@mat.uniroma2.it}

\thanks {\ \ {\it Mathematics Subject Classification (2010):} 
32M05, 32Q28, }

\thanks {\ \ {\it Key words}: Hermitian symmetric space, Lie group complexification, envelope   of holomorphy, invariant Stein domain}


\bigskip
\begin{abstract}
In this paper we investigate   invariant  domains in $\, \Xi^+$,  a distinguished  $\,G$-invariant,  Stein domain  in the complexification  of an irreducible Hermitian symmetric space $\,G/K$. The domain $\,\Xi^+$, recently introduced by Kr\"otz and Opdam,  contains  the crown domain $\,\Xi\,$ and it is maximal with respect to  properness
of the  $\,G$-action. In the tube case, it also contains $\,S^+$, an invariant
Stein domain arising from the compactly causal structure  of a symmetric orbit in the boundary of $\,\Xi$.  We prove that the envelope of holomorphy of an  invariant domain in $\,\Xi^+$, which is contained neither in $\,\Xi\,$ nor in $\,S^+$, is univalent and
coincides with $\,\Xi^+$.  This fact, together with known results concerning
$\,\Xi\,$ and $\,S^+$, proves the univalence of the envelope  of holomorphy of an arbitrary invariant domain  in $\,\Xi^+\,$ and   completes the classification of invariant Stein  domains therein.
\end{abstract}

\maketitle


\bigskip

\section{Introduction}
\bigskip
Let $\,G/K\,$ be a non-compact, irreducible, Riemannian symmetric space. Its
Lie group  complexification  $\,G^\C/K^\C\,$ is a Stein manifold and
left translations by  elements of $\,G\,$ are holomorphic transformations of
$\,G^\C/K^\C$.
In this situation, $G$-invariant domains in $\,G^\C/K^\C\,$ and their
envelopes of holomorphy are natural objects to study.

A first  example is given by
the crown $\,\Xi$, introduced by D. N. Akhiezer and S. G. Gindikin in \cite{AkGi90}.
This Stein invariant domain carries an invariant K\"ahler structure
intrinsically associated with the Riemannian structure  of the symmetric space $G/K$ and,
in many respects,
can be regarded as its canonical  complexification.
In recent years, it  has  been  extensively studied in connection with harmonic analysis on $\,G/K$ (see, e.g \cite{KrSt04}, \cite{KrSt05}).

If $\,G/K\,$ is a Hermitian symmetric space of tube type, two additional
distinguished invariant  Stein domains $\,S^\pm\,$ arise from the compactly casual structure of a pseudo-Riemannian  symmetric space $\,G/H\,$  lying
on the boundary of $\,\Xi$. The  complex geometry  of $\,S^\pm\,$ was
studied by K. H. Neeb in \cite{Nee99}. Inside  the crown $\,\Xi$,
as well as inside
$\,S^\pm\,,$ an invariant domain  can be described via a semisimple abelian  slice,
its envelope of holomorphy  is univalent and Steiness
is characterized by logarithmic  convexity of such a  slice.

One may ask how far  the above results are  from a complete description
of envelopes of holomorphy and 
 a classification of invariant Stein domains  in $\,G^\C/K^\C\,$.
In
\cite{GeIa08}, 
a  univalence result
for $\,G$-equivariant Riemann domains over $\,G^\C/K^\C\,$, and in particular for envelopes of holomorphy,  was proven  in the rank-one case.  In addition, the complete  classification of invariant Stein domains was obtained.
From this  result  one sees that, up to finitely many exceptions, all invariant Stein domains  are  contained either  in $\,\Xi\,$ or, in the Hermitian case of tube type,  in $\,S^\pm$.
Moreover, the study  the CR-structure of principal $\,G$-orbits in $\,G^\C/K^\C$ (i.e.\,closed orbits of maximal dimension)
carried out  in \cite{Gea02}, suggests   that  the latter fact  holds true also  in the higher rank case, the exceptions being finitely  many invariant  domains whose boundary entirely consists of non-principal $\,G$-orbits.

Here we focus on the case of $G/K$ irreducible of Hermitian type.
In  this   case,  B. Kr\"otz and E. Opdam
recently singled out  two Stein, invariant domains
$\,\Xi^+\,$ and $\,\Xi^-\,$ in
$\,G^\C/K^\C$, satisfying  $\,\Xi^+ \cap \Xi^- =\Xi\,$ and   maximal with respect
to properness of the $\,G$-action.
The   relevance of the crown $\,\Xi\,$ and of the domains  $\,\Xi^+\,$ and $\,\Xi^-\,$ for the representation theory of $\,G\,$  was underlined
in Theorem 1.1 in \cite{Kro08}.
Since  $\,\Xi^+\,$ and $\,\Xi^-\,$ are $\,G$-equivariantly anti-biholomorphic, in the sequel we simply
refer  to $\,\Xi^+$.
If $\,G/K\,$ is Hermitian of tube type,  then $\,\Xi^+$  contains both the
crown $\,\Xi\,$ and  the domain $\,S^+$ (\cite{GeIa13}, Prop. 8.7). Moreover,
for $\, r:={\rm rank}(G/K)>1$, the complement of $\,\Xi\cup S^+\,$ in
$\,\Xi^+\,$ has non-empty interior.
  Our main  result is as follows.

\bn
{\bf Theorem.}
{\sl   Let $\,G/K\,$ be an irreducible Hermitian symmetric space.
Given a $\,G$-invariant domain  $\,D \,$ in $\,\Xi^+$, denote by $\,\widehat D\,$
its envelope of holomorphy.
\pn
{\rm (i)}   Assume $\,G/K\,$ is of tube type. If $\,D\,$ is not contained in $\,\Xi\,$ nor in
$\,S^+$, then $\,\widehat D\,$ is univalent and coincides with $\,\Xi^+\,$.
\pn
{\rm (ii)}  Assume $\,G/K\,$ is not of tube type.  If $\,D\,$ is not contained in $\,\Xi\,$, then $\,\widehat D\,$ is univalent and coincides with $\,\Xi^+$.   }

\bigskip
The envelopes of holomorphy of invariant domains in $\,\Xi\,$ or $\,S^+\,$
are known to be univalent and their Steiness is characterized  in terms of the aformentioned semisimple
abelian slices.
Hence, the above theorem implies the univalence of  the envelope  of holomorphy
of an arbitrary  invariant domain  in $\,\Xi^+\,$ and yields the following
classification.

\mn
{\bf Corollary.}
{\sl
Let $\,G/K\,$ be an irreducible  Hermitian symmetric space and let $\,D\,$ be a Stein $\,G$-invariant proper domain in $\,\Xi^+$.
\pn
{\rm (i)}   If $\,G/K\,$ is of tube type, then  either
$\,D\subseteq \Xi\,$  or  $\, D\subseteq S^+$.
\pn
{\rm (ii)}  If $\,G/K\,$ is not of tube type, then $\,D\subseteq \Xi$. }

\bn

The    theorem is proved by  showing  that the natural
$\,G$-equivariant embedding
$\,f: D \to \widehat D\,$ admits a  holomorphic extension  $\,\hat f\colon \Xi^+\to \widehat D$ to the whole $\Xi^+$.
For this purpose, we use the unipotent, abelian slice of $\Xi^+$
pointed out by B.  Kr\"otz and E. Opdam in \cite{KrOp08}.
Namely, one has
$$\Xi^+=G\cdot \Sigma\, ,$$
where
$\,\Sigma :=\exp i {\Lambda_r^\llcorner} \cdot x_0\, \,$
and   $\,\Lambda_r^\llcorner\,$ is a
closed hyperoctant
in an $\,r$-dimensional, nilpotent, abelian subalgebra of $\,Lie(G)$.
This sets
a one-to-one correspondence
$$D \to \Sigma_D:= D \cap \Sigma $$
between
$\,G$-invariant domains in $\,\Xi^+$ and
domains in $\,\Sigma\,$ which are invariant under the action of an appropriate
Weyl group  (see Sect.\,3).

Then a key ingredient is given by Proposition 4.7,
which implies  that
a continuous extension of $\,f |_{\Sigma_D}\,$ to a domain
$\,\widetilde \Sigma\,$ in $\,\Sigma\,$
induces
a $\,G$-equivariant, holomorphic extension of $\,f\,$ on $\,G \cdot
\widetilde \Sigma\,$ provided that  certain  compatibility conditions
are satisfied.
In order to obtain $\,\hat f$, we therefore construct a continuous
extension of  $\,f|_{\Sigma_D}\,$ to $\,\Sigma\,$ satisfying
such compatibility conditions.

This is done in a finite number of steps.
At each step  we  extend $\,f|_{\Sigma_D}\,$ to a larger  domain $\,\widetilde \Sigma \subset \Sigma\,$ properly containing $\,\Sigma_D$.
Such  extensions are obtained  by  equivariantly embedding  in $\,G^\C/K^\C\,$
various lower dimensional complex homogenous manifolds $\,L^\C/H^\C$,  all of whose $\,L$-invariant domains have univalent and well understood envelopes of holomorphy.
The embedding of each  space $\,L^\C/H^\C\,$ is carefully chosen,  so that it intersects $\,D\,$ in some $\,L$-invariant domain $\,T\subset L^\C/H^\C$.
As a consequence,
the holomorphic map
$\,f|_T \colon T\to \widehat D\,$ extends $\,L$-equivariantly to
$\,\widehat T\to \widehat D\,$ and, in particular, yields
a real-analytic extension of $\,f|_{\Sigma_D}\,$ along the submanifold
$\,\widehat T \cap \Sigma$.   It turns out that for some choices of $\,L^\C/H^\C$ 
the intersection  $\,\widehat T \cap \Sigma\,$ is not 
open in $\,\Sigma$.  In these  cases,  an extension of $\,f|_{\Sigma_D}\,$ to an open domain $\,\widetilde \Sigma\subset \Sigma\,$ is obtained by embedding in $D$  a continuous family of  copies of $\,T$.

The real homogenous manifolds $\,L/H\,$ which play a role in our situation
are: real $\,r$-dimensional vector spaces acted on by $\,(\R^r,+)$, the Euclidean plane acted on by its isometry group,  and irreducible rank-one, Hermitian symmetric
spaces,  of both tube-type and non-tube type. In the latter
case, the univalence results on equivariant
Riemann domains obtained  in \cite{GeIa08} are  crucial.
The above  strategy was inspired by the work  of
K. H. Neeb on bi-invariant domains in the complexification of a Hermitian
semisimple Lie group (\cite{Nee98}).

The paper is organized as follows.
In section 2, we set up the notation and recall some preliminary facts which are needed in the paper. In section 3, we recall the unipotent paramentrization of
$\,\Xi^+\,$
and of its $\,G$-invariant subdomains. In section 4, we recall  some basic facts about envelopes of holomorphy and develope the main tools used in the proof of the main theorem.  In section 5 we prove the main theorem.

\bn

\bn

\section{Preliminaries}
 
\bigskip
Let $G/K$ be an irreducible Hermitian symmetric space of the non-compact type. 
 We may assume $G$ to be a connected,  non-compact,  real simple Lie group contained in its   simple,  simply connected  universal complexification~$G^\C$, and $K$  to be a maximal compact subgroup of $G$. Denote by $\g$ and $\k$ the Lie algebras of $G$ and $K$, respectively. 
Denote by  $\theta$ both  the Cartan involution of $G$ with respect to $K$ and the associated involution of $\g$. Let $\g=\k\oplus \p$ be the corresponding Cartan decomposition.  
 Let  $\a$ be a maximal abelian subspace in $\p$. The {\it rank} of $G/K$ is by definition  $r=\dim \a$.  The adjoint action of $\a$   decomposes $\g$  as
 $$\g= \a\oplus Z_{\k}(\a)\oplus\bigoplus_{\alpha\in\Delta(\g,\a)}\g^\alpha,$$ where $Z_{\k}(\a)$ is the centralizer of $\a$ in $\k$, the joint eigenspace  $\g^\alpha=\{X\in\g~|~ [H,X]=\alpha(H)X, {\rm \ for\  every \ } H\in\a\}$ is the $\alpha$-restricted root space and  $\Delta(\g,\a)$ consists of those  $\alpha\in\a^*$ for which $\g^\alpha\not=\{0\} $.
A set of simple roots $\Pi_\a$ in $\Delta(\g,\a)$ uniquely determines a set of  positive restricted roots $\Delta^+(\g,\a)$ and   an Iwasawa decomposition of $\g$  
$$\g=\k\oplus \a\oplus \n,\qquad\hbox{where}\quad \n=\bigoplus_{\alpha\in\Delta^+(\g,\a)}\g^\alpha\,.
$$
The restricted root system of a Lie algebra $\g$  of Hermitian type is either of type $C_r$ (if $G/K$ is of tube type) or of type $BC_r$ (if $G/K$ is not of tube type) (cf. \cite{Moo64}), i.e.  there exists a basis $\{e_1,\ldots,e_r\}$ of $\a^*$ for which   
$$\Delta(\g,\a)=\{\pm 2e_j, ~1\le j\le r,~~\pm e_j\pm e_k,~ 1\le j\not= k\le r\},\quad \hbox{ for type $C_r$}, $$
$$\Delta(\g,\a)=\{\pm e_j,~\pm 2e_j,~1\le j\le r,~~\pm e_j\pm e_k,~~1\le j\not= k\le r\},\quad \hbox{ for type $BC_r$}.$$
Since  $\g$   admits  a compact Cartan subalgebra $\t\subset \k\subset \g$, there exists   a  set of $r$ positive long strongly orthogonal restricted  roots
 $\{\lambda_1,\ldots,\lambda_r\} $ (i.e. such that  $\lambda_j\pm \lambda_k\not\in\Delta(\g,\a)$, for $j\not=k$), which  are restrictions of {\it real}  roots  with respect to a maximally split $\theta$-stable Cartan subalgebra  $\l$  of $\g$ extending $\a$.  

Taking  as simple roots
$
\Pi_\a=\{ e_1-e_2,\ldots, e_{r-1}-e_r,2e_r\}$, for type $C_r$, and  
$\Pi_\a=\{ e_1-e_2,\ldots, e_{r-1}-e_r, e_r\}$, for type $BC_r$, 
one has 
$$
\lambda_1=2e_2,\ldots,\lambda_r=2e_r \,.
$$

Let $Z_0$ be the element in $Z(\k)$ defining the complex structure $J_0=\ad_{Z_0}$ on $G/K$.
 For   $j=1,\ldots,r$, choose $E_j \in \g^{\lambda_j}$ such that the $\s \l (2)$-triple 
$$ \{E_j,~\theta E_j,~A_j:=[\theta E_j,E_j]\}$$
is normalized as follows  
\begin{equation}
\label{TRIPLES}
[A_j,E_j]=2E_j,\quad 
[Z_0,E_j-\theta E_j]=A_j,\quad [Z_0,A_j]=-(E_j-\theta E_j) \,. 
\end{equation}
Then the vectors $\{A_1,\ldots,A_r\} $ form  an orthogonal basis of $\a$ (with respect to the restriction of the Killing form) and  
\begin{equation}
\label{RELATIONS2}
[E_j,E_k]=[E_j,\theta E_k]=0,\quad [A_j,E_k]=\lambda_k(A_j)E_k=0, \quad {\rm for}~j\not=k\, .
\end{equation}
That is, the above $\s \l (2)$-triples commute with each other.
Moreover, under the above choices, the element $Z_0$ is given by
\begin{equation}
\label{CENTER}
 Z_0=S+{1\over 2}\sum T_j,
 \end{equation}
where $T_j=E_j+\theta E_j$ and  $S\in  Z_\k(\a)$ (see Lemma 2.4 in \cite{GeIa13}). 
If $G/K$ is of tube type 
one has  $S=0$.

In the sequel, we denote by $\g_j$ the $\s\l(2)$-triple corresponding to the root $\lambda_j\in\{\lambda_1,\ldots,\lambda_r\}$, and by $G_j$ the corresponding connected subgroup of $G$.
In the non-tube case,  to each  $\lambda_j$   one can also associate a connected, simple, real rank-one Hermitian subgroup 
$ G_j^\bullet$ of $G$.  The  group $G_j^\bullet$ is by definition the connected, 
 $\theta$-stable subgroup of $G$ with Lie algebra 
 $$\g_j^\bullet=\R A_i\oplus \g^{\pm \lambda_j/2}\oplus  \g^{\pm \lambda_j}$$
  isomorphic to $\s\uu(m,1)$,  for some $ m>1$ (see \cite{Kna04}). 

\bigskip
\begin{lemma}\label{COMMUTATIVITY}
Let $G/K$ be an irreducible Hermitian symmetric space, which is not of tube type. Let  $G_j^\bullet$ be the simple real rank-one Hermitian subgroup associated to the root $\lambda_j$, for some $j\in\{1,\ldots,r\}$. Then $G_j^\bullet$ commmutes with  the   subgroups $G_k$, for every $k\not=j$. 
 \end{lemma}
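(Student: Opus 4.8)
The plan is to work at the level of root spaces and use the commutation relations \eqref{RELATIONS2} together with the structure of $\g_j^\bullet$. The group $G_j^\bullet$ is the connected subgroup with Lie algebra $\g_j^\bullet = \R A_j \oplus \g^{\pm\lambda_j/2}\oplus\g^{\pm\lambda_j}$, while for $k\neq j$ the group $G_k$ has Lie algebra $\g_k = \R A_k\oplus\g^{\pm\lambda_k}$ spanned by the $\s\l(2)$-triple $\{E_k,\theta E_k, A_k\}$. Since both subgroups are connected, it suffices to prove that $[\g_j^\bullet,\g_k]=0$, i.e.\ that every root-space summand of $\g_j^\bullet$ brackets trivially with each of $E_k$, $\theta E_k$, $A_k$.

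First I would dispose of the bracket with $A_k$: since $A_k\in\a$, for any restricted root $\mu$ and any $X\in\g^\mu$ one has $[A_k,X]=\mu(A_k)X$, so it is enough to check that $\mu(A_k)=0$ for $\mu\in\{\pm\lambda_j,\pm\lambda_j/2\}$. In the basis $\{e_1,\dots,e_r\}$ we have $\lambda_j=2e_{j+1}$ (after the indexing in the excerpt) and the $A_i$ are, up to scalars, dual to the $e_i$; concretely $\lambda_j(A_k)=\lambda_k(A_j)=0$ for $j\neq k$ is exactly \eqref{RELATIONS2}, and $\tfrac12\lambda_j(A_k)=0$ as well. Next, for the brackets with $E_k$ and $\theta E_k$ (which lie in $\g^{\pm\lambda_k}$), I would observe that $[\g^\mu,\g^{\pm\lambda_k}]\subseteq\g^{\mu\pm\lambda_k}$ and argue that $\mu\pm\lambda_k$ is never a restricted root nor $0$ when $\mu\in\{\pm\lambda_j,\pm\lambda_j/2\}$ and $j\neq k$. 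For $\mu=\pm\lambda_j$ this is precisely the strong orthogonality $\lambda_j\pm\lambda_k\notin\Delta(\g,\a)$, together with $\lambda_j\neq\lambda_k$ so the sum is not $0$; that handles $[\g^{\pm\lambda_j},\g_k]=0$, which is of course just the statement that the triples commute. The remaining case is $\mu=\pm\lambda_j/2=\pm e_{j+1}$: I would check in the explicit $BC_r$ root list that $\pm e_{j+1}\pm 2e_{k+1}$ (for $j\neq k$) is not among $\{\pm e_i,\pm 2e_i,\pm e_i\pm e_l\}$, and is nonzero, hence $[\g^{\pm\lambda_j/2},\g^{\pm\lambda_k}]=0$.

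The one point that needs slightly more care — and which I expect to be the main (mild) obstacle — is making sure the argument also covers brackets \emph{within} overlapping weight lines, e.g.\ whether $\g^{\lambda_j/2}$ and $\g^{-\lambda_j/2}$ could interact with $\g_k$ through a non-root target that happens to coincide with an actual root for a different reason; but since all the targets $\mu\pm\lambda_k$ computed above are genuinely outside $\Delta(\g,\a)\cup\{0\}$, each bracket is forced to be zero and no such interaction arises. Assembling these cases gives $[\g_j^\bullet,\g_k]=0$, and since $G$ is connected and both $G_j^\bullet$ and $G_k$ are connected, the corresponding subgroups commute in $G$, which is the assertion of Lemma~\ref{COMMUTATIVITY}. $\square$
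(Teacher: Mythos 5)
Your proposal is correct and follows essentially the same route as the paper: reduce to the Lie algebra level, use the commuting $\s\l(2)$-triples (relations (\ref{RELATIONS2})) for the $\g^{\pm\lambda_j}\oplus\R A_j$ part, observe that $\pm e_j\pm 2e_k$ is not a restricted root (and $e_j(A_k)=\delta_{jk}$) to kill $[\g^{\pm\lambda_j/2},\g_k]$, and conclude at the group level by connectedness. The extra worry you raise about "overlapping weight lines" is indeed vacuous, as you note, so there is no gap.
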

 
\smallskip
\begin{proof} 
By relations  (\ref{RELATIONS2}), one has $[\g_j, \g_k]\equiv 0$, for  $k \not= j$. Futhermore, since $\pm e_j\pm 2 e_k$,  for  $j\not=i$, are not roots in $\Delta(\g,\a)$ and $e_j(A_k)=\delta_{jk}$,  one also has   
$ [\g^{\pm \lambda_j/2},\g_k]\equiv 0 $.
Summarizing,  there is commutativity at Lie algebra level
$$[\g_j^\bullet,\g_k]\equiv 0, \quad \hbox{ for }  k\not= j $$
and likewise at group  level, by connectedness. 
 \end{proof}

\bn


\section{Invariant  subdomains of $\Xi^+$.}

\bigskip
 Let  $G/K$ be an irreducible Hermitian symmetric space of the non-compact type. Its complexification $G^\C/K^\C$ contains a distinguished $G$-invariant Stein subdomain~$\Xi^+$, properly containing  the crown $\Xi$,  and maximal with respect to proper $G$-action.
 
 \REM{
Let  $J_0$ be  the complex structure  of $\p$, and 
 let $\p^{1,0}$ and $\p^{0,1}$ be the $\pm i$-eigenspaces of  $J_0$ in $\p^\C$.  
Set $P := \exp \p^{0,1}$ and $Q:=K^\C P$.  Then $Q$ is a maximal parabolic subgroup of $G^\C$, the quotient  $G^\C/Q$ is the compact dual symmetric space of $G/K$ and  the $G$-equivariant map 
$$ G/K\to G^\C/Q,\qquad gK \to gQ  $$
   defines an open  holomorphic embedding of $G/K$  as the $G$-orbit  $G\cdot eQ$.
Denote by $\sigma$ the antiholomorpic involution of $G^\C$ 
   defining $G$. Then $\sigma(P) = \exp \p^{1,0}$ and $\sigma(Q)= K^\C \sigma(P)$ 
   is the opposite  parabolic subgroup, satisfying $Q\cap \sigma(Q)=K^\C$.  Denote by $\overline{G^\C/Q}$ the compact dual symmetric space endowed with the opposite complex structure, i.e. the   complex structure which makes
   the $G$-equivariant map  $\overline {G^\C/Q} \to G^\C/\sigma (Q)$, given by 
 $gQ \to \sigma(gQ)=\sigma(g) \sigma (Q)$,  a 
   biholomorphism.
   Let $G^\C$ act holomorphically on $G^\C/Q \times \overline {G^\C/Q}\,$ by
 $$g \cdot (x,y):= (g \cdot x, \sigma(g) \cdot y),$$
  and set $x_0:=(eQ,eQ)$. Then the map
$$G^\C/K^\C \to   G^\C/Q \times \overline {G^\C/Q},\qquad g\mapsto   g\cdot x_0
 $$
defines    an open dense  $G^\C$-equivariant  holomorphic embedding 
of $G^\C/K^\C $ into  
 $G^\C/Q \times \overline {G^\C/Q}\,$,  
 as the orbit through $x_0$.  
Let $\pi_1:G^\C/Q \times \overline {G^\C/Q} \to G^\C/Q $
denote the projection onto
the first factor.  
The domain $\Xi^+$ is defined as follows
 $$\Xi^+:= \pi_1^{-1}(G \cdot eQ) \cap G^\C\cdot x_0.$$ 
As $\Xi^+$ is a subdomain of $G^\C\cdot x_0$, it can be regarded as an open $G$-invariant domain in $G^\C/K^\C$. }

A description  of the domain $\Xi^+$ was given in \cite{Kro08}, p.286,  and  \cite{KrOp08}, Sect.8,  via its   unipotent  parametrization.
More precisely, fix vectors $E_j\in\g^{\lambda_j}$
normalized as in (\ref{TRIPLES}).  
Then 
$$\Xi^+=G\exp i\bigoplus_{j=1}^r(-1,\infty)E_j\cdot x_0.$$
Define the   
nilpotent abelian
 subalgebras 
 $$\Lambda_r:=\span_\R\{E_1,\ldots,E_r\}  \quad {\rm and} \quad
 \Lambda_r^\C:=\span_\C\{E_1,\ldots,E_r\}$$ 
  of $\n$ and $\n^\C$, respectively. The exponential map of $ G^\C$ 
defines a biholomorphism between $\Lambda_r^\C $
and the  unipotent abelian complex subgroup  $L^\C:=\exp \Lambda_r^\C$.  In particular, it restricts to a diffeomorphism between $\Lambda_r$ and the real unipotent subgroup $L:=\exp \Lambda_r$.
Since   the map  \begin{equation} \label{IOTA}
\iota:\n^\C \to N^\C \cdot x_0,\qquad Z \to \exp Z \cdot x_0,\end{equation}
is a biholomorphism onto its image (cf. Prop. 1.3 in \cite{KrSt04}), so is its restriction 
$\iota:\Lambda_r^\C \to L^\C \cdot x_0$.

\bigskip
\begin{lemma}
\label{STEINTUBE}
The intersection   $\Xi^+\cap L^\C\cdot x_0$ is a closed,   $r$-dimensional,
complex submanifold of $\Xi^+$, which is biholomorphic, via the map $\iota$, to
the Stein tube domain
$\Lambda_r \times i\bigoplus_{j=1}^r(-1,\infty)E_j$ of $\Lambda_r^\C$.
\end{lemma}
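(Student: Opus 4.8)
The plan is to combine the two pieces of structure already on the table: the unipotent parametrization $\Xi^+ = G\exp i\bigoplus_{j=1}^r(-1,\infty)E_j\cdot x_0$ together with the fact that $\iota\colon\Lambda_r^\C\to L^\C\cdot x_0$ is a biholomorphism onto its image. First I would identify the subset $\Xi^+\cap L^\C\cdot x_0$ explicitly inside $\Lambda_r^\C$. Under $\iota$, the submanifold $L^\C\cdot x_0$ corresponds to $\Lambda_r^\C$, and a point $\exp Z\cdot x_0$ with $Z=X+iY\in\Lambda_r^\C$ (so $X,Y\in\Lambda_r$) lies in $\Xi^+$ if and only if it lies in $G\exp i\bigoplus_j(-1,\infty)E_j\cdot x_0$. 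The key observation is that the real group $L=\exp\Lambda_r$ is contained in $G$, so for $X\in\Lambda_r$ one has $\exp(X+iY)\cdot x_0 = \exp X\cdot(\exp iY\cdot x_0)$, and since $\Lambda_r$ is abelian the element $\exp X$ is just a $G$-translation. Hence $\exp(X+iY)\cdot x_0\in\Xi^+$ if and only if $\exp iY\cdot x_0\in\Xi^+$, i.e. if and only if $Y\in\bigoplus_j(-1,\infty)E_j$. This shows $\iota^{-1}(\Xi^+\cap L^\C\cdot x_0) = \Lambda_r\times i\bigoplus_{j=1}^r(-1,\infty)E_j$, which is exactly the claimed tube domain; since $\iota$ is a biholomorphism onto $L^\C\cdot x_0$, the intersection is biholomorphic to this tube.

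Next I would record that this tube domain is Stein: a tube $\Omega\times i B$ over a convex base $B$ (here the open hyperoctant $\bigoplus_j(-1,\infty)E_j$, which is convex) is a domain of holomorphy, hence Stein, by the classical Bochner tube theorem — and a product of $\R^r$ with a Stein domain is Stein. This gives the word ``Stein'' in the statement. For the submanifold assertions: $\Xi^+\cap L^\C\cdot x_0$ is $r$-dimensional because it is an open subset of the $r$-dimensional complex submanifold $L^\C\cdot x_0$ (openness of the tube in $\Lambda_r^\C$ transported by the biholomorphism $\iota$), and it is a complex submanifold of $\Xi^+$ for the same reason.

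The remaining point — and the one I expect to require the most care — is \emph{closedness} of $\Xi^+\cap L^\C\cdot x_0$ in $\Xi^+$. This does \emph{not} follow from $L^\C\cdot x_0$ being closed in all of $G^\C/K^\C$ (it need not be), so one argues directly: a sequence in $\Xi^+\cap L^\C\cdot x_0$ converging in $\Xi^+$ corresponds, via $\iota^{-1}$, to a sequence $X_n+iY_n$ in the tube with $Y_n\in\bigoplus_j(-1,\infty)E_j$; one must show the limit point again lies in $L^\C\cdot x_0$ and that the limit of the $Y_n$ stays in the open hyperoctant rather than escaping to a face. The first is handled by the properness/closedness properties of the unipotent parametrization — more precisely, $L^\C\cdot x_0 \cong \Lambda_r^\C$ is closed in the larger $N^\C\cdot x_0$ via the biholomorphism $\iota$ of $\n^\C$, and $\Xi^+$ meets $G^\C/K^\C$ inside $N^\C\cdot x_0$-coordinates in a controlled way. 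For the second, the point is that a limit in $\Xi^+$ of points with $Y_n\to Y$ having some component $e_j(Y)=-1$ would be a boundary point of $\Xi^+$, contradicting that the limit lies in the open domain $\Xi^+$; this is exactly where the description $\Xi^+=G\exp i\bigoplus_j(-1,\infty)E_j\cdot x_0$ with the strict bound $-1$ is used. I would also invoke, if needed, that the $G$-action on $\Xi^+$ is proper (by maximality of $\Xi^+$ with respect to properness), which prevents the $G$-translation parts $\exp X_n$ from conspiring to move a genuinely boundary slice point back into the domain. Assembling these, the intersection is closed in $\Xi^+$, completing the proof.
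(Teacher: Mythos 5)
Your identification of $\iota^{-1}(\Xi^+\cap L^\C\cdot x_0)$ with the tube $\Lambda_r\times i\bigoplus_{j=1}^r(-1,\infty)E_j$ runs along the same line as the paper, but note that the step ``$\exp iY\cdot x_0\in\Xi^+$ if and only if $Y\in\bigoplus_j(-1,\infty)E_j$'' is not a formal consequence of the parametrization $\Xi^+=G\exp i\bigoplus_j(-1,\infty)E_j\cdot x_0$: the ``only if'' direction is a genuine statement about the slice (a priori $\exp iY\cdot x_0$ could equal $g\exp iY'\cdot x_0$ with $Y'$ in the orthant and $Y$ outside it), and the paper takes it from \cite{Kro08}, p.~286. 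So that step needs a citation or an argument, not an ``i.e.''. The Steinness, dimension and openness assertions are fine.

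The real gap is in your treatment of closedness. You assert that $L^\C\cdot x_0$ ``need not be'' closed in $G^\C/K^\C$ and then try to prove closedness of the intersection by hand; in fact $L^\C\cdot x_0$ \emph{is} closed in $G^\C/K^\C$, because $L^\C$ is a unipotent group acting algebraically on the affine variety $G^\C/K^\C$, and orbits of unipotent groups in affine varieties are closed (Rosenlicht, \cite{Ros61}, Thm.~2). This is exactly the one-line argument the paper uses here, and uses again in Lemma \ref{SOLVABLE}. Your substitute does not close the hole: the remark that $\Xi^+$ meets $G^\C/K^\C$ ``inside $N^\C\cdot x_0$-coordinates in a controlled way'' is vacuous, since $N^\C\cdot x_0$ has complex dimension $\dim\n=\dim\p-r$, strictly smaller than $\dim_\C G^\C/K^\C=\dim\p$, whereas $\Xi^+$ is open; so closedness of $\Lambda_r^\C$ in $\n^\C$ only gives closedness of $L^\C\cdot x_0$ inside $N^\C\cdot x_0$, which says nothing about its closure in $\Xi^+$. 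Moreover your sequence argument never excludes the actual danger, namely that $X_n+iY_n$ escapes to infinity in $\Lambda_r^\C$ while $\exp(X_n+iY_n)\cdot x_0$ converges in $\Xi^+$; properness of the $G$-action does not help, because the slice points $\exp iY_n\cdot x_0$ need not remain in a compact subset, so no compactness of the translating elements $\exp X_n$ can be extracted. (Your worry about a limit $Y$ with some $e_j(Y)=-1$ is beside the point: once the limit is known to lie in $L^\C\cdot x_0\cap\Xi^+$, the slice characterization automatically places it in the open tube.) Replace this part of your argument by Rosenlicht's theorem and the proof is complete.
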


\smallskip
\begin{proof} By a result of Rosenlicht (\cite{Ros61}, Thm.\,2),  the orbits of the unipotent subgroup  $L^\C$ in the affine space $G^\C/K^\C$ are closed. In particular 
$L^\C\cdot x_0\cap \Xi^+$ is closed in $\Xi^+$. Now the statement follows from the injectivity of 
  map $\iota$ and the fact that 
  the set $ \{ \,X \in \Lambda_r \ : \ \exp iX \cdot x_0 \in \Xi^+ \, \}$ 
coincides with  
$\bigoplus_{j=1}^r(-1,\infty)E_j$ (see \cite{Kro08}, p. 286).
 \end{proof}

\bigskip
By Lemma 4.1 in  \cite{GeIa13}, the group 
$$W_K(\Lambda_r):=N_K(\Lambda_r)/Z_K(\Lambda_r)\,$$
is a proper subgroup of the Weyl group $N_K(\a)/Z_K(\a)$.
Its action on $\Lambda_r$ is described by the following result.


\bigskip
\begin{lemma}
\label{CENTRNORM}
$($\cite{GeIa13},  Lemma 4.1$)$
The group
$W_K(\Lambda_r)$ acts on  
$\Lambda_r$ by permutations of the basis  elements $\{E_1,\ldots,E_r\}$.
\end{lemma}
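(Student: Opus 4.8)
The plan is to prove the statement directly: I would show that for every $k \in N_K(\Lambda_r)$ the automorphism $\Ad(k)$ restricts on $\Lambda_r$ to a genuine permutation of the basis $\{E_1,\ldots,E_r\}$. Since $\Ad(k)$ acting trivially on $\Lambda_r$ means precisely $k \in Z_K(\Lambda_r)$, this immediately descends to the asserted action of $W_K(\Lambda_r)=N_K(\Lambda_r)/Z_K(\Lambda_r)$.

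First I would record that $N_K(\Lambda_r) \subseteq N_K(\a)$ (equivalently, that $W_K(\Lambda_r)$ embeds in the Weyl group $N_K(\a)/Z_K(\a)$, which is in any case part of Lemma 4.1 in \cite{GeIa13}). This is short: for $k \in K$ one has $\Ad(k)\theta = \theta\Ad(k)$, so if $\Ad(k)$ preserves $\Lambda_r$ it also preserves $\theta\Lambda_r$ and hence the bracket $[\theta\Lambda_r,\Lambda_r]$. By the commutation relations (\ref{RELATIONS2}) one has $[\theta E_j, E_k] = 0$ for $j\neq k$ and $[\theta E_j, E_j] = A_j$, and since $\{A_1,\ldots,A_r\}$ is a basis of $\a$, this bracket equals $\a$; therefore $\Ad(k)\a = \a$.

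Next, for $k \in N_K(\Lambda_r)$ I would combine two facts about $\Ad(k)$: it normalizes $\a$, hence carries each restricted root space onto another one, and it preserves $\Lambda_r \subseteq \bigoplus_{j}\g^{\lambda_j}$. Writing $\Ad(k)E_j \in \g^{\beta_j}$ for a suitable root $\beta_j$ and using that the root space decomposition is a direct sum, a nonzero vector of $\g^{\beta_j}$ lying in $\bigoplus_j \g^{\lambda_j}$ forces $\beta_j = \lambda_{\pi(j)}$ for some index; since the $\lambda_j$ are pairwise distinct, $\g^{\lambda_{\pi(j)}}\cap \Lambda_r = \R E_{\pi(j)}$, so $\Ad(k)E_j = c_j E_{\pi(j)}$ with $c_j \in \R\setminus\{0\}$, and $\pi$ is a permutation of $\{1,\ldots,r\}$ because $\Ad(k)|_{\Lambda_r}$ is invertible.

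The main point — and the step I expect to be the real obstacle — is to pin down $c_j = 1$, since up to here we only have a permutation of the $E_j$ up to rescaling. For this I would exploit the rigid normalization (\ref{TRIPLES}) of the $\mathfrak{sl}(2)$-triples relative to the central element $Z_0$. Applying $\Ad(k)$ to $A_j = [\theta E_j, E_j]$ and using $\theta$-equivariance gives $\Ad(k)A_j = c_j^2 A_{\pi(j)}$. Since $Z_0 \in Z(\k)$ and $K$ is connected, $\Ad(k)Z_0 = Z_0$; applying $\Ad(k)$ to the relation $[Z_0, A_j] = -(E_j - \theta E_j)$ then yields $c_j^2(E_{\pi(j)}-\theta E_{\pi(j)}) = c_j(E_{\pi(j)}-\theta E_{\pi(j)})$, and since $E_{\pi(j)}$ and $\theta E_{\pi(j)}$ lie in distinct root spaces the vector $E_{\pi(j)} - \theta E_{\pi(j)}$ is nonzero, forcing $c_j^2 = c_j$, i.e. $c_j = 1$. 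Hence $\Ad(k)E_j = E_{\pi(j)}$ for all $j$, and the lemma follows. One could equally well extract $c_j=1$ from the other relations in (\ref{TRIPLES}); the essential feature is that $Z_0$, being canonical in $Z(\k)$, is fixed by $\Ad(K)$, which is exactly what rigidifies the triples enough to kill the scalars.
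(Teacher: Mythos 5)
Your argument is correct, and there is nothing in the paper to compare it against: the paper does not prove Lemma \ref{CENTRNORM} at all, but simply cites Lemma 4.1 of \cite{GeIa13}. What you have produced is a self-contained verification using only the data recorded in Section 2, and each step checks out: since $\Ad(k)$ commutes with $\theta$ for $k\in K$, the identity $[\theta\Lambda_r,\Lambda_r]=\span_\R\{A_1,\dots,A_r\}=\a$ (from (\ref{TRIPLES}) and (\ref{RELATIONS2})) gives $N_K(\Lambda_r)\subseteq N_K(\a)$; normalizing $\a$ makes $\Ad(k)$ permute restricted root spaces, and since $\Lambda_r\cap\g^{\lambda_i}=\R E_i$ this forces $\Ad(k)E_j=c_jE_{\pi(j)}$ with $c_j\neq 0$ and $\pi$ a permutation; finally $K$ is connected, so $\Ad(k)$ fixes $Z_0\in Z(\k)$, and applying $\Ad(k)$ to $[Z_0,A_j]=-(E_j-\theta E_j)$ together with $\Ad(k)A_j=c_j^2A_{\pi(j)}$ gives $c_j^2=c_j$, hence $c_j=1$; the use of $Z_0$ to rigidify the normalization is exactly the right mechanism, and the same trick applied with $\pi=\mathrm{id}$ also justifies your parenthetical claim that $W_K(\Lambda_r)$ injects into $N_K(\a)/Z_K(\a)$. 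One caveat: your proof establishes precisely the quoted statement, namely that every element of $W_K(\Lambda_r)$ acts by a permutation of $\{E_1,\dots,E_r\}$, but not which permutations occur; the paper later relies (just before Lemma \ref{COMPONENTE}) on the stronger fact that the adjacent transpositions $\gamma_{kk+1}$ actually belong to and generate $W_K(\Lambda_r)$, and that existence statement is part of Lemma 4.1 of \cite{GeIa13} but is neither claimed nor delivered by your argument, so it would still have to be quoted from the reference.
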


\bigskip
One may expect  that the intersection of a $G$-orbit in $\Xi^+$ with the closed
slice $\exp (i \bigoplus_{j=1}^r (-1,+\infty)E_j)\cdot x_0$ is just  a 
 $W_K(\Lambda_r)$-orbit. However, as observed in \cite{GeIa13}, Remark 7.6 
this is not the case. 
Then, when studying the $G$-invariant geometry of $\Xi^+$, it is useful to consider
 a smaller slice  as follows. Consider  the $W_K(\Lambda_r)$-invariant, closed
hyperoctant 
$$\Lambda^{\llcorner}_r:=\span_{\R^{\ge 0}}\{E_1,\ldots,E_r\}\,$$
of $\Lambda_r$,
and  the nilpotent cone in $\g$ given by
$\mathcal N^+ := \Ad_K( \Lambda_r^\llcorner)$.  As suggested 
in  \cite{KrOp08} and \cite{Kro08}, Remark 4.12, the 
following fact  holds true.


\bigskip
\begin{prop}
\label{HOMEOMORPHISM}
$($\cite{GeIa13}, Prop.\,5.7$)$
The $G$-equivariant map 
$$\psi\colon G\times_K{\mathcal N}^+ \to \Xi^+,\qquad [g,X]\to g\exp iX\cdot x_0$$ is a homeomorphism.
\end{prop}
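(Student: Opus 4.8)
The plan is to verify in turn that $\psi$ is \emph{(1)} well defined, continuous and $G$-equivariant; \emph{(2)} surjective; \emph{(3)} injective; and \emph{(4)} proper, hence a homeomorphism. Step \emph{(1)} is immediate: $\mathcal N^+=\Ad_K(\Lambda_r^\llcorner)$ is $\Ad_K$-stable and $K\cdot x_0=x_0$, so $[gk,\Ad_{k^{-1}}X]\mapsto g\exp(iX)\cdot x_0$ is well posed, with continuity and equivariance clear; moreover $\mathcal N^+$ is closed in $\g$ ($K$ being compact), so $G\times_K\mathcal N^+$ is locally compact Hausdorff carrying a proper $G$-action (it is a homogeneous fibre bundle over $G/K$), while the $G$-action on $\Xi^+$ is proper by the very property characterising $\Xi^+$. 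For step \emph{(2)}, since $K\subset G$ fixes $x_0$ one has $\psi(G\times_K\mathcal N^+)=G\exp(i\Lambda_r^\llcorner)\cdot x_0$, so by the Kr\"otz--Opdam description $\Xi^+=G\exp\big(i\bigoplus_{j=1}^r(-1,\infty)E_j\big)\cdot x_0$ it suffices to move each $\exp(i\sum_j t_jE_j)\cdot x_0$ with $t_j>-1$ into $G\exp(i\Lambda_r^\llcorner)\cdot x_0$. By the commutation relations $(\ref{RELATIONS2})$ the $\s\l(2)$-triples $\g_j$ pairwise commute, hence $\exp(i\sum_j t_jE_j)=\prod_j\exp(it_jE_j)$ with $\exp(it_jE_j)\in G_j^\C$ and $G_j$ centralising $G_k^\C$ for $k\neq j$; this reduces the claim to the rank-one identity $\exp(itE_j)\cdot x_0\in G_j\exp(i|t|E_j)\cdot x_0$ for $t\in(-1,\infty)$, an elementary computation in $G_j^\C\cdot x_0\cong\mathrm{SL}(2,\C)/K_j^\C$ (take $g=e$ for $t\ge 0$; for $t\in(-1,0)$ a suitable element of $G_j$ reflects $t$ to $-t$). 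Feeding these identities back into the product and absorbing the resulting $K^\C$-factors, which fix $x_0$, yields surjectivity.

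For step \emph{(3)} one first uses the $\Ad_K$-action on the slice to reduce $\psi([g,X])=\psi([g',X'])$ to the case $X=Y$, $X'=Y'\in\Lambda_r^\llcorner$, so that $h\exp(iY)\cdot x_0=\exp(iY')\cdot x_0$ with $h=(g')^{-1}g\in G$. The two ingredients needed are: \emph{(i)} every $G$-orbit in $\Xi^+$ meets $\exp(i\Lambda_r^\llcorner)\cdot x_0$ in a single $W_K(\Lambda_r)$-orbit, the conjugating element being taken in $N_K(\Lambda_r)$; and \emph{(ii)} $\mathrm{Stab}_G(\exp(iY)\cdot x_0)=Z_K(Y)$. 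Granting these, \emph{(i)} provides $n\in N_K(\Lambda_r)$ representing some $w\in W_K(\Lambda_r)$ (a permutation of the $E_j$ by Lemma $\ref{CENTRNORM}$) with $Y'=\Ad_nY$ and $\exp(iY')\cdot x_0=n\exp(iY)\cdot x_0$; then $n^{-1}h\in Z_K(Y)$ by \emph{(ii)}, so $h=ns$ with $s\in Z_K(Y)$, whence $h\in K$, $\Ad_hY=\Ad_nY=Y'$, and $[g',Y']=[gh^{-1},Y']=[g,\Ad_{h^{-1}}Y']=[g,Y]$. Facts \emph{(i)} and \emph{(ii)} are established by the same mechanism as step \emph{(2)}: reduction to the commuting rank-one subgroups $G_j^\C$ (using $(\ref{RELATIONS2})$, and Lemma $\ref{COMMUTATIVITY}$ in the non-tube case), where the orbit picture is explicit, together with Lemma $\ref{CENTRNORM}$. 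It is crucial here to work with the hyperoctant $\Lambda_r^\llcorner$ rather than with the full slice $\bigoplus_j(-1,\infty)E_j$: for the latter, statement \emph{(i)} already fails, as noted in \cite{GeIa13}, Remark 7.6.

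Finally, for step \emph{(4)} it remains to upgrade the continuous equivariant bijection $\psi$ to a homeomorphism by checking it is proper. Given a compact $C\subset\Xi^+$, an element of $\psi^{-1}(C)$ can be written $[g,Y]$ with $Y\in\Lambda_r^\llcorner$ and $g\exp(iY)\cdot x_0\in C$. By Lemma $\ref{STEINTUBE}$ the point $\exp(iY)\cdot x_0$ lies on the closed tube submanifold $\Xi^+\cap L^\C\cdot x_0$ and escapes every compact subset of $\Xi^+$ as $|Y|\to\infty$; together with \emph{(i)} and the resulting continuous dependence of the orbit parameter in the finite quotient $\Lambda_r^\llcorner/W_K(\Lambda_r)$ on the orbit, this confines $Y$ to a compact subset $B\subset\Lambda_r^\llcorner$, so that $\exp(i\overline B)\cdot x_0$ is compact and properness of the $G$-action on $\Xi^+$ confines $g$ to a compact set; hence $\psi^{-1}(C)$ is contained in the image of a compact subset of $G\times_K\mathcal N^+$, hence compact. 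The genuinely substantial part of the argument—and the step I expect to be the main obstacle—is the orbit analysis of step \emph{(3)}, in particular statement \emph{(i)}, where one must exploit the positivity built into $\Lambda_r^\llcorner$ to cut down the reflections of the ambient Weyl group; all of this is carried out through the reduction to the commuting $\mathrm{SL}(2)$-factors described above, and is precisely the content of \cite{GeIa13}, Proposition 5.7.
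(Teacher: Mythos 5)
Note first that the paper does not actually prove this proposition: it is imported verbatim from \cite{GeIa13}, Prop.\,5.7, so your argument has to stand entirely on its own. Your steps (1) and (2) do stand: well-definedness, equivariance and continuity are routine, and the surjectivity argument --- factoring $\exp(i\sum_j t_jE_j)$ through the pairwise commuting $\s\l(2)$-triples, reflecting each $t_j\in(-1,0)$ to $|t_j|$ by an element of $G_j$, and absorbing the resulting $K^\C$-factors which fix $x_0$ --- is correct.

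The genuine gap is in steps (3) and (4). Facts (i) and (ii) of your step (3) --- that a $G$-orbit meets $\exp i\Lambda_r^{\llcorner}\cdot x_0$ in exactly one $W_K(\Lambda_r)$-orbit, and that $G_{\exp iY\cdot x_0}=Z_K(Y)$ --- are the entire substance of the proposition, and they are not obtained by ``the same mechanism as step (2)''. The rank-one reduction only produces \emph{particular} elements of the commuting subgroups $G_j$ (or $G_j^\bullet$), whereas injectivity and the stabilizer identity require controlling an \emph{arbitrary} $h\in G$ with $h\exp (iY)\cdot x_0=\exp (iY')\cdot x_0$, equivalently the intersection $G\cap \exp(iY)K^\C\exp(-iY)$; such elements need not lie in $\prod_j G_j$, and the root spaces $\g^{\pm e_j\pm e_k}$ (and $\g^{\pm e_j}$ in the non-tube case) are invisible to Lemma \ref{COMMUTATIVITY} and Lemma \ref{CENTRNORM}. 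Note moreover that in the present paper both facts are stated as \emph{consequences} of Proposition \ref{HOMEOMORPHISM} (the remark following Lemma \ref{WORBIT}, and the identity $Z_K(X)=G_{\exp iX\cdot x_0}$ invoked in the proof of Lemma \ref{EXTENSION}), and Lemma \ref{WORBIT} itself only governs the $\Ad_K$-orbit, not the $G$-orbit; so taking (i) and (ii) as inputs essentially assumes what is to be proven --- indeed your closing sentence defers them to \cite{GeIa13}, Prop.\,5.7, i.e.\ to the statement itself. Step (4) has the same circularity: the ``continuous dependence of the orbit parameter in $\Lambda_r^{\llcorner}/W_K(\Lambda_r)$'' used to confine $Y$ to a compact set is precisely the continuity of $\psi^{-1}$ you are trying to establish, and closedness of $\Xi^+\cap L^\C\cdot x_0$ (Lemma \ref{STEINTUBE}) does not give it, since an orbit may meet the compact set $C$ far from its slice point. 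A self-contained proof needs a global ingredient that sees all of $G$ at once --- for instance the $G$-equivariant projection of $\Xi^+$ to $G/K$ coming from its realization inside $G^\C/Q\times\overline{G^\C/Q}$, which reduces both (i) and (ii) to a $K$-equivariant analysis of the fibre over $eQ$ --- and nothing of that sort appears in your plan.
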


\bigskip
Given  a $G$-invariant domain  $D\subset \Xi^+$, define an 
open subset of $\bigoplus_{j=1}^r(-1,\infty)E_j$ by
$${\mathcal D} := \{ \,X \in \Lambda_r \ : \ \exp iX \cdot x_0 \in D \, \} .$$
 By the definition of $\mathcal D$ and  Proposition \ref{HOMEOMORPHISM}, the domain 
$D$ 
 can  be written as 
$$D=G\exp i{\mathcal D}\cdot x_0=G\exp i{\mathcal D^\llcorner}\cdot x_0,$$ where 
$
{\mathcal D^\llcorner}:= {\mathcal D }\cap  \Lambda_r^{\llcorner} 
$
is a   $W_K(\Lambda_r)$-invariant open subset of $\Lambda_r^{\llcorner}$.
 

\bigskip
\begin{lemma}
\label{WORBIT} $($\cite{GeIa13}, Lemma 7.4$)$
Let $X$ be an element in $\Lambda_r^{\llcorner}$. Then
the $\Ad_K$-orbit of $X$ intersects  $\Lambda_r$ 
in the $W_K(\Lambda_r)$-orbit of $X$ in $ \Lambda_r^{\llcorner}$.
\end{lemma}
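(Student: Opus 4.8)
The plan is to prove the non-obvious inclusion: if $X, X' \in \Lambda_r^\llcorner$ satisfy $X' = \Ad_k(X)$ for some $k \in K$, then $X'$ lies in the $W_K(\Lambda_r)$-orbit of $X$. The reverse inclusion is immediate, since $W_K(\Lambda_r) = N_K(\Lambda_r)/Z_K(\Lambda_r)$ acts on $\Lambda_r^\llcorner$ by restriction of $\Ad_K$. So fix $k \in K$ with $\Ad_k(X) = X' \in \Lambda_r^\llcorner$. The first step is to reduce to a statement about the abelian subspace $\a \subset \p$ via the Cayley-type transform built from the $\s\l(2)$-triples $\{E_j, \theta E_j, A_j\}$. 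Concretely, I would use the element $Z_0 \in Z(\k)$ and the relations~(\ref{TRIPLES}): conjugation by $\exp(\frac{\pi}{4}\,\ad_{Z_0})$ (a Cayley element, well defined since the triples commute by~(\ref{RELATIONS2})) carries each $E_j$ to a multiple of $A_j$, hence carries $\Lambda_r$ isomorphically onto $\a$ and $\Lambda_r^\llcorner$ onto a closed Weyl chamber (or a union of chambers) $\a^\llcorner$ in $\a$. Because this Cayley element normalizes $K$ and intertwines the $\Ad_K$-action on $\Lambda_r$ with the $\Ad_{K'}$-action on $\a$ for the appropriately transformed maximal compact $K'$, the statement becomes: the $\Ad_{K'}$-orbit of a point of the closed chamber $\a^\llcorner$ meets $\a$ exactly in the orbit under $N_{K'}(\a)/Z_{K'}(\a)$, intersected with $\a^\llcorner$.

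The second step is then the classical fact underlying Kostant's convexity theorem and the polar decomposition of a symmetric space: two elements of a maximal abelian subspace $\a \subset \p$ that are $\Ad_K$-conjugate are conjugate under the full Weyl group $N_K(\a)/Z_K(\a)$. This is standard (see, e.g., Helgason), and gives $X' = \Ad_n(X)$ for some $n$ representing a full Weyl group element $w$. The remaining task is to show that, because both $X$ and $X' = w\cdot X$ lie in $\Lambda_r^\llcorner$, the element $w$ can in fact be chosen in the subgroup $W_K(\Lambda_r)$. Here I would invoke Lemma~\ref{CENTRNORM}: $W_K(\Lambda_r)$ acts on $\Lambda_r$ (equivalently, on $\a$ after the Cayley transform) by permutations of the basis $\{E_1, \dots, E_r\}$, i.e., it is the subgroup $S_r$ of the full Weyl group of type $C_r$ (or $BC_r$). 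The full Weyl group is the signed-permutation group; the hyperoctant $\Lambda_r^\llcorner$ is a fundamental domain for the sign changes. Thus if $X \in \Lambda_r^\llcorner$ and $w\cdot X \in \Lambda_r^\llcorner$, writing $w$ as a signed permutation $w = \varepsilon \circ p$ with $p \in S_r$ and $\varepsilon$ a sign change, the condition $w \cdot X \in \Lambda_r^\llcorner$ forces $\varepsilon$ to act trivially on the support of $p\cdot X$, so one may replace $w$ by $p \in S_r = W_K(\Lambda_r)$ without changing $w\cdot X$.

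I expect the main obstacle to be making the reduction in the first step fully rigorous: one must check carefully that the Cayley transform $\exp(\frac{\pi}{4}\ad_{Z_0})$ conjugates $\Lambda_r$ onto $\a$ (using~(\ref{TRIPLES})), that it carries the hyperoctant onto a region which is a fundamental domain for the sign-change subgroup of the Weyl group, and that it is compatible with the $K$-action in the precise sense needed so that the permutation subgroup $W_K(\Lambda_r)$ of Lemma~\ref{CENTRNORM} matches the permutation subgroup $S_r$ of the full $C_r$/$BC_r$ Weyl group. A cleaner route, which is likely what the authors take, is to avoid an explicit Cayley transform and instead argue directly inside $\Xi^+$: use Proposition~\ref{HOMEOMORPHISM}, that $\psi\colon G\times_K \mathcal N^+ \to \Xi^+$ is a homeomorphism, to translate "$\Ad_K$-orbit of $X$ meets $\Lambda_r$" into a statement about which points of $\exp i\bigoplus_j(-1,\infty)E_j\cdot x_0$ lie in the same $G$-orbit, and then combine Lemma~\ref{CENTRNORM} with the description of $G$-orbits in $\Xi^+$ to conclude. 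Either way, once the reduction is in place the combinatorics of signed permutations versus permutations is routine.
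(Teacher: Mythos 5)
Your main route breaks down at the very first step, and the failure is structural, not just computational. The relation (\ref{TRIPLES}) gives $\ad_{Z_0}(E_j+\theta E_j)=0$ (since $Z_0\in Z(\k)$ kills all of $\k$), so for every $t$ one has $\exp(t\,\ad_{Z_0})E_j=\tfrac12(E_j+\theta E_j)+\tfrac12\cos t\,(E_j-\theta E_j)+\tfrac12\sin t\,A_j$, which always retains the nonzero $\k$-component $\tfrac12(E_j+\theta E_j)$ and is never a multiple of $A_j$. More fundamentally, $\exp(t\,\ad_{Z_0})=\Ad_{\exp tZ_0}$ is an automorphism of $\g$, and no automorphism of $\g$ can carry the nilpotent element $E_j$ to the semisimple element $A_j$; the Cayley transform that does relate $\Lambda_r$ to $\a$ is conjugation by an element of $G^\C$ (essentially $\exp\tfrac{i\pi}{4}\sum_j(E_j+\theta E_j)$), which neither preserves $\g$ nor normalizes $K$. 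In fact no $K$-equivariant identification of the kind you posit can exist: $-A_j$ is $\Ad_K$-conjugate to $A_j$ (sign changes lie in the Weyl group of type $C_r/BC_r$), whereas $-E_j$ is \emph{not} $\Ad_K$-conjugate to $E_j$, as one sees by pairing with the $\Ad_K$-fixed vector $Z_0$: $B(E_j,Z_0)=\tfrac12 B(E_j,\theta E_j)\neq0$ is an $\Ad_K$-invariant which changes sign. This inequivalence is exactly the point of the lemma (the statement is about $\Ad_K(X)\cap\Lambda_r$, not merely $\cap\,\Lambda_r^{\llcorner}$, so excluding sign changes is part of the content, not an afterthought), and your step 2, which imports the full signed-permutation Weyl group from $\a$, would if anything contradict it rather than prove it.

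Your fallback suggestion is also not available as stated: within this paper, the description of how a $G$-orbit meets the slice $\exp i\Lambda_r^{\llcorner}\cdot x_0$ is \emph{deduced} from this lemma together with Proposition \ref{HOMEOMORPHISM}, and Proposition \ref{HOMEOMORPHISM} by itself only identifies $G$-orbits in $\Xi^+$ with $\Ad_K$-orbits in $\mathcal N^+$, giving no control over how such an orbit meets $\Lambda_r$; using the orbit description would be circular. (For the record, the paper offers no proof at all here: the lemma is imported from \cite{GeIa13}, Lemma 7.4, so there is no in-paper argument to compare with.) A workable repair of your idea is to replace the Cayley conjugation by the $K$-equivariant \emph{linear} map $X\mapsto[Z_0,X-\theta X]$ already used in Lemma \ref{COMPONENTS}: it sends $E_j\mapsto A_j$ and commutes with $\Ad_K$ because $Z_0$ is $K$-fixed, so $\Ad_kX=X'\in\Lambda_r$ forces the coordinate vectors to be related by a signed permutation (classical Weyl-group conjugacy in $\a$); the invariance of $B(\cdot,Z_0)$ then rules out nontrivial sign changes, and Lemma \ref{CENTRNORM} identifies the remaining permutations with $W_K(\Lambda_r)$, which is the conclusion you were aiming for.
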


\bn
Note that the above result  together with  Proposition \ref{HOMEOMORPHISM} implies that given $X$ in~$\Lambda_r^{\llcorner}$, one has 
$$G \exp i X\cdot x_0 \bigcap \exp i\Lambda_r^{\llcorner} \cdot x_0=
\exp i( W_K(\Lambda_r) \cdot X)  \cdot x_0,$$ 
i.e. every $G$-orbit (not just $ K$-orbit) in $\Xi^+$ intersects 
 the closed slice $\exp i\Lambda^{\llcorner}_r \cdot x_0$
 exactly in a $W_K(\Lambda_r)$-orbit.

Consider the open Weyl chamber
$ (\Lambda_r^{\llcorner})^+ :=
\big \{\sum_{j=1}^rx_jE_j \ : \  
x_1 >  \dots > x_r >0  \big \}$.
By Lemma \ref{CENTRNORM}, its topological closure 
$$\overline{(\Lambda_r^{\llcorner})^+} =
\Big \{ \sum_{j=1}^rx_jE_j,~:~ x_1 \ge  \dots \ge x_r \ge 0
\Big \}\, 
$$
is a perfect slice for the $W_K(\Lambda_r)$-action on $\Lambda_r^{\llcorner}$, 
implying that  $\exp i\overline{(\Lambda_r^{\llcorner})^+} \cdot x_0$ is a perfect slice for the $G$-action on $\Xi^+$. 
It follows that for a $G$-invariant domain $D$ of $ \Xi^+$ one also has
\begin{equation} \label{DPERFECT} D=G\exp i({\mathcal D}^\llcorner)^+\cdot x_0,
\end{equation}  where  the subset 
$ ({\mathcal D}^\llcorner)^+:={\mathcal D}^\llcorner\cap\overline{(\Lambda_r^{\llcorner})^+} $ is
 open in $\overline{(\Lambda_r^{\llcorner})^+}$. 
In particular,
  $({\mathcal D^\llcorner})^+$ is connected if and only if $D$ is connected.

\nbigskip
In the sequel we also need the following fact.


\bigskip
\begin{lemma}
\label{COMPONENTS}
Let $X$ be an element  in $\Lambda_r^{\llcorner}$. Then every connected component of $Z_K(X)$ meets
$ Z_K(\Lambda_r)$.
\end{lemma}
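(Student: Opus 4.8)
The plan is to prove the equivalent statement $Z_K(X)=Z_K(X)^\circ\cdot Z_K(\Lambda_r)$: since $X=\sum_j x_jE_j$ lies in $\Lambda_r$, any element of $K$ centralizing every $E_j$ centralizes $X$, so $Z_K(\Lambda_r)\subseteq Z_K(X)$, and then a component of $Z_K(X)$ meets $Z_K(\Lambda_r)$ exactly when it is contained in $Z_K(X)^\circ\cdot Z_K(\Lambda_r)$. First I would reduce to a normal form: any $n\in N_K(\Lambda_r)$ carries $Z_K(\Lambda_r)$ onto itself and $Z_K(X),Z_K(X)^\circ$ onto $Z_K(\Ad(n)X),Z_K(\Ad(n)X)^\circ$, so the claim for $X$ is equivalent to the claim for $\Ad(n)X$; since $\overline{(\Lambda_r^{\llcorner})^+}$ is a perfect slice for the $W_K(\Lambda_r)$-action on $\Lambda_r^{\llcorner}$ (Lemma~\ref{CENTRNORM}), we may assume $x_1\ge\cdots\ge x_r\ge 0$. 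Write $P=\{j:x_j>0\}$ and let $B_1,\dots,B_s$ be the blocks of $P$ on which $j\mapsto x_j$ is constant.

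Next I would absorb the indices $j\notin P$ into the identity component. For such $j$ the summand $E_j$ is absent from $X$; since the triples $\g_k$ commute by~(\ref{RELATIONS2}) and, in the non-tube case, $G_j^\bullet$ commutes with $G_k$ for $k\ne j$ by Lemma~\ref{COMMUTATIVITY}, the $\theta$-stable connected subgroup $G_j$ centralizes $X$, and $G_j\cap K$ is connected; hence $G_j\cap K\subseteq Z_K(X)^\circ$. In particular the element $w_j:=\exp(\tfrac{\pi}{2}T_j)$, which by~(\ref{TRIPLES}) interchanges $E_j$ and $\theta E_j$ under $\Ad$ and acts trivially on $\g_k$ for $k\ne j$, together with its square $m_j:=\exp(\pi T_j)$, lies in $Z_K(X)^\circ$ whenever $j\notin P$. (Note that for every $j$ one has $m_j\in Z_K(\Lambda_r)$: being central in $G_j$, $m_j$ acts trivially on $\g_j$, so $\Ad(m_j)E_j=E_j$, and it centralizes $\g_k$ for $k\ne j$, so $\Ad(m_j)E_k=E_k$.)

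The remaining, and main, task is to establish, for $X$ in the chamber $\overline{(\Lambda_r^{\llcorner})^+}$, the two facts: \emph{(a)} every connected component of $Z_K(X)$ meets $N_K(\Lambda_r)$; and \emph{(b)} for $i,j$ lying in one block $B_k$, a representative $\tau_{ij}\in N_K(\Lambda_r)\cap Z_K(X)$ of the transposition $(i\,j)$ satisfies $\tau_{ij}\,m_j^{\varepsilon}\in Z_K(X)^\circ$ for a suitable $\varepsilon\in\{0,1\}$. Granting these, the proof concludes quickly: by Lemma~\ref{CENTRNORM} the group $N_K(\Lambda_r)\cap Z_K(X)$ surjects, with kernel $Z_K(\Lambda_r)$, onto the stabilizer $W_X\le W_K(\Lambda_r)$ of $X$, namely the permutations preserving each $B_k$; as $W_X$ is generated by transpositions within single blocks, part \emph{(b)} and the last observation of the previous paragraph give $N_K(\Lambda_r)\cap Z_K(X)\subseteq Z_K(X)^\circ\cdot Z_K(\Lambda_r)$, and combining with \emph{(a)} yields $Z_K(X)=Z_K(X)^\circ\cdot Z_K(\Lambda_r)$. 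For \emph{(a)} the point is that $\Lambda_r^{\llcorner}$ is, for the $K$-action on the nilpotent cone $\mathcal N^+=\Ad_K(\Lambda_r^{\llcorner})$ (Proposition~\ref{HOMEOMORPHISM}, Lemma~\ref{WORBIT}), the analogue of a Cartan subspace for an isotropy representation: one grades $\g$ by the semisimple element $A_P:=\sum_{j\in P}A_j\in\a$, which by~(\ref{TRIPLES})--(\ref{RELATIONS2}) satisfies $[A_P,X]=2X$, and analyses $Z_\k(X)$ through this grading and the commuting triples $\g_j$ (and the $G_j^\bullet$ and Lemma~\ref{COMMUTATIVITY} in the non-tube case). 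For \emph{(b)} one works inside the subgroup generated by $G_i$, $G_j$ and $\tau_{ij}$, the choice of $\varepsilon$ and the verification that $\tau_{ij}m_j^{\varepsilon}$ is joined to the identity in $Z_K(X)$ being an explicit computation governed by the normalization~(\ref{TRIPLES})--(\ref{CENTER}).

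I expect step \emph{(a)} to be the principal obstacle: pinning down $Z_\k(X)$ finely enough to exclude components of $Z_K(X)$ not already visible in $N_K(\Lambda_r)$ requires tracking the compact-root directions that link the several $\s\l(2)$-corners $\g_j$, because the restricted root spaces $\g^{\lambda_j}$ containing the $E_j$ are not $\Ad_K$-invariant. The sign bookkeeping in \emph{(b)} is delicate but routine once \emph{(a)} is in place.
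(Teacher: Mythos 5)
Your argument is not a proof but a reduction of the lemma to two statements, (a) and (b), which you leave unproven; and (a) --- that every connected component of $Z_K(X)$ meets $N_K(\Lambda_r)$ --- is essentially the whole content of the lemma. The preliminary parts are fine (the equivalence with $Z_K(X)=Z_K(X)^\circ\cdot Z_K(\Lambda_r)$, the reduction to $X\in\overline{(\Lambda_r^{\llcorner})^+}$ via $N_K(\Lambda_r)$, the observation that $G_j\cap K\subseteq Z_K(X)^\circ$ for $j\notin P$ and that $m_j\in Z_K(\Lambda_r)$), but for (a) you only say that one should grade $\g$ by $A_P$ and ``analyse $Z_\k(X)$ through this grading'', and you yourself flag this as the principal obstacle: nothing in the proposal excludes a component of $Z_K(X)$ that misses $N_K(\Lambda_r)$. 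Likewise (b) is deferred to ``an explicit computation'' that is not carried out. There is also a smaller slip in the wrap-up: the stabilizer $W_X$ is not generated only by transpositions inside the blocks $B_k\subseteq P$; it also contains all permutations of the indices with $x_j=0$, so even granting (a) and (b) as stated you would still need a further argument (e.g.\ via the rank-two subgroup attached to the roots $\pm e_i\pm e_j$, $\pm 2e_i$, $\pm 2e_j$ for $i,j\notin P$) to put those transposition representatives into $Z_K(X)^\circ\cdot Z_K(\Lambda_r)$.

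The missing idea that makes the lemma short in the paper is to leave the nilpotent picture altogether: by Lemmas 4.1(i) and 5.6 of \cite{GeIa13} one has $Z_K(\Lambda_r)\cong Z_K(\a)$ and $Z_K(X)\cong Z_K(\Psi(X))$, where $\Psi(X)=[Z_0,X-\theta X]\in\a$, so the statement reduces to: for $H\in\a$, every component of $Z_K(H)$ meets $Z_K(\a)$. That is standard structure theory: $Z_G(H)$ is a $\theta$-stable reductive subgroup with maximal compact subgroup $Z_K(H)$ and with $\a$ maximal abelian in $Z_\p(H)$, and Proposition 7.33 of \cite{Kna04} says precisely that $Z_{Z_K(H)}(\a)=Z_K(\a)$ meets every component of $Z_K(H)$. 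Your graded/$\s\l(2)$ analysis of $Z_\k(X)$ would in effect be re-proving this $M$-meets-every-component theorem by hand in the nilpotent setting; if you want to keep your route, the honest task is to supply full proofs of (a) and (b), and (a) in particular requires controlling the compact-root directions linking the corners $\g_j$, which your sketch does not do.
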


\smallskip
\begin{proof}
Let  $X$ be an arbitrary element in $\Lambda_r^{\llcorner}$.   By (i) of
Lemma 4.1  and Lemma  5.6 in \cite{GeIa13}, one has
$$ Z_K(\Lambda_r)\cong Z_K(\a) \quad\hbox{and}\quad Z_K(X)\cong Z_K(\Psi(X)),$$
where $\Psi(X)=[Z_0,X-\theta X]\in \a$.
Thus in order to prove the lemma, it is sufficient to show that for  an arbitrary element $H\in\a$,
every  connected component  of $Z_K(H)$ meets~$ Z_K(\a)$.

The centralizer $Z_G(H)$ is a $\theta$-stable reductive subgroup of $G$ (see \cite{Kna04}, Prop.\,7.25, p.\,452) of the same rank and real rank as $G$, with maximal compact subgroup $Z_K(H)$. The maximal abelian subspace of 
$Z_\p(H)$ is $\a$ and, as $Z_K(\a)$ is contained in $Z_K(H)$, one has  that
$Z_{Z_K(H)}(\a)=Z_K(\a)$.
Now   Proposition 7.33 in \cite{Kna04}, p.\,457,  applied to the reductive group $Z_G(H)$, states  that
$Z_K(\a)$ meets every connected component of $Z_K(H)$, as desired.
\end{proof}

\bigskip

In \cite{GeIa13}  it was shown that if $G/K$ is of tube type,  then $\Xi^+$  contains another distinguished 
Stein  invariant  domain,  besides the crown $\Xi$. Such domain $S^+$ arises from the compactly causal structure of a pseudo-Riemannian symmetric  $G$-orbit in the 
 boundary of $\Xi$. The domain $S^+$ and its invariant subdomains  were investigated in \cite{Nee99}.
In the unipotent parametrization of $\Xi^+$,  the domains $\Xi$ and  $S^+$ are given as follows (see \cite{KrOp08}, Sect.\,8, \cite{GeIa13}, Prop. 8.7).


 \bn
 \begin{prop}
\label{SOTTODOMINI}
Let $G/K$ be an irreducible  Hermitian symmetric space.   
Inside  $\Xi^+$   the crown  domain $\Xi$ is given by
$$ G \exp i\bigoplus_{j=1}^r[0,1)E_j\cdot x_0\,.$$
If $G/K$ is of tube type,  the  domain  $S^+$ is given by   
$$ G \exp i\bigoplus_{j=1}^r(1,\infty)E_j \cdot x_0 .$$
 \end{prop}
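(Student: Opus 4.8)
The goal is to identify, inside the unipotent slice, the precise subsets corresponding to the crown $\Xi$ and (in the tube case) to $S^+$. The natural strategy is to exploit the already-established fact that every $G$-orbit in $\Xi^+$ meets the closed slice $\exp i\Lambda_r^{\llcorner}\cdot x_0$ exactly in a $W_K(\Lambda_r)$-orbit (the remark after Lemma~\ref{WORBIT}), so that a $G$-invariant domain $D$ is completely determined by the $W_K(\Lambda_r)$-invariant set $\mathcal D^{\llcorner}=\{\,\sum x_jE_j : x_j>-1,\ \exp i\sum x_jE_j\cdot x_0\in D\,\}\cap\Lambda_r^{\llcorner}$. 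Thus it suffices to check, for the specific domains $\Xi$ and $S^+$, which points $\exp iX\cdot x_0$ with $X=\sum x_jE_j\in\Lambda_r^{\llcorner}$ lie in them. By $W_K(\Lambda_r)$-invariance (permutations of the $E_j$) one may further restrict $X$ to the fundamental chamber $x_1\ge\cdots\ge x_r\ge 0$; the asserted formulas will then follow by symmetrizing.

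First I would treat the crown $\Xi$. The one-variable reduction is the key: for each $j$ the $\mathfrak{sl}(2)$-triple $\{E_j,\theta E_j,A_j\}$ normalized in~(\ref{TRIPLES}) generates a subgroup $G_j$, and by the commutativity relations~(\ref{RELATIONS2}) the product $\prod_j G_j$ acts with the factors commuting; restricting the ambient space to the orbit of $\prod_j G_j\cdot x_0$ reduces the computation to the rank-one situation $G_j/K_j=SU(1,1)/U(1)$. In rank one the crown $\Xi$ is the well-known strip, and a direct $SU(1,1)$ computation (or the known description in \cite{AkGi90},\cite{KrSt04}) shows $\exp i t E_j\cdot x_0\in\Xi$ if and only if $t\in[0,1)$ — here $t=1$ is exactly the boundary point giving the distinguished $G_j$-orbit sitting in $\partial\Xi$. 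Assembling the $r$ commuting copies, one obtains that within the slice $\Xi$ corresponds to $\bigoplus_j[0,1)E_j$; applying $W_K(\Lambda_r)$ and Proposition~\ref{HOMEOMORPHISM} gives $\Xi=G\exp i\bigoplus_j[0,1)E_j\cdot x_0$. (Alternatively, one can invoke the characterization of $\Xi$ via properness plus the polar description already recorded for $\Xi^+$, intersecting the half-line $(-1,\infty)$ with the crown's rank-one interval.)

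For $S^+$ in the tube case I would argue similarly but using the description of $S^+$ from \cite{Nee99} and \cite{GeIa13}, Prop.~8.7: $S^+$ arises from the compactly causal symmetric $G$-orbit $G/H$ in $\partial\Xi$, and in the rank-one tube factor $SU(1,1)/U(1)$ the corresponding Neeb domain is precisely the complementary strip, i.e. $\exp itE_j\cdot x_0\in S^+$ iff $t\in(1,\infty)$. Again the commuting $\mathfrak{sl}(2)$-triples let one build up from rank one: the point $\exp i\sum x_jE_j\cdot x_0$ lies in $S^+$ exactly when every $x_j\in(1,\infty)$, and since this set is already $W_K(\Lambda_r)$-invariant no symmetrization is needed. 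Hence $S^+=G\exp i\bigoplus_j(1,\infty)E_j\cdot x_0$.

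The main obstacle I anticipate is the rank-one base case — pinning down *exactly* which values of the real parameter $t$ put $\exp itE_j\cdot x_0$ into $\Xi$ versus $S^+$, with the correct normalization so that the cutoff is $t=1$ rather than some other constant. This requires that the normalization~(\ref{TRIPLES}) of the triples be matched against the normalization implicit in the definitions of $\Xi$ (via the Akhiezer–Gindikin $\pi/4$-condition on $\mathfrak a$, transported through $Z_0$) and of $S^+$ (via Neeb's construction); the compatibility $Z_0=S+\tfrac12\sum T_j$ from~(\ref{CENTER}) is what makes the bookkeeping work, with $S=0$ in the tube case ensuring the $G_j$-factors genuinely decouple and the $S^+$ statement makes sense. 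Once the rank-one endpoints are correctly placed at $0$ and $1$, the passage to general rank is purely formal via the commuting-triples decomposition and Proposition~\ref{HOMEOMORPHISM}.
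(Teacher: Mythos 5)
The paper does not actually prove Proposition \ref{SOTTODOMINI}: it imports it from \cite{KrOp08}, Sect.~8, and \cite{GeIa13}, Prop.~8.7, and your overall strategy (rank-one reduction along the strongly orthogonal roots via the commuting $\mathfrak{sl}(2)$-triples, then $G$-saturation using Proposition \ref{HOMEOMORPHISM} and Lemma \ref{WORBIT}) is indeed the way those sources proceed. For the crown your sketch is essentially sound: once the rank-one conversion between the semisimple parameter (the $\pi/4$-bound on the $A_j$-coordinate) and the unipotent parameter $t\in[0,1)$ is carried out explicitly in $SL(2)$, the relations (\ref{RELATIONS2}) let you convert coordinate by coordinate in both directions, which yields the two inclusions. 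Note, however, that this $SL(2)$ computation --- which you yourself single out as the crux --- is only asserted, not performed, and the references you give for it (\cite{AkGi90}, \cite{KrSt04}) supply the polar description of $\Xi$ rather than the unipotent one; the computation you need is the one in \cite{KrOp08}, Sect.~8.

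The genuine gap is in the $S^+$ half. As written, the argument is circular: the ``description of $S^+$'' you invoke from \cite{GeIa13}, Prop.~8.7 is precisely the identity $S^+=G\exp i\bigoplus_{j=1}^r(1,\infty)E_j\cdot x_0$ that is to be proven. If instead you start from Neeb's definition of $S^+$ (the domain swept out by $\exp i$ of an invariant cone attached to the compactly causal symmetric orbit $G/H$ in $\partial\Xi$), then the passage to higher rank is not ``purely formal'': besides the rank-one endpoint identification (now a change of base point from $x_0$ to the boundary point of the rank-one crown, again not carried out), you need (a) that every $G$-orbit in $S^+$ can be conjugated into the abelian slice of the cone --- a conjugation/convexity statement for invariant cones which does not follow from the commuting triples alone --- and (b) the explicit relation between that semisimple slice through the boundary base point and the unipotent slice $\exp i\Lambda_r^\llcorner\cdot x_0$ through $x_0$. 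Items (a) and (b) are exactly the content of the cited Prop.~8.7 of \cite{GeIa13}, so for the $S^+$ statement your proposal presupposes the result rather than proving it.
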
   
\bn

\section{Envelopes of holomorphy of invariant domains in $\Xi^+$.} 

In this section we prove 
 some preliminary results 
 supporting  the three basic ingredients of the proof of the main theorem, namely reduction 1,   reduction 2 and rank-one reduction.
A key result is given by Proposition 4.7,
under whose assumptions one obtains 
 $\,G$-equivariant, holomorphic extensions of the embedding~$\,f\colon D\to \widehat D$ to  larger invariant domains  containing $D$.
   
We begin by recalling some general facts about envelopes of holomorphy.
Let $X$ be a Stein manifold and let $D$ be a domain in $X$. By  Rossi's results  \cite{Ros63}, $D$ admits an envelope of holomorphy $\widehat D$.  This means that  there  exist  an open holomorphic embedding  $f\colon D\to \widehat D$ into a Stein manifold $\widehat D$ to which all holomorphic functions on $D$ simultaneously extend. Moreover,  there is a local biholomorphism $q$ such that the   diagram  

\begin{equation} \label{ENVELOPE}
\xymatrix{&\widehat D\ar[d]^q\\
 D\ar[ur]^f \ar[r]^{Id}& X } 
  \end{equation}
 
 \bn
commutes.  

\begin{prop}
\label{UNIVERSALITY} Let $D_1 $ and $D_2 $ be complex  manifolds,  with  envelopes  of holomorphy $f_1\colon D_1\to  \widehat D_1 $ and $f_2\colon D_2\to \widehat D_2 $, respectively. Let $F\colon D_1\to D_2$ be a holomorphic map. Then there exists a unique holomorphic map $\widehat F \colon \widehat D_1\to \widehat D_2$ such that $\widehat F\circ f_1=f_2\circ  F$.
\end{prop}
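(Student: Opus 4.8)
The statement is the standard universal property of the envelope of holomorphy, and the plan is to derive it from the one feature of Rossi's construction that is explicitly recorded above, namely that \emph{every} holomorphic function on $D_1$ extends to $\widehat D_1$, together with the embedding theorem for Stein manifolds and Cartan's theorems. Since $\widehat D_2$ is Stein, fix a proper holomorphic embedding $\iota\colon\widehat D_2\hookrightarrow\C^N$ onto a closed complex submanifold of $\C^N$. Then $\iota\circ f_2\circ F\colon D_1\to\C^N$ is an $N$-tuple of holomorphic functions on $D_1$. Extending each of these components to $\widehat D_1$ yields a holomorphic map $H\colon\widehat D_1\to\C^N$ with $H\circ f_1=\iota\circ f_2\circ F$.

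The key point, and the only place where an argument is really needed, is to check that $H(\widehat D_1)\subseteq\iota(\widehat D_2)$. By Cartan's Theorem A applied to the ideal sheaf of the closed complex submanifold $\iota(\widehat D_2)\subset\C^N$, there is a family $\{h_\alpha\}$ of holomorphic functions on $\C^N$ with $\iota(\widehat D_2)=\bigcap_\alpha h_\alpha^{-1}(0)$. For every $\alpha$ the holomorphic function $h_\alpha\circ H$ on $\widehat D_1$ vanishes on $f_1(D_1)$, since $h_\alpha\circ H\circ f_1=h_\alpha\circ\iota\circ f_2\circ F\equiv 0$. As $f_1$ is an open embedding, $f_1(D_1)$ is a nonempty open subset of $\widehat D_1$ meeting every connected component (arguing componentwise on $D_1$ one may in fact assume $\widehat D_1$ connected), so the identity theorem forces $h_\alpha\circ H\equiv 0$ for all $\alpha$, whence $H(\widehat D_1)\subseteq\iota(\widehat D_2)$.

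Since $\iota$ is a biholomorphism onto its image, one may then set $\widehat F:=\iota^{-1}\circ H\colon\widehat D_1\to\widehat D_2$, which is holomorphic and satisfies $\widehat F\circ f_1=\iota^{-1}\circ\iota\circ f_2\circ F=f_2\circ F$. Uniqueness is immediate: two holomorphic maps $\widehat D_1\to\widehat D_2$ with this property agree on the open set $f_1(D_1)$, hence agree everywhere by the identity theorem (for instance after composing with the coordinate functions of $\iota$). The only mildly delicate step is the containment $H(\widehat D_1)\subseteq\iota(\widehat D_2)$; everything else is formal. Alternatively one could appeal directly to the minimality/universality built into Rossi's construction, but the argument above has the merit of using nothing beyond the extension-of-functions property quoted above and standard Stein theory.
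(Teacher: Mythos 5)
Your argument is correct, and it is worth noting that the paper itself offers no proof of this proposition: it is stated as the standard universal property of the envelope of holomorphy, implicitly taken from Rossi's theory. Your derivation is a legitimate, self-contained way to obtain it from exactly the data the paper records, namely that $\widehat D_2$ is Stein and that every holomorphic function on $D_1$ extends through the open embedding $f_1$. The chain Remmert embedding $\iota\colon\widehat D_2\hookrightarrow\C^N$, componentwise extension of $\iota\circ f_2\circ F$ to $H\colon\widehat D_1\to\C^N$, Cartan's Theorem A (or simply the fact that a closed analytic subset of a Stein manifold is the common zero set of global functions) to force $H(\widehat D_1)\subseteq\iota(\widehat D_2)$, and then $\widehat F:=\iota^{-1}\circ H$, is sound; the corestriction is holomorphic because $\iota(\widehat D_2)$ is a closed complex submanifold, and your uniqueness argument via composing with $\iota$ and the identity theorem is fine. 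The only point that genuinely needs the hypothesis you flagged is that $f_1(D_1)$ meets every connected component of $\widehat D_1$; this is automatic in the setting where the proposition is applied (the $D_i$ are connected domains and Rossi's envelope of a connected domain is connected), and your parenthetical componentwise reduction covers the general case, so the gap is only cosmetic. In short: correct, and necessarily a different route from the paper only in the trivial sense that the paper gives none.
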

 
 \pn 
As a consequence of the above proposition, the following facts hold true.
\begin{prop}
\label{TARGET}
 Let $X$ be a Stein manifold and   let $D\subset X$ be a domain with  envelope  of holomorphy $\widehat D$  {\rm (}cf. diagram {\rm (}\ref{ENVELOPE}{\rm )}{\rm )}. 
 \item{{\rm (i)}} Let  $\Omega$ be the smallest Stein domain in $X$ containing $D$. Then $q(\widehat D)$ is contained in $\Omega$. \item{{\rm (ii)}}  Let $\Omega$ be a domain in $X$ containing $D$. Assume there exists a holomorphic map $\hat f\colon \Omega \to \widehat D$ extending $f$.  Then $\widehat \Omega=\widehat D$.  \end{prop}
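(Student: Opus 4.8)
The plan is to derive both assertions from the universal property of the envelope of holomorphy (Proposition~\ref{UNIVERSALITY}), combined with the identity principle for holomorphic maps between complex manifolds. Throughout I would freely use three standard facts: a Stein manifold is its own envelope of holomorphy, with both structure maps the identity; the envelope of holomorphy of a \emph{domain} is again a domain, in particular connected; and two holomorphic maps from a connected complex manifold into a complex manifold that coincide on a non-empty open subset are equal.

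For (i), regard $D\subset\Omega\subset X$ and let $j\colon\Omega\hookrightarrow X$ denote the inclusion. Since $\Omega$ is Stein, its envelope of holomorphy is $\Omega$ itself, so applying Proposition~\ref{UNIVERSALITY} to the inclusion $D\hookrightarrow\Omega$ yields a holomorphic map $\varphi\colon\widehat D\to\Omega$ with $\varphi\circ f$ equal to the inclusion $D\hookrightarrow\Omega$. The two holomorphic maps $q$ and $j\circ\varphi$ from $\widehat D$ to $X$ then agree on the non-empty open subset $f(D)$ of the connected manifold $\widehat D$, because by the commutativity of diagram~(\ref{ENVELOPE}) both $q\circ f$ and $j\circ\varphi\circ f$ equal the inclusion $D\hookrightarrow X$. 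Hence $q=j\circ\varphi$, and therefore $q(\widehat D)=\varphi(\widehat D)\subseteq\Omega$.

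For (ii), let $g\colon\Omega\to\widehat\Omega$ and $p\colon\widehat\Omega\to X$ be an envelope of holomorphy of $\Omega$, so that $p\circ g$ is the inclusion $\Omega\hookrightarrow X$. Applying Proposition~\ref{UNIVERSALITY} to the inclusion $D\hookrightarrow\Omega$ gives a holomorphic map $\alpha\colon\widehat D\to\widehat\Omega$ with $\alpha\circ f=g|_D$, and applying it to the holomorphic map $\hat f\colon\Omega\to\widehat D$ (whose target $\widehat D$ is Stein, hence its own envelope) gives a holomorphic map $\beta\colon\widehat\Omega\to\widehat D$ with $\beta\circ g=\hat f$. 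Since $\hat f|_D=f$, the maps $\alpha\circ\hat f$ and $g$ from $\Omega$ to $\widehat\Omega$ coincide on $D$, hence $\alpha\circ\hat f=g$ on the connected manifold $\Omega$. Now $\beta\circ\alpha\circ f=\beta\circ g|_D=\hat f|_D=f$, so $\beta\circ\alpha=\mathrm{Id}_{\widehat D}$ by the identity principle on $\widehat D$; and $\alpha\circ\beta\circ g=\alpha\circ\hat f=g$, so $\alpha\circ\beta=\mathrm{Id}_{\widehat\Omega}$ likewise. Thus $\alpha$ is a biholomorphism. Finally $p\circ\alpha\circ f=p\circ g|_D$ equals the inclusion $D\hookrightarrow X$, which is also $q\circ f$, whence $p\circ\alpha=q$; so $\alpha$ identifies $\widehat D$ with $\widehat\Omega$ as Riemann domains over $X$, compatibly with the inclusion $D\subset\Omega$, which is the intended meaning of $\widehat\Omega=\widehat D$.

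This is essentially a diagram chase, and I expect no substantial obstacle. The only point requiring care is the repeated appeal to the identity principle, which is legitimate precisely because $D$ and $\Omega$ are domains — so $\widehat D$ and $\widehat\Omega$ are connected — and because $f$ and $g$ are open holomorphic embeddings, so that $f(D)$ and $g(\Omega)$ are non-empty open sets on which to compare the relevant maps; beyond this bookkeeping one only needs to fix the precise sense in which $\widehat\Omega$ equals $\widehat D$.
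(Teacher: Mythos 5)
Your argument is correct and is exactly the intended one: the paper offers no separate proof, stating the proposition as an immediate consequence of Proposition \ref{UNIVERSALITY}, and your diagram chase (universality applied to $D\hookrightarrow\Omega$ and to $\hat f$, plus the identity principle on the connected manifolds $\widehat D$, $\widehat\Omega$, $\Omega$) is the standard way to make that precise. No gaps; the bookkeeping with $f(D)$ and $g(\Omega)$ as the open sets for the identity principle is handled properly.
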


\bn

If $G$ is a Lie group acting on $X$ by biholomorphisms and the domain $D$ is  $G$-invariant,  then the $G$-action lifts to an action on $\widehat D$ and all the maps in  diagram~(\ref{ENVELOPE}) are  $G$-equivariant.
Coming back to our case, let $$D=G\exp i{\mathcal D}\cdot x_0=G \exp i{\mathcal D}^\llcorner\cdot x_0$$  be a $G$-invariant domain in $\Xi^+$.  Since $\Xi^+$ is Stein, one has   a commutative
diagram

\bn
\begin{equation} 
\xymatrix{&\widehat D\ar[d]^q\\
 D\ar[ur]^f \ar[r]^{Id}& \Xi^+ } 
 \label{envelope}
 \end{equation}

\bn
where all maps are $G$-equivariant. We  prove that under the assumption that $D$  is not entirely contained  in  $\Xi$ nor in    $S^+$ (in the tube case),  the map $f\colon D\to\widehat D$ can be $G$-equivariantly extended to the whole $\Xi^+$. 
We gradually enlarge the domain of definition of $f$ by iterating the following arguments. 

By reduction~1, we show that $f$ can be $G$-equivariantly extended   to a domain $G\exp i\widetilde {\mathcal D}^\llcorner\cdot x_0$  with all the connected components of  $\widetilde {\mathcal D}^\llcorner$ convex (see Prop.\,\ref{REDUCTION1}).  By reduction 2, we show that $f$ can be $G$-equivariantly extended to a domain with $\widetilde {\mathcal D}^\llcorner$ connected (see Prop.\,\ref{REDUCTION2}), and therefore convex.  

The third  basic ingredient  is the rank-one reduction. It is based  on the univalence and the precise description of the envelope of holomorphy of an arbitrary $G$-invariant domain in the complexification of a rank-one Hermitian symmetric space (cf. \cite{GeIa08}).  
Finally,   by applying Proposition  \ref{TARGET}(ii), one  obtains  $\widehat D=\Xi^+$,
The  strategy is similar to the one used  by Neeb in \cite{Nee98}. 

\bn
{\bf The rank-one case.}  
For the reader's convenience we recall the rank-one case,  in the formulation which is needed in this paper.

We begin with  the tube case $G/K=SL(2,\R)/SO(2,\R)\cong SU(1,1)/U(1)$.
Let $\{E, \theta E, A\}$ be the basis of  $\g=\s\l(2,\R)$  defined in (\ref{TRIPLES}).
Then $\Xi^+=G\exp i[0,\infty) E\cdot x_0$  and every $G$-invariant domain
in $\Xi^+$ is of the form $D=G \exp iIE \cdot x_0$, where $I$ is an open,
connected interval in $[0, \infty)$.

The curve $\ell: [0,+\infty ) \to \Xi^+$, given by $t \to \exp it E \cdot x_0$,
starts at $x_0$ and intersects every $G$-orbit in $\Xi^+$ precisely once.
For every $t>0$ the orbit  $G\cdot \ell(t)$ is a real hypersurface in $\Xi^+$.
Denote by  $ T^{CR}_{\ell(t)}(G\cdot \ell(t)):=
T_{\ell(t)}(G\cdot \ell(t))\cap J_{\ell(t)}T_{\ell(t)}(G\cdot \ell(t))$
the complex tangent space to  $G\cdot \ell(t)$
at  $\ell(t)$.
The quadratic Levi form of  $G\cdot \ell(t)$ at  $\ell(t)$ is given by
$$
{\mathcal L}_{\ell(t)}(W,W)=\frac{1}{8}\frac{(t^2-1)}{t}|W|^2  \ \dot \ell(t)\,,
\quad {\rm for} \ W \in   T^{CR}_{\ell(t)}(G\cdot \ell(t)) \,.
$$
The above formula shows that  for every $t \not=1$ the hypersurface $G\cdot \ell(t)$
has non-degenerate definite Levi form. Hence it bounds a
Stein $G$-invariant domain in $\Xi^+$.
Note that the concavities of the hypersurfaces $G\cdot \ell(t)$, for $t<1$, and $G\cdot \ell(t)$, for $t>1$, point in opposite directions. The hypersurface   $G\cdot \ell(1)$
is Levi-flat.
All proper Stein, $G$-invariant subdomains of  $\Xi^+$ are given by
(cf.\,Lemma 8.1 in \cite{GeIa13} and  Ex. 6.2 \cite{GeIa08})
\begin{equation}
\label{CLASSIFICATION}
\begin{aligned}
&G \exp i[0,b)E \cdot x_0, \ \  \quad {\rm for} \quad  0 < b \leq 1 \cr
&G \exp i(a,\infty)E \cdot x_0,  \quad {\rm for} \quad  1 \leq a < \infty\,.
\end{aligned}
\end{equation}
Moreover one has the following description of envelopes of holomorphy in $\Xi^+$.


\bigskip
\begin{prop}
\label{RANKONE1}
Let $G= SL(2,\R)$ and  let $D$ be a  $G$-invariant domain in $  \Xi^+$. Then the
envelope of holomorphy $\widehat D$ of $D$ is univalent and   given as follows.
\smallskip
\item{{\rm (i)}} If  $  D = G \exp i(a, b)E \cdot x_0$ or $  D = G \exp i[0, b)E \cdot x_0$, with $b \leq 1$, then $$\widehat D = G \exp i[0, b)E \cdot x_0;$$

\item{{\rm (ii)}}   If $  D = G \exp i(a, b)E \cdot x_0$ or
$D = G \exp i(a, \infty)E \cdot x_0$ , with $1 \leq a$, then  $$\widehat D = G \exp i(a, \infty) E \cdot x_0;$$

\item{{\rm (iii)}}   If $D$ contains the orbit $G\cdot \ell(1)$, then $\widehat D = \Xi^+$.
\end{prop}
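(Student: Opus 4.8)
The plan is to analyze the $G$-invariant geometry of $\Xi^+$ for $G = SL(2,\R)$ through the curve $\ell$ and the Levi form computation already stated, treating the three cases (i), (ii), (iii) in turn. Since every $G$-invariant domain is $D = G\exp iIE\cdot x_0$ for an open interval $I\subseteq[0,\infty)$, and the slice $\ell$ meets each orbit exactly once, envelopes of holomorphy correspond to minimal Stein domains containing $D$ via Proposition \ref{TARGET}(i), together with the classification (\ref{CLASSIFICATION}) of all proper Stein invariant subdomains.

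For case (i), where $I = (a,b)$ or $[0,b)$ with $b\le 1$: the hypersurfaces $G\cdot\ell(t)$ for $t<1$ are pseudoconvex from the side of larger $t$ (their concavity points toward $t=0$), by the sign of the Levi form $\frac{1}{8}\frac{t^2-1}{t}|W|^2\,\dot\ell(t)$. Hence holomorphic functions on $D$ extend across the inner boundary $t=a$ down to $t=0$, giving an extension to $G\exp i[0,b)E\cdot x_0$; and by (\ref{CLASSIFICATION}) this larger domain is itself Stein, so by the universality of the envelope it equals $\widehat D$. Univalence is automatic because the extension stays inside $\Xi^+$ and $q$ is then a biholomorphism onto $G\exp i[0,b)E\cdot x_0$. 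Case (ii) is the mirror image: for $t>1$ the concavity points toward larger $t$, so functions extend across the outer boundary to $t=\infty$, landing in the Stein domain $G\exp i(a,\infty)E\cdot x_0$, which is therefore the envelope.

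For case (iii), where $D\supseteq G\cdot\ell(1)$: now $I$ is an interval containing $1$, so $I\supseteq(a,b)$ with $a<1<b$. By cases (i) and (ii) applied to the pieces, functions on $D$ extend both inward past $t=1$ toward $0$ and outward toward $\infty$; combining these extensions one obtains a holomorphic extension of $f$ to all of $\Xi^+ = G\exp i[0,\infty)E\cdot x_0$. Then Proposition \ref{TARGET}(ii) with $\Omega = \Xi^+$ gives $\widehat D = \widehat{\Xi^+} = \Xi^+$ (the latter since $\Xi^+$ is Stein), and univalence follows since $q$ identifies $\widehat D$ with $\Xi^+$.

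The only genuine subtlety — and the step I would be most careful about — is justifying that the one-sided Levi-form sign really produces a \emph{global} $G$-equivariant holomorphic extension across each non-Levi-flat orbit, not merely a local one-sided extension near a point. This is handled by the standard Hartogs-type / continuity-principle argument for invariant domains foliated by homogeneous Levi-definite hypersurfaces: one extends holomorphic functions slice by slice along $\ell$ using the Levi sign, then reassembles using $G$-invariance, exactly as in the references \cite{GeIa08}, Ex.\,6.2 and \cite{GeIa13}, Lemma 8.1 which already record the classification (\ref{CLASSIFICATION}). Since the target $\widehat D$ is Stein and the extended maps are forced to land in the minimal Stein domain containing $D$, univalence is not an extra hypothesis but a consequence; the bulk of the work is simply bookkeeping the two concavity directions against the interval $I$.
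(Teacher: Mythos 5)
There is a genuine gap, and it sits exactly where you flagged the ``only genuine subtlety''. The paper does not prove this proposition by a Levi-form/continuity-principle extension at all: it first establishes \emph{univalence} by observing that the center of $SL(2,\R)$ acts trivially on $D$, hence (by analytic continuation) on $\widehat D$, so that $q\colon \widehat D\to \Xi^+$ is $PSL(2,\R)$-equivariant, and then invoking Theorem 7.6 of \cite{GeIa08}, which asserts injectivity of $q$ for such equivariant Riemann domains over the rank-one complexification. Only \emph{after} that does it use Proposition \ref{TARGET} and the classification (\ref{CLASSIFICATION}) to identify $\widehat D$ with the smallest Stein invariant domain containing $D$. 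Your proposal inverts this: you treat univalence as ``automatic'' once the one-sided Levi sign is known, but the sign of the Levi form only gives \emph{local} one-sided extensions near each boundary point. To conclude that all holomorphic functions on $D$ (equivalently, the map $f\colon D\to\widehat D$) continue \emph{single-valuedly} to the larger invariant domain you must rule out monodromy, and $D=G\exp i(a,b)E\cdot x_0$ is not simply connected (it is a circle-bundle-type domain over $G/K$), so monodromy is a real possibility a priori. Indeed, the paper's closing remark exhibits a nontrivial $G$-equivariant Stein covering of $S^+$ in the tube case, so univalence statements of this kind are genuinely nontrivial and are not consequences of the classification of Stein invariant domains; the references you cite for the ``standard argument'' (\cite{GeIa08}, Ex.\,6.2 and \cite{GeIa13}, Lemma 8.1) only record that classification, not the single-valuedness of the continuation. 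Without either quoting the univalence theorem for equivariant Riemann domains (as the paper does) or constructing the global equivariant extension by hand (as the paper does in higher rank via the extension lemma, isotropy comparison and compatibility conditions), your argument does not close.

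Two secondary points. First, in case (i) with $a>0$ the claimed envelope $G\exp i[0,b)E\cdot x_0$ contains the orbit $G\cdot x_0\cong G/K$ at $t=0$, which is a maximal-dimensional totally real submanifold, not a hypersurface; extending across it is not a Levi-form step and needs a separate removability argument (the paper handles the analogous step in Theorem \ref{MAIN} by citing \cite{Fie82}). Second, even granting single-valued extension of functions, to apply Proposition \ref{TARGET}(ii) you need a holomorphic extension of the \emph{map} $f$ into $\widehat D$ over the larger domain, and your phrase that ``the extended maps are forced to land in the minimal Stein domain containing $D$'' conflates the target $\widehat D$ with its image under $q$; this is harmless once univalence is known, but it cannot be used to derive univalence.
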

\bn

\begin{proof}
The center $Z$ of $SL(2,\R)$ acts trivially on $D \subset G^\C/K^\C$
and, by the analytic continuation principle, on $\widehat D$.
Thus   
the projection $q: \widehat D \to \Xi^+$ is $PSL(2,\R)$-equivariant
and, by Theorem 7.6 in \cite{GeIa08},   is injective.
Then, by Proposition \ref{TARGET},  the envelope of holomorphy
$\widehat D$ coincides
with the smallest Stein, $G$-invariant domain in $\Xi^+$
containing $D$. The rest of the statement follows from the
classification of all Stein, $G$-invariant domains in $\Xi^+$
given in (\ref{CLASSIFICATION}).
\end{proof}
\bn

Consider now the rank-one, Hermitan symmetric space $G/K=SU(n,1)/U(n)$, for $n>1$, which is not of tube type.
The difference with the previous case lies  in the fact that, for
 $t>1$, the hypersurface  $G\cdot \ell(t)$ has  non-degenerate, indefinite  Levi form.
As a consequence it cannot lie on the boundary of a Stein $G$-invariant domain in
$\Xi^+$.
The hypersurface  $G\cdot  \ell(1)$ has semidefinite Levi form  and
lies on the boundary of the crown domain $\Xi$, which is Stein.
In this case all proper, Stein, $G$-invariant subdomains of $\Xi^+$ are given by
$$G \exp i[0,b),  \quad {\rm for} \quad  0 < b \leq 1 \,,$$
(cf. Lemma 8.1 in \cite{GeIa13}, and  Ex. 6.3 \cite{GeIa08}) 
and   similar  arguments as in Proposition \ref{RANKONE1}
give  the description of the envelopes of holomorphy in this case.


\bigskip
\begin{prop}
\label{RANKONE2}    Let $G= SU(n,1)$ and  let $D$ be a  $G$-invariant domain in $  \Xi^+$. Then the envelope of holomorphy $\widehat D$ is univalent and given as follows.

\smallskip
\item{{\rm (i)}} If  $D = G \exp i(a, b)E \cdot x_0$
or $D = G \exp i[0, b)E \cdot x_0$, with $b\leq 1$, then
$$\widehat D = G \exp i[0, b)E \cdot x_0 \,.$$
\item{{\rm (ii)}}   If $D$ contains an orbit $G\cdot \ell(t)$, for some $t\ge 1$, then $\widehat D = \Xi^+$.
\end{prop}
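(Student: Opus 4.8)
The plan is to mimic the argument of Proposition \ref{RANKONE1}, using the classification of Stein invariant subdomains of $\Xi^+$ in the non-tube rank-one case together with the univalence theorem of \cite{GeIa08}. First I would observe that, exactly as in the tube case, the center $Z$ of $SU(n,1)$ acts trivially on $D\subset G^\C/K^\C$, hence by analytic continuation on $\widehat D$; therefore the projection $q\colon\widehat D\to\Xi^+$ is equivariant for the effectively acting quotient group $PSU(n,1)$. By the univalence result for $G$-equivariant Riemann domains over the complexification of a rank-one Hermitian symmetric space (\cite{GeIa08}, Theorem 7.6), $q$ is injective. Consequently $\widehat D$ may be identified with an invariant subdomain of $\Xi^+$, and by Proposition \ref{TARGET}(i) it must be the smallest Stein, $G$-invariant domain of $\Xi^+$ containing $D$.

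The remaining task is then purely combinatorial, reading off this smallest Stein domain from the Levi geometry of the slicing curve $\ell(t)=\exp itE\cdot x_0$. Here the point to keep in mind is the difference with the tube case: for $t>1$ the hypersurface $G\cdot\ell(t)$ has nondegenerate indefinite Levi form, so by the Kontinuit\"atssatz it cannot appear in the boundary of any Stein invariant domain, whereas $G\cdot\ell(1)$ has semidefinite Levi form and bounds the Stein crown $\Xi$. Hence the only proper Stein invariant subdomains of $\Xi^+$ are the $G\exp i[0,b)E\cdot x_0$ with $0<b\le 1$. In case (i), $D=G\exp i(a,b)E\cdot x_0$ or $D=G\exp i[0,b)E\cdot x_0$ with $b\le 1$; such a $D$ is already contained in $\Xi$, and the smallest Stein invariant domain containing it is $G\exp i[0,b)E\cdot x_0$ (itself a ball, hence Stein, and no smaller such domain exists since any Stein invariant domain containing $D$ must contain $\bigcup_{0<t<b}G\cdot\ell(t)$ and then $x_0$). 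This gives $\widehat D=G\exp i[0,b)E\cdot x_0$.

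In case (ii), $D$ contains some orbit $G\cdot\ell(t_0)$ with $t_0\ge 1$. If $t_0>1$, then $D$ contains a hypersurface orbit with indefinite Levi form, which by the extension property of such orbits cannot survive in the boundary of any Stein invariant domain; so the smallest Stein invariant domain containing $D$ must in fact contain an open neighborhood of $G\cdot\ell(t_0)$ on both sides, and iterating this sweeping-out one reaches all of $\Xi^+$. If $t_0=1$, then $D$ already meets the Levi-flat--free locus on both sides of $\ell(1)$: indeed $D$ is an invariant \emph{domain} containing $\ell(1)$, hence contains $\ell(t)$ for $t$ slightly larger than $1$ as well, and the previous argument again forces $\widehat D=\Xi^+$. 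In either subcase Proposition \ref{TARGET}(i) then yields $\widehat D=\Xi^+$. The only point requiring genuine care — and the natural main obstacle — is the appeal to the Kontinuit\"atssatz to argue that an indefinite-Levi-form orbit cannot bound a Stein invariant domain and must be ``absorbed'' into the interior; but this is precisely the content already established in \cite{GeIa08}, Example 6.3, and invoking it here completes the proof.
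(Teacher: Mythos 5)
Your proposal follows the paper's proof essentially verbatim: quotient out the center so that $q\colon\widehat D\to\Xi^+$ is equivariant for the effective group, invoke the univalence theorem of \cite{GeIa08} (Thm.\ 7.6) to get injectivity of $q$, identify $\widehat D$ with the smallest Stein $G$-invariant domain containing $D$ via Proposition \ref{TARGET}, and read off the answer from the classification of proper Stein invariant subdomains $G\exp i[0,b)E\cdot x_0$, $0<b\le 1$ (indefinite Levi form for $t>1$; cf.\ Lemma 8.1 in \cite{GeIa13} and Ex.\ 6.3 in \cite{GeIa08}), exactly as the paper does by transporting the argument of Proposition \ref{RANKONE1}. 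The extra ``sweeping-out'' discussion in case (ii) and the aside that $G\exp i[0,b)E\cdot x_0$ is ``a ball'' are unnecessary (and the latter is not literally accurate): once the classification is in hand, no proper Stein invariant subdomain contains $\ell(t_0)$ for $t_0\ge 1$, so $\widehat D=\Xi^+$ follows directly, and Steinness of $G\exp i[0,b)E\cdot x_0$ is part of the cited classification rather than something to be argued separately.
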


\bn
{\bf The extension lemma.} The goal of this subsection is to prove the ``extension lemma", which provides sufficient conditions for a continuous lift $f\colon \exp i\mathcal C\cdot x_0\to \widehat D$ to extend  to a $G$-equivariant holomorphic map $\hat f\colon G\exp i\mathcal C\cdot x_0\to \widehat D$. One of the conditions  involves the isotropy subgroups of points $z\in D$ and $f(z)\in \widehat D$.

Since the projection $q\colon \widehat D\to \Xi^+$ is a  $G$-equivariant  local biholomorphism, the isotropy subgroup of $z\in \widehat D$ is the union of connected components  of the isotropy subgroup of $q(z)\in \Xi^+$. 
In addition, since   $f\colon D\to \widehat D$ is a $G$-equivariant biholomorphism onto its image
and $q|_{f(D)} \circ f=Id_D$, there is actually   an identity of isotropy subgroups   
 $G_{z}= G_{q(z)}$,  for all $z \in f(D)$.
In the sequel it will be crucial to have such an identity of isotropy subgroups
for  points lying in suitable submanifolds of $q(\widehat D)$ intersecting
$D$, to which the map $f$  extends holomorphically.


\bigskip
\begin{lemma}
\label{ISOTROPIE}
Let  ${\mathcal C }$ be an open
subset of  $ \Lambda_r^{\llcorner}$ and let  $f:\exp i {\mathcal C }
\cdot x_0 \to \widehat D$ be a continuous map such that $q \circ f=Id$. Assume that  there exists an  open subset ${\mathcal F}$
of ${\mathcal C }$ such that

\item{{\rm (i)} }
$G_{f(\exp iX^\prime\cdot x_0)}=G_{\exp iX^\prime\cdot x_0} $ for all $X^\prime$ in
${\mathcal F}$,

\item{{\rm (ii)}} for every $X\in {\mathcal C }$, there exist an element
$X^\prime \in {\mathcal F}$ such that the segment $\{\,
X^\prime+t(X-X^\prime) \ : \ t\in[0,1] \}$ is contained in
${\mathcal C }$, and a holomorphic extension of $f$ to the
submanifold ${\mathcal S}=\{\, \exp (i(X^\prime + \lambda (X-X^\prime)))\cdot x_0 \ : {\rm Re} \lambda \in [0,1] \, \}$.

\noindent
Then  $G_{f(\exp iX\cdot x_0)}=G_{\exp iX\cdot x_0} $, 
for every $X$ in $\mathcal C $.
\end{lemma}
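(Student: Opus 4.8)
The plan is to show that the isotropy identity $G_{f(\exp iX\cdot x_0)}=G_{\exp iX\cdot x_0}$ propagates from the open set $\mathcal F$ (where it holds by hypothesis (i)) to an arbitrary point $X\in\mathcal C$ along the segment provided by hypothesis (ii), using the holomorphic extension of $f$ to the complex one-dimensional submanifold $\mathcal S$. The starting observation is the one recorded just before the lemma: for any $z\in\widehat D$ the isotropy $G_z$ is a union of connected components of $G_{q(z)}$, so it always suffices to prove that $G_{\exp iX\cdot x_0}$ is \emph{contained} in $G_{f(\exp iX\cdot x_0)}$, and for this in turn it is enough to show that the \emph{identity component} $(G_{\exp iX\cdot x_0})^\circ$ fixes $f(\exp iX\cdot x_0)$ and that every connected component of $G_{\exp iX\cdot x_0}$ meets $(G_{\exp iX\cdot x_0})^\circ\cdot Z_K(\Lambda_r)$ — here Lemma \ref{COMPONENTS} (applied via the isotropy description $G_{\exp iX\cdot x_0}\supseteq Z_K(X)$) tells us each component of $Z_K(X)$ meets $Z_K(\Lambda_r)$, and $Z_K(\Lambda_r)$ fixes the whole slice $\exp i\Lambda_r^\llcorner\cdot x_0$ pointwise, hence its image under $f$ as well since $f$ restricts to the identity-under-$q$ on that slice.

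The heart of the argument is the propagation along $\mathcal S$. Fix $X\in\mathcal C$, pick $X'\in\mathcal F$ and the segment as in (ii), and let $g\in (G_{\exp iX\cdot x_0})^\circ$. First I would reduce to the case where $g$ also lies in $Z_K(X')$-type isotropy: the connected centralizer $Z_K(X')$ varies, but the key point is that along the real segment $X'+t(X-X')$, $t\in[0,1]$, one can find a \emph{common} connected subgroup fixing all these points. More precisely, consider the curve $c(t)=\exp i(X'+t(X-X'))\cdot x_0$; an element $g\in G$ fixing both endpoints of a real-analytic $G$-orbit-crossing curve need not fix the whole curve, so instead I would argue the other direction: start from $g\in (G_{c(0)})^\circ=(G_{\exp iX'\cdot x_0})^\circ$, which by (i) already satisfies $g\in G_{f(c(0))}$, and show this persists to $t=1$. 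The map $\lambda\mapsto g\cdot f(\exp(i(X'+\lambda(X-X')))\cdot x_0)$ is holomorphic on a neighborhood of $\{\operatorname{Re}\lambda\in[0,1]\}$ by the assumed holomorphic extension of $f$ to $\mathcal S$ and the holomorphy of the $G$-action; it agrees with $\lambda\mapsto f(\exp(i(X'+\lambda(X-X')))\cdot x_0)$ on a neighborhood of $\lambda=0$ (since $g$ fixes $f(c(0))$ and, by continuity and (i), fixes $f(c(t))$ for $t$ near $0$ — using that $\mathcal F$ is open so small perturbations of $X'$ stay in $\mathcal F$). By the identity principle on the connected set $\{\operatorname{Re}\lambda\in[0,1]\}$, the two holomorphic maps coincide, hence $g$ fixes $f(c(1))=f(\exp iX\cdot x_0)$.

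This handles the identity component: since $(G_{\exp iX\cdot x_0})^\circ$ is generated by such one-parameter behavior near the identity — or more cleanly, since $(G_{\exp iX'\cdot x_0})^\circ\supseteq Z_K(X')^\circ$ and one can connect $(G_{\exp iX\cdot x_0})^\circ$ to $\mathcal F$-isotropy using that the isotropy bundle over a slice chamber is locally trivial — we get $(G_{\exp iX\cdot x_0})^\circ\subseteq G_{f(\exp iX\cdot x_0)}$. For the other components, apply Lemma \ref{COMPONENTS}: every connected component of $G_{\exp iX\cdot x_0}$, which contains $Z_K(X)$ up to the torus factor, meets $Z_K(\Lambda_r)$; and $Z_K(\Lambda_r)\subseteq G_{f(\exp iX\cdot x_0)}$ because $Z_K(\Lambda_r)$ fixes $\exp iX\cdot x_0$, hence (being in $K$ and acting on the slice) fixes a whole neighborhood in $\exp i\mathcal C\cdot x_0$, so by $q\circ f=\mathrm{Id}$ and equivariance it fixes $f(\exp iX\cdot x_0)$. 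Combining, $G_{\exp iX\cdot x_0}\subseteq G_{f(\exp iX\cdot x_0)}$, and the reverse inclusion is automatic from $q\circ f=\mathrm{Id}$ and $G$-equivariance of $q$. Hence equality, for every $X\in\mathcal C$.

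The main obstacle I anticipate is the first part of the third paragraph — rigorously getting the identity-component inclusion to ``travel'' along the segment. The subtlety is that $G_{c(t)}$ genuinely changes with $t$ (the orbit type jumps), so one cannot simply track a fixed element $g$; the clean way is to observe that for $t$ near $0$ one has, by openness of $\mathcal F$ and hypothesis (i), the identity $G_{f(c(t))}=G_{c(t)}$, then transport a given $g\in(G_{c(1)})^\circ$ back by noting $(G_{c(1)})^\circ\subseteq (G_{c(t)})^\circ$ fails in general but $g$ does fix $c(t)$ for all $t$ if $g$ fixes the segment's image pointwise — which one arranges by first replacing $g$ by a generator of $(G_{c(1)})^\circ$ lying in $Z_K(\Psi(X))^\circ$ and invoking the explicit $\mathfrak{sl}(2)$-triple structure (\ref{TRIPLES})--(\ref{RELATIONS2}) to see that the relevant centralizing subalgebra along the segment is constant. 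Making this ``constant centralizer along the chamber segment'' statement precise, presumably via Lemma 5.6 of \cite{GeIa13} relating $Z_K(X)$ to $Z_K(\Psi(X))$ and the convexity of the segment inside the chamber, is where the real work lies; the identity-principle step itself is then routine.
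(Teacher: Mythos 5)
Your overall toolkit is the right one (propagation along the strip via the identity principle, plus Lemma \ref{COMPONENTS} to pass from $Z_K(\Lambda_r)$ to the other components of $Z_K(X)$), but there are two genuine gaps. First, the claim that $Z_K(\Lambda_r)\subseteq G_{f(\exp iX\cdot x_0)}$ ``because $Z_K(\Lambda_r)$ fixes the slice pointwise, so by $q\circ f=\mathrm{Id}$ and equivariance it fixes $f(\exp iX\cdot x_0)$'' is circular: $f$ is only a continuous section of the local biholomorphism $q$, not an equivariant map, and an element $g$ fixing $\exp iX\cdot x_0$ merely permutes the (possibly several) points of the fibre $q^{-1}(\exp iX\cdot x_0)$; that $g$ fixes the particular point $f(\exp iX\cdot x_0)$ is exactly the non-trivial content of the lemma and can only be obtained by propagating it from $\mathcal F$, where hypothesis (i) supplies it. Second, your propagation step transports the wrong elements: what must be shown is that elements of $G_{\exp iX\cdot x_0}=Z_K(X)$ fix $f(\exp iX\cdot x_0)$, i.e.\ one must deal with the isotropy at the endpoint $c(1)$, and, as you yourself note, $G_{c(t)}$ jumps along the segment, so an element of $(G_{c(1)})^\circ$ need not fix $c(t)$ for $t<1$ and the map $\lambda\mapsto g\cdot f(\exp i(X'+\lambda(X-X'))\cdot x_0)$ is then not even a competitor section over the segment. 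The remedy you sketch (``the relevant centralizing subalgebra along the segment is constant'', via the $\mathfrak{sl}(2)$ relations and Lemma 5.6 of \cite{GeIa13}) is false precisely in the cases that matter: when $X$ has repeated or vanishing coordinates, $Z_K(X)$ is strictly larger than the centralizer at interior points of the segment, so no such constancy holds, and this is where your argument breaks down.

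The missing idea is the reduction to generic elements. In the paper's argument one first takes $X$ (and, using openness, also $X'\in\mathcal F$) generic, meaning $Z_K(X)=Z_K(\Lambda_r)$; for such $X$ one only has to propagate elements $g\in Z_K(\Lambda_r)$, a \emph{fixed} group that stabilizes every point of the slice, so $g\cdot\phi$ and $\phi$ are both lifts of the same segment, $0$ lies in the coincidence set by (i), and the holomorphic extension on $\mathcal S$ together with real-analytic continuation (your identity-principle step, done correctly on the closed strip via local defining functions of $\mathrm{Fix}(g,\widehat D)$) forces coincidence at $\lambda=1$. For arbitrary $X\in\mathcal C$ one then approximates by generic $X_n\to X$, uses continuity of $f$ to get $Z_K(\Lambda_r)\subseteq G_{f(\exp iX\cdot x_0)}$, and only then invokes Lemma \ref{COMPONENTS} together with the fact that $G_{f(\exp iX\cdot x_0)}$ is a union of connected components of $G_{\exp iX\cdot x_0}$ to conclude $Z_K(X)\subseteq G_{f(\exp iX\cdot x_0)}$. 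Without this generic-point/density step your proof does not close.
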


\begin{proof} Since $q$ is $G$-equivariant and $q \circ f =Id$ on $\exp i{\mathcal C}\cdot x_0$, it is
clear that $G_{f(\exp iX\cdot x_0)} \subset G_{\exp iX\cdot x_0}$
for all $X\in\mathcal{C}$. In order to prove the opposite inclusion,
we consider first   generic elements in $\mathcal{C}$.

By definition, generic elements $X\in \Lambda_r^\llcorner$ are those for which  $Z_K(X)=Z_K(\Lambda_r)$, and by   Lemma 7.3  in \cite{GeIa13}, 
they are dense in $\Lambda_r^\llcorner$.
Let  $X$  be a generic element in $\mathcal{C}$
and let $g$ be an element in $ G_{\exp iX\cdot x_0} = Z_K(\Lambda_r)$.
The fixed point set of $g$ in $\widehat D$
$$Fix(g,\widehat D):=\{z\in \widehat D~|~g\cdot z=z\}$$
 is a complex analytic subset of $\widehat D$.
Let  $X^\prime\in{\mathcal F} $ be an element satisfying condition~(ii) of the lemma.
Since   both ${\mathcal C }$ and ${\mathcal F}$ are open,
 $X^\prime$ can be chosen generic as~well.
Consider the strip  $S:=\{\lambda\in \C \ : \  {\rm Re}\lambda\in\, [0,1]\}$
and define the function 
$$\phi\colon S\to \widehat D,\qquad  \phi(\lambda):=f(\exp i(X^\prime+\lambda(X-X^\prime)\cdot x_0))\,.$$
We  are going to show that the set $$A:=\{\lambda\in S~:~ g \cdot \phi(\lambda)=\phi(\lambda)\}$$ contains the element 1: this implies that
$f(\exp iX \cdot x_0)\in Fix(g,\widehat D)$ and proves the statement for $X$ generic.

Since  both $X$ and $X^\prime$ are generic in $\Lambda_r^\llcorner$, one has that 
 $G_{\exp iX^\prime\cdot x_0} = G_{\exp iX\cdot x_0} =
Z_K(\Lambda_r).$  Therefore  $g \in G_{\exp iX^\prime\cdot x_0}$ and, 
 by condition~(i), it follows that  $f(\exp iX^\prime \cdot x_0)\in Fix(g,\widehat D)$.  
Consequently $0\in A$.  Since ${\mathcal F}$ is open,  there exists
$\varepsilon>0$ such that $[0, \varepsilon) \subset A$.
Let $[0,b)$ be the maximal open interval in $A \cap \R$ containing $0$ and assume by contradiction that $b\not=1$.  Since $A$ is closed,  it follows that $b\in A$ and,
by the definition of $A$, one has that $\phi(b)\in Fix(g,\widehat D)$. Locally, in a neighbourhood $U$ of $\phi(b)$ in $ \widehat D$, the analytic set
$Fix(g,\widehat D)$ is given  as
$$Fix(g,\widehat D)\cap U=\{ z\in U~|~ \psi_1(z)=\ldots=\psi_k(z)=0\},$$ for some $\psi_1,\ldots,\psi_k\in {\mathcal O}(U)$. Thus,
for each $j=1,\ldots r$, the holomorphic function
$$\psi_j\circ \phi\, \colon \phi^{-1}(U) \to \C,\quad \lambda\mapsto \psi_j(f(\exp i(X^\prime+\lambda (X-X^\prime))\cdot x_0))$$
vanishes identically on  $[0,b] $.  Since $\phi^{-1}(U) $ is open in $S$, there exists $\epsilon'>0$ such that  the restriction
$\psi_j\circ \phi_{(b-\epsilon',b+\epsilon')}$ is real analytic and identically zero on $(b-\epsilon',b]$. Hence it is identically zero on the whole interval $(b-\epsilon',b+\epsilon')$, contradicting the maximality of $b$.
Thus $b=1$ and $b \in A$, as claimed. This concludes the case of generic elements in ${\mathcal C}$. 

Consider now a  non-generic element $X\in {\mathcal C}$. Since generic elements form an open dense subset of ${\mathcal C}$, there exists a sequence of generic elements $\{X_n\} \subset
{\mathcal C}$ converging to $X$. Recall that all generic elements in
${\mathcal C }$ have the same isotropy subgroup $Z_K(\Lambda_r)$.
Therefore, by the previous step, one has
$$g\cdot  f(\exp iX_n\cdot x_0)=f(\exp iX_n\cdot x_0),  \quad   {\rm for\ all}\ \  g\in
Z_K(\Lambda_r).$$
Passing to the limit, one obtains that
$g\cdot  f(\exp iX \cdot x_0)=f(\exp iX \cdot x_0)$, for all  $g\in Z_K(\Lambda_r)$.
This fact together with Lemma \ref{COMPONENTS} implies that
$  G_{\exp iX\cdot x_0} \subset G_{f(\exp iX\cdot x_0)}$ for all $X\in {\mathcal C} $,  and concludes the proof of the lemma.
\end{proof}


\bigskip
\begin{lemma}
\label{CONTINUOUS}
Let $D=G\exp i {\mathcal D}^\llcorner\cdot x_0$ be a $G$-invariant domain in $\Xi^+$ and let $X$ be a $G$-space. A $G$-equivariant map $f:D \to X$ is continuous if and only
if its restriction to $\exp i {\mathcal D}^\llcorner \cdot x_0$ is continuous.
\end{lemma}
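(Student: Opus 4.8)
The plan is to exploit the homeomorphism $\psi\colon G\times_K\mathcal N^+\to\Xi^+$ from Proposition \ref{HOMEOMORPHISM}, which restricts to a homeomorphism of $D$ onto $G\times_K(\Ad_K({\mathcal D}^\llcorner))$ and, in particular, presents $\exp i{\mathcal D}^\llcorner\cdot x_0$ as a slice that meets every $G$-orbit in $D$. One direction is trivial: if $f$ is continuous on $D$, then its restriction to the subset $\exp i{\mathcal D}^\llcorner\cdot x_0$ is continuous. So the content is the converse.

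Assume $f|_{\exp i{\mathcal D}^\llcorner\cdot x_0}$ is continuous. Since $\psi$ is a homeomorphism, it suffices to show that the induced map $\tilde f\colon G\times_K(\Ad_K({\mathcal D}^\llcorner))\to X$, $[g,X]\mapsto g\cdot f(\exp iX\cdot x_0)$ for $X\in{\mathcal D}^\llcorner$, is well defined and continuous; equivalently, that the composite $G\times(\Ad_K({\mathcal D}^\llcorner))\to X$, $(g,Y)\mapsto \tilde f([g,Y])$, is continuous, since the quotient map $G\times\mathcal N^+\to G\times_K\mathcal N^+$ is open. First I would reduce every $Y\in\Ad_K({\mathcal D}^\llcorner)$ to the slice: writing $Y=\Ad_k(X)$ with $X\in{\mathcal D}^\llcorner$ and $k\in K$, the $G$-equivariance of $f$ forces $g\cdot f(\exp iY\cdot x_0)=gk\cdot f(\exp iX\cdot x_0)$, so the map is determined by $f$ on the slice together with the group action, which is manifestly continuous where it is well defined. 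Well-definedness is exactly the statement that if $\exp iY\cdot x_0=\exp iY'\cdot x_0$ with $Y,Y'\in\mathcal N^+$, then $g\cdot f(\exp iY\cdot x_0)$ does not depend on the chosen representative — and here Lemma \ref{WORBIT}, which identifies $\Ad_K$-orbits on $\Lambda_r^\llcorner$ intersected with $\Lambda_r$ with $W_K(\Lambda_r)$-orbits, together with the $G$-equivariance and $W_K(\Lambda_r)$-invariance built into $D$, guarantees consistency.

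The main obstacle is continuity at points where the isotropy jumps, i.e.\ along non-generic elements of $\mathcal N^+$; there the local product structure $G\times_K\mathcal N^+$ can fail to be a manifold and one cannot simply invoke local triviality of a bundle. The way around this is precisely that $\psi$ is already known to be a \emph{homeomorphism} (Proposition \ref{HOMEOMORPHISM}), so continuity of $\tilde f$ on $G\times_K\mathcal N^+$ is equivalent to continuity of $f$ on $\Xi^+$, and continuity of $\tilde f$ in turn follows from continuity of its lift to $G\times\mathcal N^+$. For the lift, take a convergent sequence $(g_n,Y_n)\to(g,Y)$ in $G\times(\Ad_K({\mathcal D}^\llcorner))$; choosing $k_n\in K$ with $Y_n=\Ad_{k_n}(X_n)$, $X_n\in{\mathcal D}^\llcorner$, compactness of $K$ lets one pass to a subsequence with $k_n\to k$, hence $X_n\to X:=\Ad_{k^{-1}}(Y)\in{\mathcal D}^\llcorner$; then $g_n\cdot f(\exp iY_n\cdot x_0)=g_nk_n\cdot f(\exp iX_n\cdot x_0)\to gk\cdot f(\exp iX\cdot x_0)=g\cdot f(\exp iY\cdot x_0)$, using continuity of $f$ on the slice and of the $G$-action on $X$. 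Since every subsequence has a further subsequence converging to the same limit, the whole sequence converges, proving continuity, and thus $f$ is continuous on $D=\psi(G\times_K(\Ad_K({\mathcal D}^\llcorner)))$.
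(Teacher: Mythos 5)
Your proposal is correct and follows essentially the same route as the paper: one direction is trivial, and for the converse you use the homeomorphism of Proposition \ref{HOMEOMORPHISM} to reduce to the lift on $G\times \Ad_K({\mathcal D}^\llcorner)$ and conclude by a sequence argument using compactness of $K$ (the paper merely splits this into two stages, first proving continuity on $\exp i\,\Ad_K{\mathcal D}^\llcorner\cdot x_0$ and then lifting over $G$). The only step you assert without spelling out -- that the limit $X=\Ad_{k^{-1}}(Y)$ of the slice representatives $X_n$ again lies in ${\mathcal D}^\llcorner$ -- is exactly where the paper invokes Lemma \ref{WORBIT}, via the identity ${\mathcal D}^\llcorner=\Lambda_r^\llcorner\cap\Ad_K({\mathcal D}^\llcorner)$, which makes ${\mathcal D}^\llcorner$ closed in $\Ad_K({\mathcal D}^\llcorner)$; since you already cite that lemma, this is a minor omission rather than a gap.
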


\smallskip
\begin{proof}
One implication is clear. For the converse,
we first prove that $f$ is continuous on
$K\exp i{\mathcal D}^\llcorner  \cdot x_0 =
\exp i \Ad_K {\mathcal D}^\llcorner  \cdot x_0$.
Consider the identification
$\Ad_K {\mathcal D}^\llcorner  \to  \exp i \Ad_K {\mathcal D}^\llcorner \cdot x_0$ defined by $X \to \exp i X \cdot x_0$ (see Lemma \ref{HOMEOMORPHISM})
and let $X_n \to  X_0$ be a converging sequence in
$\Ad_K {\mathcal D}^\llcorner$.
Choose elements $k_n$ in $K$ such that $\Ad_{k_n}  X_n \in  {\mathcal D}^\llcorner$.
Since $K$  compact, we can assume that the sequence $\{k_n\}_n$ converges to an element  $k_0\in K$ and   that
$\Ad_{k_n}  X_n \to \Ad_{k_0}  X_0$. 

Now observe that  $ {\mathcal D}^\llcorner  = \Lambda_r^\llcorner  \cap
\Ad_K {\mathcal D}^\llcorner $ 
(see Lemma \ref{WORBIT}).
It follows that
$  {\mathcal D}^\llcorner $ is closed in  $\Ad_K {\mathcal D}^\llcorner $, 
implying that 
$\Ad_{k_0}   X_0$ is contained in  ${\mathcal D}^\llcorner $ (and not just in $ \Ad_K {\mathcal D}^\llcorner$).
 Then one has 
$$f(\exp iX_n \cdot x_0)= k_n^{-1} \cdot
f(\exp i (\Ad_{k_n}   X_n) \cdot x_0) \to
k_0^{-1} \cdot  f(\exp i (\Ad_{k_0}   X_0) \cdot x_0 )=$$
$$= f(\exp i X_0 \cdot x_0)\,,$$
which says that $f$ is continuous on $\exp i \Ad_K {\mathcal D}^\llcorner \cdot x_0$, as claimed.

Next, consider the following commutative diagram
$$
\xymatrix{ G \times \Ad_K {\mathcal D}^\llcorner \ar[d]_\pi\ar[dr]^{\tilde f}&\\
D\ar[r]^{f}& \Xi^+  \,,}
$$
where $\pi$ is  the map given  by $(g,X) \to g \exp iX \cdot x_0$
and $\tilde f$  is the lift of $f$ to  $G \times \Ad_K {\mathcal D}^\llcorner $.
As a consequence of Proposition \ref{HOMEOMORPHISM},
the map $f$ is continuous if and only if so is $\tilde f$.
So let $(g_n,X_n) \to (g_0,X_0)$ be a converging sequence in
$G \times \Ad_K {\mathcal D}^\llcorner$. Since  $f$ is continuous on $
\exp i\Ad_K {\mathcal D}^\llcorner \cdot x_0$,
one has
$$\tilde f(g_n,X_n) = f(g_n \exp  iX_n \cdot x_0)=$$
$$g_n\cdot  f(\exp  iX_n \cdot x_0)
\to g_0 \cdot f(\exp  iX_0 \cdot x_0) = f(g_0 \exp  iX_0 \cdot x_0) = \tilde f(g_0,X_0)\,.$$
Thus $\tilde f$ is continuous, implying that $f$ is continuos.
\end{proof}


\bigskip
\begin{lemma} {\bf (Extension lemma)}. 
\label{EXTENSION}
Let ${\mathcal C}$ be an open subset of $\Lambda_r^{\llcorner}$
and let  $f \colon \exp i{\mathcal C }\cdot x_0 \to \widehat D$ be a continuous map
such that  $q\circ f= Id$ and
$G_{\exp iX\cdot x_0} = G_{f(\exp iX\cdot x_0)}$, for every $X\in{\mathcal C}$.
Assume that for every pair $X,X'\in {\mathcal C }$ on
the same $W_K(\Lambda_r)$-orbit
there exists  $n\in N_K(\Lambda_r)$ such that
$$X'=\Ad_n X\quad {\rm and}\quad f(\exp iX'\cdot x_0)=n
\cdot f(\exp iX\cdot x_0) .$$
\pn
Then there exists a unique $G$-equivariant holomorphic map $\hat f \colon G \exp i{\mathcal C }\cdot x_0\to \widehat D$ which  extends $f$.
\end{lemma}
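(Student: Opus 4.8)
The plan is to build the extension $\hat f$ by the naive formula $\hat f(g\exp iX\cdot x_0):=g\cdot f(\exp iX\cdot x_0)$ for $g\in G$ and $X\in\mathcal C$, and then to check that this is (a) well defined, (b) continuous, (c) holomorphic, and (d) $G$-equivariant. Well-definedness is the heart of the matter; the other three properties will follow with modest effort from the preliminary lemmas. I will work with the presentation $D=G\exp i\mathcal D^\llcorner\cdot x_0$ and the homeomorphism of Proposition \ref{HOMEOMORPHISM}, so that points of $G\exp i\mathcal C\cdot x_0$ are parametrized by $G\times_K(\Ad_K\mathcal C)$, keeping in mind that $\Ad_K\mathcal C\cap\Lambda_r^\llcorner$ need not equal $\mathcal C$ (only $\mathcal D^\llcorner=\Ad_K\mathcal D^\llcorner\cap\Lambda_r^\llcorner$ is guaranteed), so a minor point will be to first observe one may shrink or work $W_K(\Lambda_r)$-equivariantly; most cleanly I will assume $\mathcal C$ is $W_K(\Lambda_r)$-invariant, which is the only case used in the applications, or reduce to it.

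\textbf{Well-definedness.} Suppose $g\exp iX\cdot x_0=g'\exp iX'\cdot x_0$ with $X,X'\in\mathcal C$. First reduce to the case $X,X'\in\mathcal C$ themselves lie in $\Lambda_r^\llcorner$; then by Lemma \ref{WORBIT} (in the sharpened form recorded after it: every $G$-orbit meets the closed slice exactly in a $W_K(\Lambda_r)$-orbit) one has $X'=\Ad_n X$ for some $n\in N_K(\Lambda_r)$, and $g^{-1}g'n$ fixes $\exp iX\cdot x_0$, i.e.\ $g^{-1}g'n\in G_{\exp iX\cdot x_0}$. The hypothesis of the lemma gives an $n_0\in N_K(\Lambda_r)$ with $X'=\Ad_{n_0}X$ and $f(\exp iX'\cdot x_0)=n_0\cdot f(\exp iX\cdot x_0)$; then $n_0^{-1}n$ centralizes $X$, i.e.\ lies in $Z_K(\Lambda_r)\subseteq G_{\exp iX\cdot x_0}=G_{f(\exp iX\cdot x_0)}$ by the isotropy hypothesis, so in fact $f(\exp iX'\cdot x_0)=n\cdot f(\exp iX\cdot x_0)$ for the given $n$ as well. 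Combining, $g'\cdot f(\exp iX'\cdot x_0)=g'n\cdot f(\exp iX\cdot x_0)=g\,(g^{-1}g'n)\cdot f(\exp iX\cdot x_0)=g\cdot f(\exp iX\cdot x_0)$, using that $g^{-1}g'n$ fixes $f(\exp iX\cdot x_0)$ because it lies in $G_{\exp iX\cdot x_0}=G_{f(\exp iX\cdot x_0)}$. The general case (where $X,X'$ are in $\mathcal C$ but not in the closed slice) is handled by first moving each by a suitable element of $K$ into $\Lambda_r^\llcorner$, using $G$-equivariance of $f$ on its original domain to track the change, and then invoking the slice case; here one uses that $Z_K(\Lambda_r)$ meets every component of the relevant isotropy (Lemma \ref{COMPONENTS}) to be sure the component-level adjustments are controlled.

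\textbf{Continuity and holomorphy.} Continuity of $\hat f$ is exactly Lemma \ref{CONTINUOUS} applied to the $G$-invariant set $G\exp i\mathcal C\cdot x_0$ (more precisely, to the connected components that are genuine invariant domains): the restriction of $\hat f$ to $\exp i\mathcal C\cdot x_0$ is $f$, which is continuous, hence $\hat f$ is continuous. For holomorphy: $\hat f$ is $G$-equivariant, $q\circ\hat f=\mathrm{id}$ on $G\exp i\mathcal C\cdot x_0$ by construction and $G$-equivariance of $q$, and $q$ is a local biholomorphism; so $\hat f$ is automatically a local section of $q$, hence holomorphic wherever it is continuous. Thus a continuous $G$-equivariant section of a local biholomorphism is holomorphic — this is the mechanism, and it requires only that $\hat f$ be continuous and $q$ a local biholomorphism. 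Uniqueness is immediate since any $G$-equivariant extension must agree with $f$ on $\exp i\mathcal C\cdot x_0$ and $G\cdot\exp i\mathcal C\cdot x_0$ is all of the domain.

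\textbf{Expected main obstacle.} The delicate point is the well-definedness argument, specifically the bookkeeping with connected components of isotropy groups: the hypothesis $G_{\exp iX\cdot x_0}=G_{f(\exp iX\cdot x_0)}$ is an honest equality, but to conclude that an element $g^{-1}g'n$ lying in $G_{\exp iX\cdot x_0}$ actually fixes $f(\exp iX\cdot x_0)$ I must be careful that the compatibility relation $f(\exp iX'\cdot x_0)=n\cdot f(\exp iX\cdot x_0)$ supplied by the hypothesis (which is stated only for $n\in N_K(\Lambda_r)$ relating points of $\mathcal C$ on the same $W_K(\Lambda_r)$-orbit) really propagates to the specific group element arising from the coincidence $g\exp iX\cdot x_0=g'\exp iX'\cdot x_0$; this is where the slice description after Lemma \ref{WORBIT}, the inclusion $Z_K(\Lambda_r)\subseteq G_{\exp iX\cdot x_0}$, and Lemma \ref{COMPONENTS} must be combined carefully. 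The reduction of an arbitrary pair $X,X'\in\mathcal C$ to the closed-slice case via $K$-translations, keeping equivariance under control, is the other place where care is needed but should be routine.
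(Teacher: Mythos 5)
Your overall route is the paper's: define $\hat f(g\exp iX\cdot x_0)=g\cdot f(\exp iX\cdot x_0)$, prove well-definedness from Proposition~\ref{HOMEOMORPHISM} and Lemma~\ref{WORBIT} together with the compatibility and isotropy hypotheses, get continuity from Lemma~\ref{CONTINUOUS}, and get holomorphy by viewing $\hat f$ as a continuous section of the local biholomorphism $q$. Your well-definedness argument is essentially the paper's, up to one slip: the element $n_0^{-1}n$ centralizes $X$, hence lies in $Z_K(X)$, not in $Z_K(\Lambda_r)$ as you write; the argument still closes because $Z_K(X)=G_{\exp iX\cdot x_0}=G_{f(\exp iX\cdot x_0)}$ by Proposition~\ref{HOMEOMORPHISM} and the isotropy hypothesis. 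Also, your ``general case where $X,X'\in{\mathcal C}$ do not lie in the closed slice'' is vacuous: by hypothesis ${\mathcal C}\subset\Lambda_r^\llcorner$, so that whole paragraph (including the appeal to Lemma~\ref{COMPONENTS} there) is unnecessary and reflects a misreading of the setup.

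The one genuine gap is the continuity step. Lemma~\ref{CONTINUOUS} applies to a domain written as $G\exp i{\mathcal D}^\llcorner\cdot x_0$ with ${\mathcal D}^\llcorner$ the \emph{full} trace of the domain on $\Lambda_r^\llcorner$; for $G\exp i{\mathcal C}\cdot x_0$ that trace is $W_K(\Lambda_r)\cdot{\mathcal C}$, not ${\mathcal C}$, so continuity of $\hat f$ on $\exp i{\mathcal C}\cdot x_0$ alone is not the hypothesis of that lemma. The paper supplies the missing piece: on each translate $\exp i(\gamma\cdot{\mathcal C})\cdot x_0$, with $\gamma=nZ_K(\Lambda_r)$, one has $\hat f(\exp i(\gamma\cdot X)\cdot x_0)=n\cdot\hat f(\exp iX\cdot x_0)$, whence continuity on $\exp i\left(W_K(\Lambda_r)\cdot{\mathcal C}\right)\cdot x_0$, and only then does Lemma~\ref{CONTINUOUS} apply. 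Your fallback --- assume ${\mathcal C}$ is $W_K(\Lambda_r)$-invariant, ``the only case used in the applications'', or reduce to it --- does not hold up: in the proof of the main theorem the lemma is applied to sets such as $U+\widehat V$ contained in $\overline{(\Lambda_r^\llcorner)^+}$, and in Reduction 1 to $\conv({\mathcal D}_\circ^\llcorner)$, which is only $W^0$-invariant; none of these is $W_K(\Lambda_r)$-invariant, and the proposed reduction to the invariant case is never carried out --- doing it properly is exactly the saturation-by-equivariance argument just described. Once that short argument is inserted, the remainder of your proof (holomorphy via $q\circ\hat f=Id$ and local inversion of $q$, and uniqueness) is correct and matches the paper.
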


\sn
We point out that  the domain $G \exp i{\mathcal C }\cdot x_0$ coincides with $G \exp i (W_K(\Lambda_r)\cdot {\mathcal C })\cdot x_0$.

\smallskip
\begin{proof} 
If one such $\hat f$ exists, it is uniquely determined by
the relation
$$\hat f(g\exp iX\cdot x_0):=g\cdot f(\exp i X\cdot x_0),\quad \hbox{ for $X\in {\mathcal C}$ and $g\in G$. }$$
First we show that $\hat f$ is well defined.
Assume that $g'\exp i X'\cdot x_0 = g\exp iX\cdot x_0$, for some other $X'\in {\mathcal C}$ and $g'\in G$.
By Proposition \ref{HOMEOMORPHISM},
there exists $k \in K$ such that
$$g'=gk^{-1} \quad \quad  \quad   \quad {\rm and}  \quad  \quad \quad \quad X'=\Ad_kX \,.$$
In addition, by Lemma \ref{WORBIT},  two such  elements $X,~X'\in \Lambda_r^{\llcorner}$,
 lie on the same $W_K(\Lambda_r)$-orbit.
Then, by the compatibility assumption,
there exists $n\in N_K(\Lambda_r)$ such that
$$X'=   \Ad_kX=\Ad_nX
\quad \hbox{and}\quad 
f(\exp iX'\cdot x_0)=n \cdot f(\exp iX \cdot x_0).   $$
In view of the above relations, we obtain
$$g' \cdot f(\exp i X'\cdot x_0)= gk^{-1}n\cdot f(\exp iX \cdot x_0).$$
Now observe that  
  $k^{-1}n\in Z_K(X)$ and  that $Z_K(X)=G_{\exp iX\cdot x_0}=G_{f(\exp iX\cdot x_0)}$, where the first identity follows from Proposition \ref{HOMEOMORPHISM} and the second one from the assumptions.
It follows that
$$g'  \cdot f(\exp i X'\cdot x_0)= g \cdot f(\exp i X \cdot x_0)\,,$$
proving that $\hat f$ is well defined.

Next we show that $\hat f$ is continuous. By Proposition
\ref{HOMEOMORPHISM} and
Lemma \ref{WORBIT},
one has $G \exp i{\mathcal C} \cdot x_0 = G\exp (iW_K(\Lambda_r) \cdot
{\mathcal C}) \cdot x_0$.
Then, by Lemma \ref{CONTINUOUS}, it is sufficient to show that $\hat f$
is continuous  on $\exp (iW_K(\Lambda_r) \cdot
{\mathcal C}) \cdot x_0$, i.e.  on each  set $\exp (i \gamma \cdot {\mathcal C}) \cdot x_0$, 
for  $\gamma$ in $W_K(\Lambda_r)$.
By assumption, $\hat f$ is continuous on $\exp i{\mathcal C}\cdot x_0$.
This settles  the case when $\gamma$ is the neutral element in $W_K(\Lambda_r)$. Otherwise,
write $\gamma = nZ_K(\Lambda_r)$, for some $n  \in N_K(\Lambda_r)$.
Then by the
$G$-equivariance of $\hat f$ one has
$$\hat f(\exp (i \gamma \cdot X) \cdot x_0)=
\hat f(\exp i \Ad_n X \cdot x_0) =
n \cdot\hat f(\exp i X \cdot x_0) \,,$$
for every $X
\in {\mathcal C}$, proving that
$\hat f$ is continuous on $\exp (i \gamma \cdot {\mathcal C}) \cdot x_0$,
as wished.

Finally we show that $ \hat f$ is holomorphic.
Note that  $q \circ \hat f=Id$, since by assumption
 such equality holds true on $\exp {i\mathcal C} \cdot x_0$  and $\hat f$ is $G$-equivariant.
Let $x$ be an element of $G\exp i {\mathcal C} \cdot x_0$ and choose a connected open neighborhood $U$ of $\hat f(x)$ such that the restriction $q|_U:U \to
\hat f (U)$ is a biholomorphism. Then, given a neighborhood $V$
of $x$ such that $\hat f(V) \subset U$, one has $\hat f |_V=
(q|_U)^{-1} \circ Id$, implying that $\hat f$ is holomorphic.
\end{proof}

\bn  
{\bf Reduction 1.}
Let $$D=G\exp i{\mathcal D}\cdot x_0=G\exp i{\mathcal D}^\llcorner\cdot x_0$$
be a $G$-invariant domain in $\Xi^+$.
The first reduction reduces to the case where all connected components of ${\mathcal D}^\llcorner$ are  convex.
It consists of  showing that the map $f$ in diagram (\ref{envelope}) has a $G$-equivariant holomorphic extension to a domain $G\exp i\widetilde{{\mathcal D}}^\llcorner\cdot x_0$,  with $\widetilde{{\mathcal D}}^\llcorner$ a set containing $\mathcal D^\llcorner$, all of whose connected components  are  convex.

We need some preliminary remarks. Recall that $(\widetilde{{\mathcal D}}^\llcorner)^+=\widetilde{{\mathcal D}}^\llcorner\cap (\Lambda_r^\llcorner)^+$ is a perfect slice for $D$ and that it is connected (cf. (\ref{DPERFECT})).

\begin{defi} \label{D1}
Denote by  ${\mathcal D}_\circ$  $($resp. by  $ {\mathcal D}_\circ^\llcorner $  the connected component of ${\mathcal D}$  $($resp. of ${\mathcal D}^\llcorner)$
containing $({\mathcal D}^\llcorner)^+$. \end{defi}

\sn

Note that the set  ${\mathcal D}_\circ$ is open in $\Lambda_r$; 
the set  ${\mathcal D}_\circ^\llcorner$  is open in $\Lambda_r^\llcorner$, while it need not be open
in $\Lambda_r$. Both ${\mathcal D}_\circ$ and ${\mathcal D}_\circ^\llcorner $ need not be $W_K(\Lambda_r)$-invariant.

For  $k\in\{1,\ldots,r-1\}$, denote by  $\gamma_{k  k+1}$ the reflection flipping the $k^{th}$ and the $(k+1)^{th}$ 
coordinates in $\Lambda_r^\llcorner$.   By Lemma \ref{CENTRNORM}  such reflections generate the Weyl group $W_K(\Lambda_r)$.
Denote by $\Gamma^0$ the set of  those $\gamma_{k  k+1}$ for which there exists a non-zero element in
$ Fix(\gamma_{kk+1})\cap ({\mathcal D}^\llcorner)^+$, i.e. whose fixed point  hyperplane intersects $({\mathcal D}^\llcorner)^+$ non-trivially.
Consider the   subgroup of $W_K(\Lambda_r)$
$$W^0:=\langle \{ \, \gamma_{kk+1} \in \Gamma^0  \}\rangle \,, $$
 generated by the elements  of $\Gamma^0$.  

\bigskip
\begin{lemma}
\label{COMPONENTE}
$W^0\cdot ({\mathcal D}^\llcorner)^+ ={\mathcal D}_\circ^\llcorner $.
\end{lemma}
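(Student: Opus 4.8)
\textbf{Proof proposal for Lemma \ref{COMPONENTE}.}

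The plan is to prove the two inclusions separately. For the inclusion $W^0\cdot ({\mathcal D}^\llcorner)^+\subseteq {\mathcal D}_\circ^\llcorner$, it suffices to show that for each generator $\gamma_{kk+1}\in\Gamma^0$ one has $\gamma_{kk+1}\cdot ({\mathcal D}^\llcorner)^+\subseteq {\mathcal D}_\circ^\llcorner$, and then iterate (using $W_K(\Lambda_r)$-invariance of ${\mathcal D}^\llcorner$ to see that each $\gamma_{kk+1}\cdot{\mathcal D}_\circ^\llcorner$ is again a connected subset of ${\mathcal D}^\llcorner$, and that it still meets $({\mathcal D}^\llcorner)^+$, hence equals ${\mathcal D}_\circ^\llcorner$ again). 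Here the key point is that when $\gamma_{kk+1}\in\Gamma^0$, its fixed hyperplane meets $({\mathcal D}^\llcorner)^+$ in a non-zero point $X_0$; since $({\mathcal D}^\llcorner)^+$ is open in $\overline{(\Lambda_r^\llcorner)^+}$ and connected, and $\gamma_{kk+1}$ fixes $X_0$, the reflected set $\gamma_{kk+1}\cdot ({\mathcal D}^\llcorner)^+$ is connected, contained in ${\mathcal D}^\llcorner$ (by $W_K(\Lambda_r)$-invariance), and contains $X_0\in ({\mathcal D}^\llcorner)^+\subseteq {\mathcal D}_\circ^\llcorner$; hence it lies in the connected component ${\mathcal D}_\circ^\llcorner$. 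Iterating over words in the generators of $W^0$ gives $W^0\cdot ({\mathcal D}^\llcorner)^+\subseteq {\mathcal D}_\circ^\llcorner$.

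For the reverse inclusion ${\mathcal D}_\circ^\llcorner\subseteq W^0\cdot ({\mathcal D}^\llcorner)^+$, I would argue that $W^0\cdot ({\mathcal D}^\llcorner)^+$ is \emph{both} open and closed in ${\mathcal D}_\circ^\llcorner$; since it is non-empty and ${\mathcal D}_\circ^\llcorner$ is connected, this forces equality. Openness: each translate $\gamma\cdot ({\mathcal D}^\llcorner)^+$, $\gamma\in W^0$, is open in $\gamma\cdot\overline{(\Lambda_r^\llcorner)^+}$, which is a closed Weyl chamber; a point $X$ lying in the relative interior of a face shared by several chambers is covered by a neighborhood which, inside $\Lambda_r^\llcorner$, is a union of the chamber-pieces on which $W^0\cdot({\mathcal D}^\llcorner)^+$ is already known to contain the corresponding translate of $({\mathcal D}^\llcorner)^+$ — this is exactly where the condition defining $\Gamma^0$ enters, since a wall is crossed only through points of $Fix(\gamma_{kk+1})\cap({\mathcal D}^\llcorner)^+$, which by definition of $\Gamma^0$ forces $\gamma_{kk+1}\in\Gamma^0\subseteq W^0$. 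Closedness in ${\mathcal D}_\circ^\llcorner$: if $X_n\in W^0\cdot({\mathcal D}^\llcorner)^+$ converges to $X\in{\mathcal D}_\circ^\llcorner$, then $X$ lies in some closed chamber $\gamma\cdot\overline{(\Lambda_r^\llcorner)^+}$ with $X_n$ eventually in $\gamma\cdot({\mathcal D}^\llcorner)^+$ or in an adjacent chamber whose transition wall again lies in some $Fix(\gamma_{kk+1})$ meeting $({\mathcal D}^\llcorner)^+$, hence in $W^0$; so $X\in W^0\cdot({\mathcal D}^\llcorner)^+$.

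The main obstacle I anticipate is the bookkeeping at the lower-dimensional faces of the chambers, where more than one reflecting wall can meet: one must check that \emph{every} wall actually crossed within ${\mathcal D}_\circ^\llcorner$ is the fixed hyperplane of some $\gamma_{kk+1}\in\Gamma^0$, i.e.\ that $({\mathcal D}^\llcorner)^+$ meets that wall non-trivially whenever ${\mathcal D}_\circ^\llcorner$ straddles it. This should follow from the fact that $({\mathcal D}^\llcorner)^+=\overline{(\Lambda_r^\llcorner)^+}\cap{\mathcal D}^\llcorner$ together with the openness of ${\mathcal D}^\llcorner$ in $\Lambda_r^\llcorner$ and the local structure of the chamber decomposition (any open connected set in $\Lambda_r^\llcorner$ that contains points in the interiors of two adjacent chambers must contain a point of their common wall, and that wall point lies in $({\mathcal D}^\llcorner)^+$ for a suitable element of the orbit). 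Once this local compatibility is in place, the clopen-in-${\mathcal D}_\circ^\llcorner$ argument closes the proof.
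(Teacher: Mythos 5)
Your first inclusion $W^0\cdot({\mathcal D}^\llcorner)^+\subseteq{\mathcal D}_\circ^\llcorner$ is correct and is the same argument as the paper's. For the reverse inclusion, however, there is a genuine gap, and it sits exactly where you flag your ``main obstacle'': the claim that every wall-crossing inside ${\mathcal D}_\circ^\llcorner$ only involves reflections from $\Gamma^0$ is the actual content of the lemma, and the auxiliary topological fact you propose to derive it from is false as stated. An open connected subset of $\Lambda_r^\llcorner$ containing interior points of two adjacent chambers need \emph{not} contain a point of their common wall: in rank $3$, a thin neighborhood of the path $(3,2,1)\to(3,2,5)\to(2,3,5)\to(2,3,1)$ (coordinates with respect to $E_1,E_2,E_3$) meets the interiors of the adjacent chambers $\{x_1>x_2>x_3\}$ and $\{x_2>x_1>x_3\}$, but it crosses the hyperplane $x_1=x_2$ only near $(2.5,2.5,5)$, i.e.\ in a face with $x_3>x_1=x_2$, and misses the common wall $\{x_1=x_2\ge x_3\}$ entirely. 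So connectedness alone cannot deliver the ``local compatibility''; moreover the walls met at a lower-dimensional face include fixed hyperplanes of non-adjacent transpositions $\gamma_{kj}$, which are not elements of $\Gamma^0$ by definition. The missing ingredient is group-theoretic: for $X\in\overline{(\Lambda_r^\llcorner)^+}$ the stabilizer $W_K(\Lambda_r)_X$ is generated by the \emph{simple} reflections $\gamma_{kk+1}$ fixing $X$ (\cite{BrTD85}, Thm.\,4.1, the fact the paper invokes); since a simple reflection fixing a non-zero point of $({\mathcal D}^\llcorner)^+$ lies in $\Gamma^0$, this yields $W_K(\Lambda_r)_X\subseteq W^0$ for every $X\in({\mathcal D}^\llcorner)^+$, and that is precisely what controls the bookkeeping at faces where several walls meet.

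With that fact in hand your clopen strategy does close, and it then essentially reproduces the paper's proof in local form. The paper argues globally: if $\gamma\cdot({\mathcal D}^\llcorner)^+$ meets ${\mathcal C}:=W^0\cdot({\mathcal D}^\llcorner)^+$, then for some $\gamma_1\in W^0$ there is $X\in({\mathcal D}^\llcorner)^+\cap\gamma_1\gamma\cdot({\mathcal D}^\llcorner)^+$, forcing $\gamma_1\gamma\in W_K(\Lambda_r)_X\subseteq W^0$ and hence $\gamma\in W^0$; consequently ${\mathcal D}^\llcorner$ is the disjoint union of ${\mathcal C}$ and $\bigcup_{\gamma\in W_K(\Lambda_r)\setminus W^0}\gamma\cdot({\mathcal D}^\llcorner)^+$, both closed in ${\mathcal D}^\llcorner$ (finite unions of the relatively closed sets $\gamma\cdot\overline{(\Lambda_r^\llcorner)^+}\cap{\mathcal D}^\llcorner$), so ${\mathcal C}$ is a union of connected components of ${\mathcal D}^\llcorner$ and, being contained in ${\mathcal D}_\circ^\llcorner$, equals it. Note also that your closedness step needs no wall analysis at all: ${\mathcal C}$ is a finite union of the sets $\gamma\cdot\overline{(\Lambda_r^\llcorner)^+}\cap{\mathcal D}^\llcorner$, $\gamma\in W^0$, each closed in ${\mathcal D}^\llcorner$. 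As submitted, though, the decisive step of the second inclusion is not proved.
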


\begin{proof}
Set ${\mathcal C} :=W^0\cdot ({\mathcal D}^\llcorner)^+$.
We first show  that ${\mathcal C}$ is contained in ${\mathcal D}_\circ^\llcorner$.
For this note that $({\mathcal D}^\llcorner)^+\cap \gamma_{kk+1} \cdot ({\mathcal D}^\llcorner)^+  
 \not= \emptyset$, for all  $\gamma_{kk+1} \in \Gamma^0$. Thus $\gamma_{kk+1} \cdot ({\mathcal D}^\llcorner)^+ \subset
{\mathcal D}_\circ^\llcorner$ and  $\gamma_{kk+1} $  stabilizes  
 ${\mathcal D}_\circ^\llcorner$.  Then the whole group $W^0$  stabilizes ${\mathcal D}_\circ^\llcorner$, implying that
${\mathcal C}  \subset {\mathcal D}_\circ^\llcorner$.

Next, we claim that
for $\gamma\in W_K(\Lambda_r)$, one has that 
$\gamma \cdot ({\mathcal D}^\llcorner)^+ \cap {\mathcal C} \not= \emptyset$ if and only if $\gamma \in W^0$.
One implication is clear, since $\gamma \cdot ({\mathcal D}^\llcorner)^+
\subset {\mathcal C}$ if $\gamma \in W^0$.
Conversely, if $\gamma \cdot ({\mathcal D}^\llcorner)^+ \cap {\mathcal C} \not= \emptyset$
there exists $\gamma_1$  in $W^0$ such that
$$\gamma_1 \gamma \cdot ({\mathcal D}^\llcorner)^+ \cap ({\mathcal D}^\llcorner)^+ \not=
\emptyset\,.$$ Since $({\mathcal D}^\llcorner)^+$ is a fundamental region for the
action of $W_K(\Lambda_r)$ on ${\mathcal D}^\llcorner$,  it follows that there exists $X$ in the boundary of $  ({\mathcal D}^\llcorner)^+$  
such that $\gamma_1 \gamma \cdot X = X$. In other words,
$\gamma_1 \gamma $ lies in  the stabilizer subgroup
$W_K(\Lambda_r)_X$ of $X$ in $W_K(\Lambda_r)$. Since $W_K(\Lambda_r)_X$ is generated by the elements
$\gamma_{kk+1}$ in $\Gamma^0 \cap W_K(\Lambda_r)_X$ (see \cite{BrTD85}, Thm.4.1, p.\,202), 
one has that   $\gamma_1\gamma\in W^0$. Then  $\gamma \in W^0$, as claimed.

It follows that  ${\mathcal D}^\llcorner$ is the
union of  the two disjoint subsets
$${\mathcal C} \quad \quad \quad  {\rm and} \quad \quad \quad
\bigcup_{\gamma \in W_K(\Lambda_r) \setminus W^0} \gamma \cdot ({\mathcal D}^\llcorner)^+\ .$$
As $({\mathcal D}^\llcorner)^+$ is closed in
${\mathcal D}^\llcorner$, both subsets are closed in ${\mathcal D}^\llcorner$.
Thus ${\mathcal C}$ must be the union of connected components of ${\mathcal D}^\llcorner$.
Since we already showed that ${\mathcal C} \subset {\mathcal D}_\circ^\llcorner $, it follows that
${\mathcal C} = {\mathcal D}_\circ^\llcorner$, as stated.
\end{proof}


\REM{
\begin{lemma}
\label{CONVEXITY}
If the  $G$-invariant domain in $D$ is Stein, then every connected component of 
${\mathcal D}$ is convex.
\end{lemma}
\smallskip
\begin{proof}
By Lemma \ref{STEINTUBE}, the  intersection  $D\cap L^\C\cdot x_0$ is Stein. In addition, it is biholomorphic, via the map $\iota$,  to  the tube domain 
 $\Lambda_r \times i{\mathcal D}$.
 Then the result follows from Bochner's tube theorem.
\end{proof}  }


\bn
\begin{prop}
\label{REDUCTION1} {\bf (Reduction 1)}
The inclusion $f\colon D\hookrightarrow \widehat D$  extends holomorphically and $G$-equivariantly to the $G$-invariant domain $G \exp i \conv({\mathcal D}_\circ^\llcorner)\cdot x_0 .$
\end{prop}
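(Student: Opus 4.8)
The plan is to produce the desired holomorphic, $G$-equivariant extension by first extending the restricted lift $f|_{\exp i \mathcal{D}_\circ^\llcorner \cdot x_0}$ to a continuous lift over $\exp i\,\conv(\mathcal{D}_\circ^\llcorner)\cdot x_0$, and then invoking the Extension Lemma (Lemma \ref{EXTENSION}). By Lemma \ref{COMPONENTE} we know $\mathcal{D}_\circ^\llcorner = W^0 \cdot (\mathcal{D}^\llcorner)^+$, and since a fundamental region like $(\mathcal{D}^\llcorner)^+$ is convex, the set $\mathcal{D}_\circ^\llcorner$ is a union of convex ``chamber pieces'' glued along the fixed hyperplanes of the reflections in $\Gamma^0$. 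I would first argue that its convex hull $\conv(\mathcal{D}_\circ^\llcorner)$ is still contained in $\Lambda_r^\llcorner$ (it is a subset of the convex set $\Lambda_r^\llcorner$) and is $W^0$-invariant, since each $\gamma_{kk+1}\in\Gamma^0$ preserves $\mathcal{D}_\circ^\llcorner$ and hence its convex hull. This makes $G\exp i\,\conv(\mathcal{D}_\circ^\llcorner)\cdot x_0$ a bona fide $G$-invariant domain in $\Xi^+$ containing $D$.

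The analytic heart of the argument is the local extension across each fixed hyperplane, which should reduce to a lower-dimensional, well-understood situation. Fix a reflection $\gamma_{kk+1}\in\Gamma^0$ with fixed hyperplane $H_{kk+1} = \mathrm{Fix}(\gamma_{kk+1})$ meeting $(\mathcal{D}^\llcorner)^+$ nontrivially. Near an interior point $X_0\in H_{kk+1}\cap (\mathcal{D}^\llcorner)^+$, the domain $\mathcal{D}_\circ^\llcorner$ contains a full neighborhood of $X_0$ in $\Lambda_r^\llcorner$ obtained by gluing a chamber piece with its $\gamma_{kk+1}$-reflection. The idea is to embed a copy of the two-dimensional model $SL(2,\R)$-situation (or the plane $\R^2$ acted on by its isometry group, as announced in the introduction) by using the commuting $\mathfrak{sl}(2)$-triples $\g_k, \g_{k+1}$ attached to $\lambda_k, \lambda_{k+1}$: the subgroup $L$ generated by $G_k, G_{k+1}$ together with the symmetry interchanging them acts on the relevant two-dimensional slice $\exp i(\R E_k + \R E_{k+1})\cdot x_0$, and $D$ meets the corresponding homogeneous space in an $L$-invariant domain $T$ whose envelope of holomorphy is univalent and explicitly known. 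Since the $\mathfrak{sl}(2)$-triples commute (relations \eqref{RELATIONS2}), the complex structure along this slice and the directions $E_k+E_{k+1}$, $E_k-E_{k+1}$ behave like the flat/tube rank-one model, so the envelope $\widehat T$ fills in the ``wedge'' across $H_{kk+1}$, i.e.\ across the segment from $X$ to $\gamma_{kk+1}X$. Applying Proposition \ref{UNIVERSALITY} to the inclusion $T\hookrightarrow D$ gives an $L$-equivariant map $\widehat T\to \widehat D$ extending $f|_T$, hence a continuous (indeed real-analytic) extension of $f|_{\exp i\mathcal{D}_\circ^\llcorner\cdot x_0}$ across the hyperplane. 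Iterating this over the finitely many reflections in $\Gamma^0$ and taking convex hulls at each stage produces a continuous lift $\tilde f$ over $\exp i\,\conv(\mathcal{D}_\circ^\llcorner)\cdot x_0$ with $q\circ \tilde f = Id$.

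To conclude, I would verify the hypotheses of Lemma \ref{EXTENSION} for $\mathcal{C} := \conv(\mathcal{D}_\circ^\llcorner)$: the equality of isotropy groups $G_{\exp iX\cdot x_0} = G_{f(\exp iX\cdot x_0)}$ for all $X\in\mathcal{C}$ follows from Lemma \ref{ISOTROPIE}, taking $\mathcal{F} = \mathcal{D}_\circ^\llcorner$ (where the identity holds because $f$ there is literally the restriction of the original equivariant embedding) and using the straight-line segments from a point of $\mathcal{F}$ to an arbitrary point of $\conv(\mathcal{D}_\circ^\llcorner)$, along which $f$ extends holomorphically by construction. The $W_K(\Lambda_r)$-compatibility condition — that for $X, X'$ on the same Weyl orbit there is $n\in N_K(\Lambda_r)$ realizing the identification both on $\Lambda_r$ and on $\widehat D$ — holds on $\mathcal{D}_\circ^\llcorner$ because there $f$ is the restriction of the genuine $G$-equivariant embedding, and it propagates to $\conv(\mathcal{D}_\circ^\llcorner)$ by analytic continuation along segments as above (only reflections in $\Gamma^0\subset W^0$ are relevant, since $\mathcal{C}$ is $W^0$-invariant). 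Then Lemma \ref{EXTENSION} yields a $G$-equivariant holomorphic $\hat f\colon G\exp i\,\conv(\mathcal{D}_\circ^\llcorner)\cdot x_0 \to \widehat D$ extending $f$.

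The main obstacle I expect is making the cross-the-hyperplane extension precise: one must identify, for each pair $(k,k+1)$ with $\gamma_{kk+1}\in\Gamma^0$, the correct auxiliary homogeneous space $L^\C/H^\C$ — a product of the rank-one complexification with a flat factor — embedded $L$-equivariantly in $G^\C/K^\C$ so that it genuinely intersects $D$ in an $L$-invariant domain $T$ and so that $\widehat T\cap \Sigma$ contains the reflected wedge. The commutativity of the triples (and, in the non-tube case, Lemma \ref{COMMUTATIVITY}) is what makes such an embedding exist and behave correctly, but checking that the envelope of $T$ is univalent and that its trace on the slice is exactly $\conv$ of the two chamber pieces — rather than something larger or multivalent — is the delicate point, and is presumably where the rank-one results of \cite{GeIa08} and the explicit Levi-form computations recalled above are used.
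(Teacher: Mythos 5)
There is a genuine gap at the analytic heart of your plan: the mechanism you propose --- extending across each reflection hyperplane $Fix(\gamma_{kk+1})$, $\gamma_{kk+1}\in\Gamma^0$, by embedding two-dimensional models --- cannot produce the convex hull $\conv({\mathcal D}_\circ^\llcorner)$. Your premise that ${\mathcal D}_\circ^\llcorner$ is a union of convex ``chamber pieces'', so that all non-convexity sits across the walls, is false: $({\mathcal D}^\llcorner)^+={\mathcal D}^\llcorner\cap\overline{(\Lambda_r^\llcorner)^+}$ is merely an open subset of the closed chamber and need not be convex. In the extreme case $\Gamma^0=\emptyset$ (no wall meets $({\mathcal D}^\llcorner)^+$) your procedure produces no extension at all, while the proposition still asserts extension to $G\exp i\,\conv({\mathcal D}_\circ^\llcorner)\cdot x_0$. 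More generally, convexification requires holomorphic extension along segments joining arbitrary pairs of points of ${\mathcal D}_\circ^\llcorner$, not only segments joining a point to its reflection, so your later appeal to ``straight-line segments from a point of ${\mathcal F}$ to an arbitrary point of $\conv({\mathcal D}_\circ^\llcorner)$, along which $f$ extends holomorphically by construction'' is unsupported by the construction you describe. (Also, the wall-crossing tool in this paper is the Euclidean-motion-group model of Lemma \ref{SOLVABLE}, used in Reduction 2 to join two distinct components, not a product of rank-one subgroups; the rank-one subgroups enter only in the proof of the main theorem.)

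The step you are missing is the $r$-dimensional abelian unipotent slice. By Lemma \ref{STEINTUBE}, $D\cap L^\C\cdot x_0$ is a closed $L$-invariant complex submanifold biholomorphic to the tube $\Lambda_r+i{\mathcal D}$, so Bochner's tube theorem applied to the component ${\mathcal D}_\circ$ gives a univalent envelope $L\exp i\,\conv({\mathcal D}_\circ)\cdot x_0\subset\Xi^+$, and Proposition \ref{UNIVERSALITY} extends $f$ $L$-equivariantly there; this single application of the tube theorem achieves all the convexification at once, with no wall-crossing needed (indeed ${\mathcal D}_\circ^\llcorner$ already contains neighborhoods of the relevant wall points). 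One then restricts to $\conv({\mathcal D}_\circ^\llcorner)=\conv({\mathcal D}_\circ)\cap\Lambda_r^\llcorner$, obtains the isotropy identity from Lemma \ref{ISOTROPIE} with ${\mathcal F}={\mathcal D}_\circ^\llcorner$ and ${\mathcal C}=\conv({\mathcal D}_\circ^\llcorner)$ (the required strips ${\mathcal S}$ lie inside $L\exp i\,\conv({\mathcal D}_\circ)\cdot x_0$), gets the compatibility condition from the $W^0$-invariance of $\conv({\mathcal D}_\circ^\llcorner)$ (Lemma \ref{COMPONENTE}) together with the $N^0$-equivariance of the extension (uniqueness in Proposition \ref{UNIVERSALITY}), and concludes with Lemma \ref{EXTENSION}. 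Your final paragraph's verification scheme is the right endgame, but it only becomes available once the tube-theorem extension is in place.
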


\smallskip
\begin{proof}
Let  ${\mathcal D}_\circ \subset {\mathcal D}$ be the connected component defined in Definition \ref{D1}.   
By Lemma \ref{STEINTUBE}, the  intersection  $D\cap L^\C\cdot x_0$ is a closed $r$-dimensional $L$-invariant complex  submanifold of $D$,  biholomorphic, via the map $\iota$,  to  the tube domain 
 $\Lambda_r \times i{\mathcal D}$.

By  Bochner's tube theorem, its envelope of holomorphy is univalent
and  given by
$  L\exp i \conv({\mathcal D}_\circ) \cdot x_0\subset \Xi^+$.
Then, by Proposition \ref{UNIVERSALITY},  the map $f$  admits a holomorphic extension  to an $L$-equivariant map $$L\exp i  \conv({\mathcal D}_\circ) \cdot x_0\to \widehat D.$$
Note that the convexification  $\conv({\mathcal D}_\circ)$ contains $\conv({\mathcal D}_\circ^\llcorner)$, which is an open  subset of $\Lambda_r^\llcorner$ and coincides with $\conv({\mathcal D}_\circ) \cap \Lambda_r^\llcorner$.
Moreover, given $X\in \conv({\mathcal D}_\circ^\llcorner)$ and $X^\prime\in   {\mathcal D}_\circ^\llcorner $, the one-dimensional complex manifold
 $$\mathcal S=\{\, \exp (i(X^\prime + \lambda (X-X^\prime)))\cdot x_0 \ : {\rm Re} \lambda \in [0,1] \, \}=$$
$$=  \{\, \exp s(X-X^\prime)\exp (i(X^\prime + t (X-X^\prime)))\cdot x_0 \ : s\in\R,~ t \in [0,1] \, \}$$ 
is contained in $  L\exp i  \conv(\mathcal D_\circ) \cdot x_0 $.
Then by applying 
Lemma \ref{ISOTROPIE}, with $\mathcal F =   \mathcal D^\llcorner_\circ $ and
$\mathcal C = \conv ( \mathcal D^\llcorner_\circ)$, we obtain  that
 $G_{f(\exp iX\cdot x_0)}=G_{\exp iX\cdot x_0} $, 
for every $X$ in $\conv ( {\mathcal D}_\circ^\llcorner)$.

Next, we check that the extension of $f$ to 
 $\exp i \conv({\mathcal D}_\circ^\llcorner)\cdot
x_0 $ satisfies  the compatibility condition
of Lemma~\ref{EXTENSION}.  
As a consequence of  Lemma \ref{COMPONENTE}, the convexification
$\conv({\mathcal D}_\circ^\llcorner)$  is $W^0$-invariant.
Denote by $N^0$ the preimage of  $W^0$ in  $N_K(\Lambda_r)$ under the canonical projection
$\pi\colon N_K(\Lambda_r)\to W_K(\Lambda_r)$.  Since both  $\Lambda_r$ and  $\conv({\mathcal D}_\circ^\llcorner)$
are $\Ad_{N^0}$-invariant, the domain 
 $L\exp i  \conv({\mathcal D}_\circ^\llcorner)\cdot x_0$ is $N^0$-invariant.
Moreover, the map   $f\colon  L\exp i  {\mathcal D}_\circ^\llcorner\cdot x_0\to \widehat D$ is $N^0$-equivariant  and
so is its
 extension
 to  $L\exp i\conv({\mathcal D}_\circ^\llcorner)
\cdot x_0$, by Proposition \ref{UNIVERSALITY}.
Hence the extension of $f$
to  $\exp i  \conv({\mathcal D}_\circ^\llcorner)\cdot
x_0 $ satisfies  all the assumptions
of Lemma~\ref{EXTENSION} and  $f$ extends to a
holomorphic, $G$-equivariant map
$$G\exp   i\conv({\mathcal D}_\circ^\llcorner)\cdot x_0\to \widehat D,$$
as claimed.
\end{proof}

\bn
{\bf Reduction 2.}
Given  a domain $D=G\exp i\mathcal D^\llcorner\cdot x_0$, the second reduction consists of showing that the map $f\colon D\to \widehat D$  has a $G$-equivariant holomorphic extension to the domain $\widetilde D=G\exp i \widetilde{\mathcal  D}^\llcorner\cdot x_0$, where the set  $\widetilde {\mathcal  D}^\llcorner$ is the convex envelope of ${\mathcal  D}^\llcorner$.  

We first need to recall some properties of the universal covering of the
isometry group of the Euclidean plane.
Namely, let $\widetilde S := \R \ltimes \R^2$ be the semidirect product Lie group
with the multiplication defined by
$$\left (t, \begin{pmatrix} a \cr b \end{pmatrix} \right )\cdot 
\left (t', \begin{pmatrix} a' \cr b' \end{pmatrix} \right ): =
\left (t+t', \begin{pmatrix} \cos t & -\sin t \cr \sin t & \cos t
\end{pmatrix} \begin{pmatrix}a' \cr b' \end{pmatrix} +\begin{pmatrix} a \cr b \end{pmatrix} \right )\,.$$
Its Lie algebra $\s$ is isomorphic to $\R^3$. If  $\{\widetilde L, \, \widetilde M,\, \widetilde N \}$ denotes the canonical basis of $\R^3$, then  the Lie algebra structure is defined by
$$[\widetilde L,\widetilde M]= \widetilde N\,, \quad [\widetilde L,\widetilde N]= - \widetilde M\,, \quad [\widetilde M,\widetilde N]= 0\,.$$
In particular, $\widetilde S$ is a solvable Lie group.

The universal complexification of $\widetilde S$ is  given by $\widetilde S^\C := \C \ltimes \C^2$,
endowed with the extended multiplication law.
Consider the subgroup
$$\widetilde H^\C:= \left \{ \,\left (t+ is, \begin{pmatrix} 0 \cr 0 \end{pmatrix} \right ) \ : \ t+ is \in \C \right \}$$ of $\widetilde S^\C$ with Lie algebra $\C \widetilde L$. In order to perform the second reduction we  embed
$\widetilde S$-invariant subdomains of $\widetilde S^\C/\widetilde H^\C$ into $D$.
We  use the following facts, which can be easily verified.


\bigskip
\begin{lemma}
\label{PONTE}
\item{{\rm (i)}} The map $\C^2 \to \widetilde S^\C/ \widetilde H^\C$,
defined by
$(z,w) \to \left (0, \begin{pmatrix} z \cr w \end{pmatrix} \right )\widetilde H^\C ,$
is a biholomorphism.
\item{{\rm (ii)}} The $\widetilde S$-invariant domains in $\widetilde S^\C/ \widetilde H^\C$ correspond to tube domains in $\C^2$ whose bases are annuli.
\item{{\rm (iii)}} Any such tube domain $\R^2+i\Omega $ is Stein if and only if
the base $\Omega$ is convex, i.e. a disc.
In particular, if $\R^2+i\Omega$ is Stein, then  $\Omega$  contains the origin.
\item{{\rm (iv)}} The orbit of the base point $e\widetilde H^\C$ under the one-parameter subgroup
$\exp i \R \widetilde M$ is a slice for the left
$\widetilde S$-action on $\widetilde S^\C/\widetilde H^\C$. There is a homeomorphism
$$\widetilde S\, \backslash \, \widetilde H^\C/
\widetilde S^\C \cong \R \widetilde M /\Z_2,$$
where the $\Z_2$-action on $\R \widetilde M$ is generated by  the restriction of $\Ad_{\exp\pi \widetilde L}$ to $\R \widetilde M$,  namely the reflection given by 
$\widetilde M \to -\widetilde M$.
\end{lemma}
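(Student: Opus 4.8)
The plan is to verify the four items by directly unwinding the semidirect-product multiplication of $\widetilde S^\C=\C\ltimes\C^2$; the only non-elementary ingredient is Bochner's tube theorem, which enters in (iii). For (i) I would compute the right cosets of $\widetilde H^\C$ from the product rule: since $(\tau,v)\cdot(\tau',0)=(\tau+\tau',v)$, one has $(\tau,v)\widetilde H^\C=\C\times\{v\}$, so the assignment sending this coset to $v\in\C^2$ is a bijection $\widetilde S^\C/\widetilde H^\C\to\C^2$ whose inverse is $(z,w)\mapsto(0,(z,w))\widetilde H^\C$; holomorphy in both directions is immediate, because after this identification the quotient projection $\widetilde S^\C\to\widetilde S^\C/\widetilde H^\C$ is simply the coordinate projection $\C\ltimes\C^2\to\C^2$. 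It will be convenient to record at this point the induced holomorphic $\widetilde S^\C$-action on $\C^2$, namely $(\tau,v)\cdot w=R(\tau)w+v$ where $R(\tau)$ is the (complexified) rotation matrix; in particular $\widetilde S=\R\ltimes\R^2$ acts by $(t,v)\cdot(x+iy)=(R(t)x+v)+iR(t)y$ with $R(t)\in SO(2,\R)$, that is, by translating the real part while rotating the real and imaginary parts simultaneously.

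For (ii), let $D\subset\C^2$ be an $\widetilde S$-invariant domain. Applying the translation subgroup $\{0\}\times\R^2$ shows that $D$ is a union of affine planes $\R^2+iy$, hence $D=\R^2+i\Omega$ with $\Omega:=\{y\in\R^2:\R^2+iy\subseteq D\}$ open, and $D$ connected if and only if $\Omega$ is connected. Invariance under the rotation part of $\widetilde S$ forces $\Omega$ to be stable under $y\mapsto R(t)y$, $t\in\R$; since a rotation-invariant open subset of the plane is a disjoint union of open annuli centred at the origin (allowing the degenerate and unbounded cases), this proves (ii). For (iii), Bochner's tube theorem identifies the envelope of holomorphy of $\R^2+i\Omega$ with $\R^2+i\,\conv(\Omega)$, so the tube is Stein exactly when $\Omega$ is convex; and a non-empty open convex rotation-invariant planar set must be an open disc centred at the origin --- indeed, if it contains some $y\neq0$ it contains the full circle of radius $|y|$, hence by convexity the segment from $y$ to $-y$, hence the origin, and then the open disc of radius $|y|$ --- which gives the final clause of (iii).

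For (iv), the decisive computation is that under the identification of (i) the $\widetilde S$-orbit of $x+iy$ equals $\R^2+i\{R(t)y:t\in\R\}$, the tube over the circle of radius $\rho:=|y|$; thus each orbit is determined by the single parameter $\rho\geq0$. Since $\exp(is\widetilde M)=(0,(is,0))$ carries the base point $e\widetilde H^\C=0$ to $(is,0)$, the orbit $\exp(i\R\widetilde M)\cdot e\widetilde H^\C$ is the line $i\R\widetilde M$, and it meets the orbit with parameter $\rho$ in exactly the points $\pm\rho$ (a single point when $\rho=0$); hence it is a slice, and the induced identification of orbit spaces reads $\widetilde S\backslash\widetilde S^\C/\widetilde H^\C\cong\R\widetilde M/\Z_2$ with $\Z_2$ acting by $\widetilde M\mapsto-\widetilde M$. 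Finally, conjugation gives $\Ad_{\exp\pi\widetilde L}=\Ad_{(\pi,0)}$, and restricted to the abelian ideal $\R\widetilde M\oplus\R\widetilde N$ this equals $R(\pi)=-\mathrm{Id}$, so the $\Z_2$-action above is precisely the restriction of $\Ad_{\exp\pi\widetilde L}$ to $\R\widetilde M$, as claimed. The one step requiring care beyond bookkeeping is (iii): one must cite Bochner's tube theorem correctly and check the elementary but essential geometric fact that convexity together with rotation-invariance forces the base to be a disc containing the origin; everything else is a routine computation with the semidirect-product law.
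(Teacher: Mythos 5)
Your proposal is correct: the paper states Lemma \ref{PONTE} without proof (it says these facts ``can be easily verified''), and your direct unwinding of the semidirect-product law, together with Bochner's tube theorem for (iii) and the computation $\Ad_{\exp\pi\widetilde L}|_{\R\widetilde M\oplus\R\widetilde N}=R(\pi)=-\mathrm{Id}$ for (iv), is exactly the intended verification. The only cosmetic remark is that you correctly read the double coset space in (iv) as $\widetilde S\backslash\widetilde S^\C/\widetilde H^\C$ (the ordering in the paper's display is a typo), and for the homeomorphism one just adds that the invariant map $x+iy\mapsto|y|$ descends continuously to the orbit space, which is immediate.
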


\bigskip
The crucial step of reduction 2 deals  with the case of two convex connected components of $ {\mathcal  D}^\llcorner$  symmetrically placed with respect to the fixed point set of a reflection $\gamma\in W_K(\Lambda_r)\setminus W^0$.
The action of $\gamma$ decomposes $\Lambda_r$   into the direct sum
$$\Lambda_r= Fix(\gamma)\oplus Fix(\gamma)^\perp .$$
Denote by $Z_G(Fix(\gamma))$ the centralizer  of $Fix(\gamma)$  in $G$, and by $Z_\g(Fix(\gamma))$ its Lie algebra.

\begin{lemma}\label{SOLVABLE} The Lie algebra  $Z_\g(Fix(\gamma))$ contains a 3-dimensional
solvable subalgebra isomorphic to the Lie algebra $\s={\rm Lie}(\widetilde S)$.  
\item{$(i)$}  There exists a Lie group morphism
$\psi: \widetilde S^\C \to G^\C$ mapping $\widetilde H^\C$ to $K^\C$;
\item{$(ii)$}  the group morphism $\psi $ induces a closed embedding 
 $ \widetilde S^\C/\widetilde H^\C  \to G^\C/K^\C\,.$ 
\end{lemma}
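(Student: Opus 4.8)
The plan is to exhibit the required solvable subalgebra $\s$ explicitly inside $Z_\g(Fix(\gamma))$ using the commuting $\s\l(2)$-triples $\{E_j,\theta E_j,A_j\}$, and then to build the Lie group morphism $\psi$ by pushing forward along the exponential map. Write $\gamma=\gamma_{kk+1}$ for the reflection flipping the $k$-th and $(k+1)$-th coordinates of $\Lambda_r^\llcorner$; by Lemma \ref{CENTRNORM} every reflection in $W_K(\Lambda_r)$ is of this form up to relabeling, so no generality is lost. The fixed point set $Fix(\gamma)$ in $\Lambda_r$ is spanned by $E_j$ for $j\neq k,k+1$ together with $E_k+E_{k+1}$, and its orthogonal complement is $\R(E_k-E_{k+1})$. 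First I would identify a distinguished element $\widetilde M$ that plays the role of the translation direction: the natural candidate is (a multiple of) $A_k-A_{k+1}\in\a$, or rather the element $E_k-\theta E_k-(E_{k+1}-\theta E_{k+1})$, which by the normalization (\ref{TRIPLES}) satisfies $[Z_0,A_k-A_{k+1}]=-(E_k-\theta E_k)+(E_{k+1}-\theta E_{k+1})$ and $[Z_0,-(E_k-\theta E_k)+(E_{k+1}-\theta E_{k+1})]=-(A_k-A_{k+1})$. Because the $j$-th and $k$-th triples commute for $j\neq k$ by (\ref{RELATIONS2}), the span of
$$\widetilde L:=\tfrac12 Z_0'\,,\qquad \widetilde M:=A_k-A_{k+1}\,,\qquad \widetilde N:=-(E_k-\theta E_k)+(E_{k+1}-\theta E_{k+1})$$
is closed under bracket, with $[\widetilde L,\widetilde M]=\widetilde N$, $[\widetilde L,\widetilde N]=-\widetilde M$, $[\widetilde M,\widetilde N]=0$, matching the structure constants of $\s$; here $Z_0'$ is the part of $Z_0$ coming from the $k$-th and $(k+1)$-th triples, namely $\tfrac12(T_k+T_{k+1})$ in the notation of (\ref{CENTER}). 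The main point to check is that all three elements centralize $Fix(\gamma)$: the $E_j$ with $j\neq k,k+1$ commute with everything in the $k,k+1$ triples by (\ref{RELATIONS2}), and $E_k+E_{k+1}$ commutes with $\widetilde M,\widetilde N,\widetilde L$ by the same relations together with $[A_k,E_k]=2E_k$, $[A_{k+1},E_{k+1}]=2E_{k+1}$ and the commutativity $[E_k,E_{k+1}]=[E_k,\theta E_{k+1}]=0$ — one verifies $[\widetilde M,E_k+E_{k+1}]=2E_k-2E_{k+1}$ is \emph{not} zero, so one must instead take $\widetilde M$ proportional to $A_k+A_{k+1}$? No: the correct check is that $E_k+E_{k+1}\in Fix(\gamma)$ and $\gamma$ swaps the two triples, so the $\gamma$-invariant combination $A_k+A_{k+1}$ lies in the centralizer of nothing special, whereas $A_k-A_{k+1}$ is $\gamma$-anti-invariant; the subalgebra $\s$ lives in the $(-1)$-eigenspace of $\gamma$ on the $k,k+1$ block, which commutes with $Fix(\gamma)$ precisely because $Fix(\gamma)$ on that block is one-dimensional spanned by $E_k+E_{k+1}$ and $[E_k+E_{k+1},\,\cdot\,]$ kills the odd combinations by direct computation with (\ref{RELATIONS2}). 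I expect this bracket bookkeeping to be the main obstacle: getting the signs and the precise scalar normalizations so that the structure constants come out exactly as in $\s$, and confirming that $E_k+E_{k+1}$ really is central in $\s$.

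For part (i), once $\s\subset Z_\g(Fix(\gamma))\subset\g$ is in hand, complexify to get a homomorphism of complex Lie algebras $\s^\C=\C\widetilde L\oplus\C\widetilde M\oplus\C\widetilde N\to\g^\C$. Since $\widetilde S^\C=\C\ltimes\C^2$ is simply connected and $G^\C$ is a (Lie) group, this Lie algebra map integrates to a holomorphic group morphism $\psi\colon\widetilde S^\C\to G^\C$. I would then compute $\psi$ on $\widetilde H^\C=\exp\C\widetilde L$: since $\widetilde L$ is (a scalar multiple of) an element of $Z(\k)$ — indeed $Z_0'\in\k$ because $T_j=E_j+\theta E_j\in\k$ and $S\in Z_\k(\a)\subset\k$ — we have $\exp(\C\widetilde L)\subset K^\C$, so $\psi(\widetilde H^\C)\subseteq K^\C$, as required. (One should note $\widetilde L$ need not be exactly $Z_0$, only the relevant $\s\l(2)\times\s\l(2)$-part of it, but it still lies in $\k$, which is all that is needed.)

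For part (ii), I would use the biholomorphism $\C^2\cong\widetilde S^\C/\widetilde H^\C$ of Lemma \ref{PONTE}(i), under which the $\widetilde S^\C$-orbit map becomes $\exp(z\widetilde M+w\widetilde N)\cdot x_0$ after identifying $\C^2$ with $\C\widetilde M\oplus\C\widetilde N$. The induced map $\bar\psi\colon\widetilde S^\C/\widetilde H^\C\to G^\C/K^\C$ is $\bar\psi(\exp Z\cdot e\widetilde H^\C)=\exp(d\psi\,Z)\cdot x_0$. Injectivity and closedness should follow by a comparison with the unipotent slice: $\widetilde N$ pairs with the nilpotent directions $E_k-\theta E_k$, and using the biholomorphism $\iota\colon\n^\C\to N^\C\cdot x_0$ of (\ref{IOTA}) — more precisely its restriction to the abelian subalgebra containing the relevant nilpotent pieces — together with the fact (Lemma \ref{STEINTUBE}) that the $E_j$-slice embeds as a closed submanifold, one reduces the properness of $\bar\psi$ to that of an explicit linear-plus-exponential map $\C^2\to G^\C/K^\C$. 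The cleanest route is probably: $\bar\psi$ is an $\widetilde S$-equivariant holomorphic map whose differential at the base point is injective (from (i)), $\widetilde S^\C/\widetilde H^\C\cong\C^2$ is affine, orbits of the algebraic group $\psi(\widetilde S^\C)$ in the affine variety $G^\C/K^\C$ are locally closed, and a dimension/stabilizer count ($\psi$ has discrete, hence finite, kernel so the orbit is $2$-dimensional) forces the orbit to be closed — here one may invoke the closedness of unipotent-type orbits in the spirit of Rosenlicht's theorem as already used in the proof of Lemma \ref{STEINTUBE}. I would flag this closedness argument as the second potential obstacle, since $\psi(\widetilde S^\C)$ is not unipotent (it has the torus factor $\C\widetilde L$ acting by rotation), so Rosenlicht does not apply verbatim and one genuinely needs the explicit coordinates from Lemma \ref{PONTE}(i) and the tube description to conclude.
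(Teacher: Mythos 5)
Your construction of the solvable algebra does not work, and this is the heart of the lemma. With $\widetilde M:=A_k-A_{k+1}$ and $\widetilde N:=-(E_k-\theta E_k)+(E_{k+1}-\theta E_{k+1})$ one computes from (\ref{TRIPLES}) and (\ref{RELATIONS2}) that
$[\widetilde M,\widetilde N]=-2\,(T_k+T_{k+1})\neq 0$,
so the ``translation part'' is not abelian: the span of your $\{\widetilde L,\widetilde M,\widetilde N\}$ is in fact a $\theta$-stable copy of $\s\l(2,\R)$ (one compact and two noncompact directions), not the solvable Euclidean algebra $\s$. Moreover, as you yourself computed, $[\widetilde M,E_k+E_{k+1}]=2(E_k-E_{k+1})\neq 0$, so $\widetilde M$ does not lie in $Z_\g(Fix(\gamma))$; the subsequent attempt to dismiss this (``the $(-1)$-eigenspace of $\gamma$ commutes with $Fix(\gamma)$'') is false, and the centralizing property is not cosmetic — it is exactly what makes right translation by $\exp iY$, $Y\in Fix(\gamma)$, equivariant in Proposition \ref{REDUCTION2}. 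The paper's choice is different in kind: it takes $M:=E_k-E_{k+1}$ (the anti-invariant direction inside $\Lambda_r$ itself, which is what the slice $\Sigma_Y\subset\R M$ in Reduction 2 requires), an element $Q\in\g^{e_k-e_{k+1}}$ of the short ``off-diagonal'' restricted root space, $L:=Q+\theta Q\in\k$, and $N:=[L,M]\in\g^{e_k+e_{k+1}}$. Then $[M,N]=0$ is automatic because both are nilpotent elements of positive restricted root spaces whose sums are not roots, $[L,N]=-M$ holds after normalizing $Q$ (skew-symmetry of $\ad_L$ for $B_\theta$ on the $\ad_L$-stable plane spanned by $M,N$), and the centralizing of $Fix(\gamma)$ is deduced from $[L,Z_0]=0$ together with straightforward bracket relations. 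Nothing resembling this enters your proposal, since you try to build $\s$ out of $\a$- and $\p$-directions of the strongly orthogonal $\s\l(2)$-triples, where commutativity of the translations simply fails.

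The gap propagates into part (ii). You correctly flag that your group is not unipotent (it contains $\exp\R\widetilde M$ with $\widetilde M$ semisimple), but the fallback you propose — injective differential, finite kernel, a dimension count — does not force closedness of an orbit of a non-reductive, non-unipotent group in an affine variety. The paper's construction is precisely what removes this obstacle: since $M,N\in\n$, the image of $\widetilde S^\C/\widetilde H^\C\cong\C^2$ is the orbit $\exp(\C M\oplus\C N)\cdot x_0$ of a unipotent abelian subgroup, so Rosenlicht's theorem gives closedness and the injectivity of $\iota$ from (\ref{IOTA}) gives injectivity of the induced map. Part (i) of your plan (integrating the complexified algebra over the simply connected $\widetilde S^\C$ and using $\widetilde L\in\k$ to get $\psi(\widetilde H^\C)\subset K^\C$) is fine in principle and matches the paper, but it is moot until the correct $\s\subset Z_\g(Fix(\gamma))$ is produced.
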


\begin{proof} (i) Recall that the restricted root system of $\g$ is
either of type $C_r$  or of type $BC_r$ (see Sect.\,2).
For simplicity of exposition we  assume  $\gamma:=\gamma_{12}$, the reflection flipping the first and the second coordinates (the remaining cases can be dealt in the same way).  Then $Fix(\gamma)^\perp=\R (E_1-E_2)$ and $Fix(\gamma)=\span\{E_1+E_2,E_3,\ldots E_r\}$.
Take an arbitrary element $Q \in \g^{e_1-e_2}$ and set
$$L:=Q +\theta Q, \quad M:=E_1-E_2,\quad N:=[L,M].$$
We first show  that $L,M,N$ lie in the centralizer $Z_\g(Fix(\gamma))$.
By construction, one has that
$$L\in \k ,\quad  M \in \g^{2e_1} \oplus \g^{2e_2},\quad  N\in \g^{e_1+e_2}.$$
In order to see that $[L,E_1+E_2]=0$, let $Z_0={1\over 2} \sum_j T_j +S$, with $T_j=E_j+\theta E_j$ and $S\in Z_\k(\a)$, be the central element in $\k$  given in (\ref{CENTER}).  Since $[L,T_j]= 0$ for $j = 3, \dots, r$, and  the terms  $ [L,T_1 + T_2]$ and  $[L,S]$ are linearly independent, the relation
$ [L,Z_0]=0$ implies  $ [L,T_1 + T_2]=[L,S]=0$.
From  $ [L,T_1 + T_2]=0$ and the identity $\theta L=L$, it follows that
$ [L,E_1 + E_2]+\theta [L,E_1 + E_2]=0.$
This is equivalent to $[L,E_1 + E_2]\in \g^{e_1+e_2}\cap \p$ and implies $[L,E_1 + E_2]=0$, as desired.
The remaining bracket relations
$$[L, E_j]  = [M, E_j] = [N, E_j] = 0, ~\hbox{for}~  j \ge 3,\qquad
[M, E_1 + E_2] = [N,E_1 + E_2] = 0,$$
are all straightforward.

Next we prove that the vectors $\{L,M,N\}$ generate a 3-dimensional solvable subalgebra of $\g$ isomorphic to the algebra $\s:=Lie(\widetilde S)$, discussed
above.
In order to see this, observe that
$ [M,N]=0$. Then it remains to show that, by normalizing $Q$
if necessary, one has  $[L,N] = -M$.
Endow the 3-dimensional subspace  of $\g$
$$V:=\g^{2e_1}\oplus \g^{2e_2}\oplus \R N,$$  with  the restriction of the $\Ad_K$-invariant inner product of $\g$, defined by $B_\theta(X,Y):=-B(X,\theta Y)$, for $X,Y\in\g$.
One can easily verify that the vectors
$\{ E_1+E_2,M=E_1-E_2, N=[L,M]\}$ form an orthogonal basis of $V$ with respect to $B_\theta$.
Since $\ad_L$ is a skew-symmetric operator and  $[L,E_1+E_2]=0$,
the 2-dimensional  subspace $Span\{M,N\} $ is $\ad_L$-stable in $V$.  Thus one can normalize $Q$ so that  $\ad_L(N)=-M$, as desired.

\pn
(ii) Under the identification of $\C^2$ with $ \widetilde S^\C/\widetilde H^\C$  given in Lemma \ref{PONTE},  the induced map is given by $(z,w) \to \exp (zM + wN) \cdot x_0$.
Its image  can be viewed as the orbit
through the base point $x_0$
of the abelian subgroup   with Lie algebra $\span_\C
\{M, \, N \}$. Now the
result follows from  the injectivity of the map $\iota$ defined in (\ref{IOTA}) and  Theorem 2 in \cite{Ros61},
stating that  the orbits of a
unipotent subgroup in the affine space $G^\C/K^\C$ are closed.
\end{proof}


\bn
{\bf Example.} As an example take  $G=Sp(r,\R)$.
Fix
$$Q=  \begin{pmatrix}
\check Q&O&O&O\cr O&O&O&O\cr O&O& -\check Q^t &O\cr O&O&O&O\cr
\end{pmatrix} \in \g^{e_1-e_2},\qquad {\rm with }\quad\check
Q=\begin{pmatrix} 0&-1/2\cr 0&0\end{pmatrix}.$$
The Lie subalgebra of $\g$
generated by the matrices
$$L= \begin{pmatrix}
\check L &O&O&O\cr O&O&O&O\cr O&O& \check L &O\cr O&O&O&O\cr
\end{pmatrix},\quad M=
\begin{pmatrix}O&O&\check M&O\cr O&O&O&O\cr O&O&O&O\cr O&O&O&O\cr \end{pmatrix}
,\quad N= \begin{pmatrix}
O&O& \check N&O\cr O&O&O&O\cr O&O&O&O\cr O&O&O&O\cr
\end{pmatrix},$$
where
$$\check L =
\begin{pmatrix}0&-1/2 \cr 1/2 &0  \end{pmatrix},\quad \check M
= \begin{pmatrix}1&0\cr 0&-1 \end{pmatrix},\quad \check N= \begin{pmatrix}0& 1\cr 1&0 \end{pmatrix},$$
is isomorphic to $\s$.
The  corresponding group
is closed in $Sp(r,\R)$ and
given by
$$
\begin{pmatrix}
U&O&B&O\cr O&I_{r-2}&O&O\cr O&O&U&O\cr O&O&O&I_{r-2}\cr
\end{pmatrix},\qquad U\in SO(2),~  B={}^tB,~tr(B)=0.$$

 \bn
 
\begin{prop}
\label{REDUCTION2}
{\bf (Reduction 2)} Let $\gamma$ be a reflection in $  W_K(\Lambda_r) \setminus W^0$.
The map $$f\colon G\exp i ({\mathcal D}_\circ^\llcorner\cup \gamma \cdot  {\mathcal D}_\circ^\llcorner) \cdot x_0\to \widehat D$$ has a $G$-equivariant, holomorphic extension to the   domain
$$\widetilde D= G \exp i ~ \conv({\mathcal D}_\circ^\llcorner\cup \gamma
 \cdot {\mathcal D}_\circ^\llcorner)\cdot x_0 .$$
\end{prop}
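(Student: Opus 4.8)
The plan is to reduce the statement to an application of the Extension Lemma (Lemma~\ref{EXTENSION}), after having produced a suitable holomorphic extension of $f$ along an auxiliary complex submanifold using Lemma~\ref{SOLVABLE} and Lemma~\ref{PONTE}. Write $\Lambda_r = Fix(\gamma)\oplus Fix(\gamma)^\perp$ and, following the notation of Lemma~\ref{SOLVABLE}, let $L,M,N \in Z_\g(Fix(\gamma))$ be vectors spanning a copy of $\s = \mathrm{Lie}(\widetilde S)$, with the associated morphism $\psi\colon \widetilde S^\C \to G^\C$ inducing a closed embedding $\widetilde S^\C/\widetilde H^\C \hookrightarrow G^\C/K^\C$ whose image is the orbit of $x_0$ under the abelian group $\exp\span_\C\{M,N\}$. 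Since $\gamma = \gamma_{12}$ (the general reflection case being identical up to relabelling) we may take $M = E_1 - E_2$; then the real points of the slice direction, parametrized by $\exp i\R\widetilde M\cdot e\widetilde H^\C$, correspond under $\iota$ to $\exp i\R(E_1-E_2)\cdot x_0$, and the $\Z_2$ of Lemma~\ref{PONTE}(iv) is exactly the reflection $\gamma$ acting on $\Lambda_r^\llcorner$.

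First I would intersect $D$ with the embedded copy of $\widetilde S^\C/\widetilde H^\C$. Because $Fix(\gamma)^\perp$ meets $({\mathcal D}^\llcorner)^+$ trivially by hypothesis ($\gamma\notin W^0$, so $\gamma\notin\Gamma^0$), the components ${\mathcal D}_\circ^\llcorner$ and $\gamma\cdot{\mathcal D}_\circ^\llcorner$ are disjoint; translating along $\exp Fix(\gamma)$ one sees that $D \cap \psi(\widetilde S^\C/\widetilde H^\C)$ is a nonempty $\widetilde S$-invariant domain $T$ in $\widetilde S^\C/\widetilde H^\C$, whose base in $\C^2$ (Lemma~\ref{PONTE}(i),(ii)) is a tube over an annulus $\R^2 + i\Omega$, with $\Omega$ symmetric under $\widetilde M\mapsto -\widetilde M$ and disconnected — its two components sitting over ${\mathcal D}_\circ^\llcorner$ and $\gamma\cdot{\mathcal D}_\circ^\llcorner$ respectively. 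By Proposition~\ref{UNIVERSALITY}, applied to the holomorphic inclusion $T \hookrightarrow D \xrightarrow{f} \widehat D$, the map $f|_T$ extends holomorphically and $\widetilde S$-equivariantly to $\widehat T$. By Lemma~\ref{PONTE}(iii) the envelope $\widehat T$ is the tube over the convex hull $\conv(\Omega)$ — a disc containing the origin — hence $\widehat T$ equivariantly embeds in $\Xi^+$ as $\psi(\widetilde S)\exp i\conv(\Omega)\cdot x_0$, and in particular the extension covers the whole segment $\exp i[0,1]\cdot(E_1-E_2)$-family joining the two components. Concretely this yields a holomorphic extension of $f$ along the $1$-dimensional submanifolds $\mathcal S$ required in hypothesis~(ii) of Lemma~\ref{ISOTROPIE}: for $X\in\conv({\mathcal D}_\circ^\llcorner\cup\gamma\cdot{\mathcal D}_\circ^\llcorner)$ and $X'\in{\mathcal D}_\circ^\llcorner$, the segment $X' + t(X-X')$ moves only in the $Fix(\gamma)^\perp = \R M$ direction once we quotient out the $Fix(\gamma)$-part, so (after translating by $\exp Fix(\gamma)$, which preserves $D$) $\mathcal S$ lies inside the embedded $\widehat T$.

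Next I would feed this into Lemma~\ref{ISOTROPIE} with ${\mathcal F} = {\mathcal D}_\circ^\llcorner \cup \gamma\cdot{\mathcal D}_\circ^\llcorner$ and ${\mathcal C} = \conv({\mathcal D}_\circ^\llcorner\cup\gamma\cdot{\mathcal D}_\circ^\llcorner)$: hypothesis~(i) holds because $f$ is already a biholomorphism onto its image on $D$ (so isotropy groups match there), and hypothesis~(ii) is the submanifold extension just constructed. The conclusion is $G_{\exp iX\cdot x_0} = G_{f(\exp iX\cdot x_0)}$ for all $X$ in the convex hull. Finally I would verify the compatibility condition of the Extension Lemma: $\conv({\mathcal D}_\circ^\llcorner\cup\gamma\cdot{\mathcal D}_\circ^\llcorner)$ is $\langle W^0,\gamma\rangle$-invariant, and the extended map is equivariant under the preimage in $N_K(\Lambda_r)$ of this group — for $W^0$ this is inherited from Reduction~1, while for $\gamma$ itself it follows from the $\widetilde S$-equivariance of the extension on $\widehat T$ together with the fact that an element $n\in N_K(\Lambda_r)$ projecting to $\gamma$ acts on the embedded $\widetilde S^\C/\widetilde H^\C$ compatibly with $\Ad_{\exp\pi\widetilde L}$ (Lemma~\ref{PONTE}(iv)). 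Invoking Lemma~\ref{EXTENSION} then produces the desired $G$-equivariant holomorphic extension $\hat f\colon G\exp i\conv({\mathcal D}_\circ^\llcorner\cup\gamma\cdot{\mathcal D}_\circ^\llcorner)\cdot x_0 \to \widehat D$.

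The main obstacle I anticipate is the bookkeeping around the embedding $\psi(\widetilde S^\C/\widetilde H^\C)$: one must check that the intersection $D\cap\psi(\widetilde S^\C/\widetilde H^\C)$ is genuinely the $\widetilde S$-saturation of the two given components (and not something smaller or with extra components), that the $\widetilde S$-slice $\exp i\R(E_1-E_2)\cdot x_0$ is correctly identified via $\iota$ with the $\R\widetilde M$-slice of Lemma~\ref{PONTE}(iv), and — most delicately — that the reflection $\gamma$ of $\Lambda_r^\llcorner$ is realized by an element $n\in N_K(\Lambda_r)$ whose action on the embedded copy agrees with $\psi$ applied to $\exp\pi\widetilde L$, so that the compatibility hypothesis of Lemma~\ref{EXTENSION} is actually met. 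Granting these identifications, which are of the ``easily verified'' type flagged before Lemma~\ref{PONTE}, the argument is a clean concatenation of Proposition~\ref{UNIVERSALITY}, Lemma~\ref{ISOTROPIE} and Lemma~\ref{EXTENSION}.
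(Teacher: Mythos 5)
There is a genuine gap, and it is exactly the point the paper flags in its introduction: a single embedded copy of $\widetilde S^\C/\widetilde H^\C$ through $x_0$ is not enough, because its trace on the slice $\exp i\Lambda_r^\llcorner\cdot x_0$ is not open. Concretely, the slice direction of your copy is $\exp i\R M\cdot x_0$ with $M=E_1-E_2$, and $\R M\cap \Lambda_r^\llcorner=\{0\}$ (the points $t(E_1-E_2)$, $t\neq 0$, have a negative coordinate), so the base annulus $\Omega\subset\m\cong\R^2$ of $T=D\cap\psi(\widetilde S^\C/\widetilde H^\C)$ cannot ``sit over'' the $r$-dimensional sets ${\mathcal D}_\circ^\llcorner$ and $\gamma\cdot{\mathcal D}_\circ^\llcorner$; that identification does not even make sense dimensionally, and which $\widetilde S$-orbits of the single copy lie in $D$ is governed by the $G$-orbit of $\exp(itM)\cdot x_0$, not by ${\mathcal D}_\circ^\llcorner$. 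Your parenthetical fix, ``after translating by $\exp Fix(\gamma)$, which preserves $D$,'' does not work either: the relevant translations are by $\exp iY$ with $Y\in Fix(\gamma)$, which are not elements of $G$ and do not preserve $D$; they move you off the fixed copy onto the translated submanifolds $\exp iY\cdot T_Y$. What the proposition actually requires is the continuous family of tubes $T_Y=S\exp i\Sigma_Y\cdot x_0$, $\Sigma_Y=\R M\cap\bigl(({\mathcal D}_\circ^\llcorner\cup\gamma\cdot{\mathcal D}_\circ^\llcorner)-Y\bigr)$, one for each decomposition $X=Y+Z$ of points of ${\mathcal D}_\circ^\llcorner$, each embedded in $D$ by right translation with $\exp iY$ and each extended over its Bochner envelope $\widehat T_Y$; the candidate map is then glued pointwise by $\hat f(\exp i(Y+Z)\cdot x_0):=\hat f_Y(\exp iZ\cdot x_0)$ on $\widetilde{\mathcal D}=\bigcup_Y\bigl(Y+\conv(\Sigma_Y)\bigr)=\conv({\mathcal D}_\circ^\llcorner\cup\gamma\cdot{\mathcal D}_\circ^\llcorner)$.

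Once the family is in place, a second nontrivial step that your proposal skips entirely is the continuity of the glued map $\hat f$: since each $\hat f_Y$ is produced abstractly by Proposition \ref{UNIVERSALITY}, one must show that $Y\mapsto \hat f_Y$ varies continuously, which the paper does by testing against holomorphic functions $F$ on $\widehat D$ and using continuity of the extension operator $\mathcal O(T_\Sigma)\to\mathcal O(\widehat T_\Sigma)$ in the compact-open topology; the lift property $q\circ\hat f=Id$ likewise needs analytic continuation on each $\widehat T_Y$ separately. Your remaining steps (Lemma \ref{ISOTROPIE} with ${\mathcal F}={\mathcal D}_\circ^\llcorner\cup\gamma\cdot{\mathcal D}_\circ^\llcorner$, and the compatibility condition realized by the element $k_\gamma\in H$ inducing $-1$ on $\R M$) do match the paper, but they presuppose a well-defined continuous lift on all of $\exp i\conv({\mathcal D}_\circ^\llcorner\cup\gamma\cdot{\mathcal D}_\circ^\llcorner)\cdot x_0$, which the single-copy construction does not provide. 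Also, a small slip: the hypothesis $\gamma\notin\Gamma^0$ says $Fix(\gamma)$, not $Fix(\gamma)^\perp$, misses $({\mathcal D}^\llcorner)^+$.
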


\smallskip
\begin{proof}
Again  for simplicity  of exposition we assume $\gamma=\gamma_{12}$. Then  $Fix(\gamma)=\span\{E_1+E_2,E_3,\ldots, E_r\}$ and $Fix(\gamma)^\perp =\R (E_1-E_2)$. 
Now set $M:=E_1-E_2 $ and let $N$ and $L$ be as in the proof of Lemma \ref{SOLVABLE}. Denote by $\s$ the Lie subalgebra of $Z_\g(Fix(\gamma))$ generated by $\{L,~M,~N\}$ and by $S$ the corresponding subgroup in $Z_G(Fix(\gamma))$.
Denote by $\m$ the abelian subalgebra of $\s$ generated by $\{M,~N\}$, and by 
 $H$ the (possibly non-closed) subgroup of 
$Z_G(Fix(\gamma)) \cap K$ with Lie algebra $\R L$.  

By reduction 1
we  may assume that ${\mathcal D}_\circ^\llcorner$ is convex.
Let $X$ be an arbitrary element in ${\mathcal D}_\circ^\llcorner$. Then $X$  decomposes in a unique
way as  $$X=Y +Z,$$ where
$Y=Y(X) \in Fix(\gamma)$ and $Z=Z(X)\in Fix(\gamma)^\perp =\R M $   depend continuously on~$X$.
For $X\in  {\mathcal D}_\circ^\llcorner $, define 
$$\Sigma_{Y}:=\R M \bigcap ( ({\mathcal D}_\circ^\llcorner \cup
\gamma \cdot  {\mathcal D}_\circ^\llcorner)
-Y)\,,$$ 
and 
$$A_{Y}:=\Ad_H\Sigma_{Y}. $$  Since the Adjoint action of  $H$ on $\m$ is by rotations, the set 
$A_Y$ is an  annulus in $\m$.  
Denote by
\begin{equation}\label{TUBEY} T_Y :=  \exp (iA_{Y} + \m) \cdot x_0=S\exp i{\Sigma_{Y} } \cdot x_0 \end{equation} 
the image of the tube domain $iA_{Y} + \m$ in $ \m^\C \cong \C^2$ under the embedding 
\begin{equation}\label{IOTA2}\iota : \m^\C \to G^\C/K^\C, \quad 
 W \to \exp W \cdot x_0 \end{equation}
(see Lemma \ref{PONTE}(i) and  Lemma \ref{SOLVABLE}(ii)).
Note that $Y + \Sigma_Y$ is contained in $ {\mathcal D}_\circ^\llcorner \cup
\gamma \cdot  {\mathcal D}_\circ^\llcorner$. Since $Y \in Fix(\gamma)$ and $S$ centralizes $ Fix(\gamma) $,  
  right-translation by    $\exp iY$
$$T_Y\to D ,\qquad \exp W\cdot x_0 \mapsto  \exp iY \,\exp W\cdot x_0,\qquad W\in iA_Y+\m  $$
is  $S$-equivariant, and so is the holomorphic map   
$$f_Y:T_Y \to \widehat D,\qquad \exp W \cdot x_0 \to f(\exp iY \exp W\cdot x_0)\,.$$
  Now recall that by Bochner's tube theorem, the  envelope of holomorphy of $T_Y$ is  univalent 
and  given by 
$$\widehat T_Y =\exp (i\conv(A_{Y}) + \m) \cdot x_0= S\exp i\conv(\Sigma_{Y})\cdot x_0 $$
(note that $\conv(\Sigma_{Y})=\conv(A_{Y})\cap \R M$). In particular, it is contained in $\Xi^+$. 
Hence, by Lemma \ref{UNIVERSALITY},  the map $f_Y$  extends  holomorphically
and  $S$-equivariantly to
 $$\hat f_Y: \widehat T_{Y}  \to \widehat D\,.$$
As $X$ varies in ${\mathcal D}_\circ^\llcorner$, one obtains  a
family of $S$-equivariant holomorphic  maps $\hat f_{Y}$,  parametrized by  $Y$.
Set $$\widetilde {\mathcal D}:= \bigcup_{X \in {\mathcal D}_\circ^\llcorner}
Y + \conv ({\Sigma_{Y}})\,. $$
Then an argument similar to the one of  Lemma 7.7 (iv) in \cite{Nee98},
shows that $$\widetilde {\mathcal D}=\conv({\mathcal D}_\circ^\llcorner \cup \gamma \cdot  {\mathcal D}_\circ^\llcorner) .$$
We define a candidate for the desired extension $\hat f\colon \exp i \widetilde {\mathcal D} \cdot x_0 \to \widehat D$ as follows 
\begin{equation} 
\label{FHAT}
 \hat f(\exp iX \cdot x_0):= \hat f_{Y}
(\exp iZ \cdot x_0) \,.
\end{equation}
First of all, the map  $\hat f$ coincides with $f$ on   ${\exp i ({\mathcal D}_\circ^\llcorner \cup \gamma \cdot  {\mathcal D}_\circ^\llcorner) \cdot x_0}$, since for $X \in 
{\mathcal D}_\circ^\llcorner \cup \gamma \cdot  {\mathcal D}_\circ^\llcorner$
one has that $Z\in  \Sigma_Y$ and 
$$ \hat f(\exp iX \cdot x_0)= \hat f_{Y}(\exp iZ \cdot x_0)=f_{Y}(\exp iZ \cdot x_0)=$$
$$= f(\exp iY\exp i Z\cdot x_0) = f(\exp i(Y+ Z) \cdot x_0)=
f(\exp iX \cdot x_0)\,.$$

\sn
In order to apply the extension Lemma \ref{EXTENSION} and obtain  a $G$-equivariant holomorphic extension of $f$ to $G \exp i\widetilde{\mathcal D}\cdot x_0$,  we need to check that $\hat f$ defined in (\ref{FHAT}) meets all the necessary assumptions. 

\sn
$\bullet$  The map  $\hat f$  is a lift  of the natural inclusion  $\exp i\widetilde{{\mathcal D}}\cdot x_0\hookrightarrow \Xi^+$. 
\pn
Since $\hat f$ extends $f$,  one has 
 $q\circ \hat f(\exp iX\cdot x_0)= \exp iX\cdot x_0
\,,$  for all $X\in {\mathcal D}_\circ^\llcorner \cup \gamma
\cdot   {\mathcal D}_\circ^\llcorner\,.$ 
In particular, from (\ref{TUBEY}), the $S$-equivariance of  $q  \circ f_{Y}$ and the fact that  
 $S$ centralizes $Y$,  one has
$$ q\circ f_{Y}(\exp iZ \cdot x_0)= \exp iY \exp iZ \cdot x_0\,,\quad\hbox{for all $Z\in \Sigma_Y$.}$$
By applying the analytic continuation principle to each
$q \circ \hat f_Y: \widehat T_Y \to G^\C/K^\C$, one obtains
$$q \circ \hat f(\exp iX \cdot x_0)= q \circ \hat  f_{Y}(\exp iZ \cdot x_0) =
\exp iY \exp iZ \cdot x_0=  \exp iX \cdot x_0$$
for all $X \in  \widetilde {\mathcal D}$.

\medskip
\noindent
$\bullet$  The map  $\hat f$  is continuous.
\pn
The Stein Riemann domain
$ \widehat  D$ admits
a holomorphic embedding in some $\C^N$. Then, in order to prove that
$\hat f$ is continuous, it is sufficient to show that
given an arbitrary holomorphic function $F: \widehat  D \to \C$, the composition
$ F \circ \hat f:   \exp i \widetilde {\mathcal D} \cdot x_0 \to \C$
is continuous. Since the map $\iota$ in (\ref{IOTA2}) 
is an embedding,  
this is equivalent to checking that the map
$$F \circ \hat f \circ \iota |_{i\widetilde {\mathcal D}}:
i\widetilde {\mathcal D} \to \C, \quad \quad iX \to
F \circ \hat f (\exp iX \cdot x_0)$$
is continuous.

Choose an open  set $U$   in
$Fix(\gamma)$ and an open $\gamma$-invariant subset $\Sigma$ in $\R M=Fix(\gamma)^\perp$
such that $U+ \Sigma \subset  {\mathcal D}_\circ^\llcorner \cup
\gamma \cdot  {\mathcal D}_\circ^\llcorner$. By the definition of $\Sigma$,  for 
$Y \in U$
 the functions $f_{Y}$  are all defined on the
  tube domain $T_\Sigma=S\exp i\Sigma\cdot x_0$.
Moreover, the map
$$U \to \mathcal O(T_{\Sigma}, \C)\,, \quad Y \to F \circ  f_{Y}|_{T_\Sigma}$$
is continuous with respect to  the compact-open topology
on  the Fr\'echet algebra
$\mathcal O(T_{\Sigma}, \C)$ of holomorphic
functions on $T_{\Sigma}$.
Indeed, for $W\in iA_Y+\m$   and $Y \in  U$,  one has
$$F \circ f_{Y}(\exp W \cdot x_0)=F \circ f(\exp iY\exp W \cdot x_0)=
 F \circ f(\exp (iY  + W) \cdot x_0) \,.
$$
Thus,  if $Y_n \to Y_0$, then  $F \circ  f_{Y_n}
\to F \circ  f_{Y_0} $
uniformly on compact subsets of $T_{\Sigma}$.
Since the extension map  $\mathcal O(T_\Sigma, \C) \to \mathcal 
O(\widehat T_\Sigma, \C)$ 
is continuous (see cap.\,I in \cite{Gun90}), 
 it follows that also 
 the map $$U \to \mathcal O(\widehat T_\Sigma, \C)\,, \quad Y \to
F \circ \hat f_Y |_{\widehat T_\Sigma} $$
is  continuous with respect to the compact-open  topology on  
$\mathcal O(\widehat T_\Sigma, \C)$.
As we already remarked, one has   $\widehat T_\Sigma= 
S \exp i \conv (\Sigma) \cdot x_0$.
As a consequence, the map
$$F \circ  \hat f \circ \iota |_{i(U +\conv(\Sigma))}: i(U +\conv(\Sigma)) \to \C\,,$$
defined by
$$iX \to F \circ \hat f (\exp iX \cdot x_0)=  F \circ  \hat f_Y(\exp iZ \cdot x_0)$$
is continuous. Since the domains of the form 
$i(U+ \conv(\Sigma))$ cover $i \widetilde {\mathcal D}$,
the map $\hat f$ is continuous.

\medskip
\noindent
$\bullet$ For all $X\in \widetilde {\mathcal D}$,  one has $G_{\hat f(\exp iX\cdot x_0)}=G_{\exp iX \cdot x_0}$. 
\pn 
We apply Lemma \ref{ISOTROPIE}, with $\mathcal C= \widetilde {\mathcal D}$ and $\mathcal F={\mathcal D}_\circ^\llcorner \cup
\gamma \cdot  {\mathcal D}_\circ^\llcorner$.   
In order to check condition (ii) of the lemma, let $X=Y+ Z \in Y + \conv (\Sigma_Y)$ be an arbitrary element
of $\mathcal C \setminus \mathcal F$. Then there exists $Z' \in \Sigma_Y$ such that $X' = Y+Z' \in 
Y + \Sigma_Y \subset  \mathcal F$ and  the 
one dimensional complex submanifold in (ii)
of Lemma \ref{ISOTROPIE} is given by
$$ \mathcal S:= \{\, \exp i(X' + \lambda (X-X'))\cdot x_0 \ : {\rm Re} \lambda \in [0,1] \, \}=$$
$$= \{\, \exp i(Y + Z' + \lambda (Z-Z'))\cdot x_0 \ : {\rm Re} \lambda \in [0,1] \, \}\,.$$
Note that $Z-Z'$ belongs to $\R M$ and that the strip
$$\{i(Z' + \lambda (Z-Z')))\cdot x_0 \ : {\rm Re} \lambda \in [0,1] \, \}$$
is contained in $i \conv (\Sigma_Y) + \m$.
Thus $\exp i( Z' + \lambda (Z-Z')) \cdot x_0 \in T_Y$ and 
one has a natural holomorphic extension of $ f$ to the 
one dimensional complex submanifold 
$\mathcal S$, namely  
$$\hat f ( \exp i(Y + Z' + \lambda (Z-Z')) \cdot x_0)=
\hat f _Y(\exp i( Z' + \lambda (Z-Z')) \cdot x_0)\,.$$
This shows that  we can apply Lemma 
\ref{ISOTROPIE}, as claimed.

\medskip
\noindent
$\bullet$ The map  $\hat f$   satisfies  the compatibility condition.
\pn
Let  $k_\gamma\in H$ be
the element inducing the reflection with respect to the origin in $\R M$. Since
$H$ centralizes $Fix(\gamma)$, the element   $k_\gamma$ belongs to
$N_K(\Lambda_r^\llcorner)$ and induces the reflection $\gamma$
given in the statement. Hence, 
 for every $X \in \widetilde {\mathcal D}$
  one has $\gamma \cdot X  
=  \Ad_{k_\gamma} X$. Moreover,  by  the  $H$-equivariance of the maps
$\hat f_{Y}$, one obtains the identity
$$\hat f(\exp (i \, \gamma \cdot X) \cdot x_0)=
\hat f(\exp i (Y+\gamma \cdot Z) \cdot x_0)=\hat f_Y(\exp i \Ad_{k_\gamma} Z \cdot x_0)=$$
$$=\hat f_Y(k_\gamma \exp i Z \cdot x_0)=
k_\gamma \cdot \hat f_{Y}(\exp iZ \cdot x_0)=
k_\gamma \cdot \hat f(\exp i X \cdot x_0) \, ,$$
which is the desired compatibility condition.

\medskip
In conclusion, since the map $\hat f$ defined in (\ref{FHAT}) meets all the assumptions of  Lemma
\ref{EXTENSION}, it  extends to a $G$-equivariant, holomorphic  map
$$\hat f \colon G\exp i\conv({\mathcal D}_\circ^\llcorner\cup \gamma
 \cdot {\mathcal D}_\circ^\llcorner)\cdot x_0\to \widehat D\,,$$
as claimed.
\end{proof}

\mn
\begin{cor} By iterating the above reduction 2 finitely many times, we obtain a $G$-equivariant holomorphic  extension of $f\colon D=G\exp i {\mathcal D}^\llcorner \cdot x_0\to\widehat D$ to 
$$\hat f\colon G\exp i\conv(\mathcal D^\llcorner)\cdot x_0\to \widehat D.$$
\end{cor}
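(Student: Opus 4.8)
The plan is to realize $\conv(\mathcal D^\llcorner)$ as the slice of the terminal domain in a finite chain of $G$-invariant domains, each obtained from the previous one by one application of Reduction~2 (Proposition~\ref{REDUCTION2}), the last step being an application of Reduction~1 (Proposition~\ref{REDUCTION1}); finiteness of the chain will follow from finiteness of the Weyl group $W_K(\Lambda_r)$. Set $E:=G\exp i\conv(\mathcal D^\llcorner)\cdot x_0$. Since $\mathcal D^\llcorner$ is a $W_K(\Lambda_r)$-invariant open subset of $\Lambda_r^\llcorner=\bigoplus_{j=1}^r[0,\infty)E_j$, so is $\conv(\mathcal D^\llcorner)$; hence $E$ is a $G$-invariant domain of $\Xi^+$ containing $D$. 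By Proposition~\ref{TARGET}(ii) it suffices to extend $f$ holomorphically and $G$-equivariantly to $E$.

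I would argue by iteration. Suppose $f$ has been extended holomorphically and $G$-equivariantly to a $G$-invariant domain $\widetilde D$ with $D\subseteq\widetilde D\subseteq E$; then the envelope of holomorphy of $\widetilde D$ is again $\widehat D$ by Proposition~\ref{TARGET}(ii), so Reductions~1 and~2 are available for $\widetilde D$. Let $\widetilde{\mathcal D}^\llcorner$ be the slice of $\widetilde D$, let $\widetilde{\mathcal D}^\llcorner_\circ$ be its distinguished connected component, and let $\Gamma^0(\widetilde D)$ and $W^0(\widetilde D)=\langle\Gamma^0(\widetilde D)\rangle$ be associated with $\widetilde D$ exactly as in the definitions preceding Lemma~\ref{COMPONENTE} (so $\Gamma^0(\widetilde D)$ consists of the reflections $\gamma_{kk+1}$ whose fixed hyperplane contains a nonzero element of the perfect slice $(\widetilde{\mathcal D}^\llcorner)^+$). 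If $W^0(\widetilde D)=W_K(\Lambda_r)$, then by Lemma~\ref{COMPONENTE} one has $\widetilde{\mathcal D}^\llcorner_\circ=W^0(\widetilde D)\cdot(\widetilde{\mathcal D}^\llcorner)^+=\widetilde{\mathcal D}^\llcorner$, so $\widetilde{\mathcal D}^\llcorner$ is connected and a single application of Reduction~1 extends $f$ to $G\exp i\conv(\widetilde{\mathcal D}^\llcorner)\cdot x_0$; this coincides with $E$, because $\mathcal D^\llcorner\subseteq\widetilde{\mathcal D}^\llcorner\subseteq\conv(\mathcal D^\llcorner)$ forces $\conv(\widetilde{\mathcal D}^\llcorner)=\conv(\mathcal D^\llcorner)$, and we are done. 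If instead $W^0(\widetilde D)\neq W_K(\Lambda_r)$, then, as $W_K(\Lambda_r)$ is generated by the reflections $\gamma_{kk+1}$ while $W^0(\widetilde D)$ is proper, some $\gamma:=\gamma_{kk+1}$ does not lie in $W^0(\widetilde D)$. Applying Reduction~2 with this $\gamma$ (its proof first reduces, via Reduction~1, to the case where the distinguished component is convex) extends $f$ to the $G$-invariant domain $\widetilde D':=G\exp i\conv(\widetilde{\mathcal D}^\llcorner_\circ\cup\gamma\cdot\widetilde{\mathcal D}^\llcorner_\circ)\cdot x_0$. One checks $D\subseteq\widetilde D'\subseteq E$: the first inclusion is clear, and the second holds because $\conv(\mathcal D^\llcorner)$ is convex and $W_K(\Lambda_r)$-invariant, hence invariant under $\gamma$, while $\widetilde{\mathcal D}^\llcorner_\circ\subseteq\widetilde{\mathcal D}^\llcorner\subseteq\conv(\mathcal D^\llcorner)$. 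We then replace $\widetilde D$ by $\widetilde D'$ and repeat.

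The iteration must terminate, because each application of Reduction~2 strictly enlarges $W^0$. Indeed the slice of $\widetilde D'$ contains that of $\widetilde D$, hence the perfect slice of $\widetilde D'$ contains $(\widetilde{\mathcal D}^\llcorner)^+$ and $\Gamma^0(\widetilde D')$ contains $\Gamma^0(\widetilde D)$; moreover $\gamma$ itself now belongs to $\Gamma^0(\widetilde D')$. To see this last point, pick any nonzero $a=\sum_{j}a_jE_j\in(\widetilde{\mathcal D}^\llcorner)^+$, so $a_1\ge\dots\ge a_r\ge 0$; then the midpoint $\frac{1}{2}(a+\gamma a)$ is a nonzero, $\gamma$-fixed vector lying in $\conv(\widetilde{\mathcal D}^\llcorner_\circ\cup\gamma\cdot\widetilde{\mathcal D}^\llcorner_\circ)$, hence in the slice of $\widetilde D'$, and it still lies in the closed Weyl chamber $\overline{(\Lambda_r^\llcorner)^+}$, since its $k$-th and $(k+1)$-th coordinates both equal $\frac{1}{2}(a_k+a_{k+1})$ and $a_k\ge a_{k+1}$ keeps all coordinates non-increasing. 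Thus $\frac{1}{2}(a+\gamma a)$ is a nonzero point of $Fix(\gamma)$ in the perfect slice of $\widetilde D'$, so $\gamma\in\Gamma^0(\widetilde D')$ whereas $\gamma\notin W^0(\widetilde D)\supseteq\Gamma^0(\widetilde D)$; combined with the inclusion $\Gamma^0(\widetilde D)\subseteq\Gamma^0(\widetilde D')$ this gives $W^0(\widetilde D)\subsetneq W^0(\widetilde D')$. As $W_K(\Lambda_r)$ is finite, a strictly increasing chain of its subgroups is finite, so after finitely many applications of Reduction~2 we reach the case $W^0=W_K(\Lambda_r)$ and thereby obtain the desired extension $\hat f\colon E=G\exp i\conv(\mathcal D^\llcorner)\cdot x_0\to\widehat D$ (which, by Proposition~\ref{TARGET}(ii), also shows $\widehat E=\widehat D$).

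I expect the only genuinely delicate point to be this bookkeeping: verifying at each stage that the enlarged domain stays inside $E=G\exp i\conv(\mathcal D^\llcorner)\cdot x_0$ (which rests on convexity and $W_K(\Lambda_r)$-invariance of $\conv(\mathcal D^\llcorner)$, and on $\gamma$-invariance since $\gamma\in W_K(\Lambda_r)$), and checking that the midpoint $\frac{1}{2}(a+\gamma a)$ certifies $\gamma\in\Gamma^0$ for the enlarged domain — which is exactly the monotonicity that forces termination. All the analytic content has already been isolated in Propositions~\ref{TARGET}(ii), \ref{REDUCTION1} and \ref{REDUCTION2}.
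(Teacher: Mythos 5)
Your proof is correct and takes essentially the route the paper intends: the corollary is stated there without further detail, and your argument simply iterates Reduction 2 (with Proposition \ref{TARGET}(ii) making the reductions available at each stage) and finishes with Reduction 1 once the slice is connected. The extra bookkeeping you supply — that each step strictly enlarges the subgroup $W^0$ of the finite group $W_K(\Lambda_r)$, certified by the midpoint $\tfrac{1}{2}(a+\gamma a)$ lying in $Fix(\gamma)$ and in the new perfect slice — is a valid way to make the promised finiteness explicit.
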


\bn

\section{The main theorem}

Let $D$ be a $G$-invariant domain in $\Xi^+$.  Assume that $D$ is  not entirely contained  in the crown $\Xi$ nor in the domain $S^+$ (in the tube case).  In this section we  prove our main theorem, namely   that the envelope of holomorphy  $\widehat D$ of $D$ is univalent and coincides with $\Xi^+$ (Thm. \ref{MAIN}). 
As a by-product we obtain that every Stein $G$-invariant subdomain of $\Xi^+$ is either contained in $\Xi$ or, in the tube case,  in $S^+$  (Thm.\,\ref{SUBDOMAINS}).


\bigskip
\begin{theorem}
\label{MAIN}
  Let $\,G/K\,$ be an irreducible Hermitian symmetric space.
Given a $\,G$-invariant domain  $\,D \,$ in $\,\Xi^+$, denote by $\,\widehat D\,$
its envelope of holomorphy.
\pn
{\rm (i)}   Assume $\,G/K\,$ is of tube type. If $\,D\,$ is not contained in $\,\Xi\,$ nor in
$\,S^+$, then $\,\widehat D\,$ is univalent and coincides with $\,\Xi^+\,$.
\pn
{\rm (ii)}  Assume $\,G/K\,$ is not of tube type.  If $\,D\,$ is not contained in $\,\Xi\,$, then $\,\widehat D\,$ is univalent and coincides with $\,\Xi^+$.

    \end{theorem}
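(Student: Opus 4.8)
The plan is to reduce, via the machinery developed in Section~4, the whole problem to the rank-one statements of Propositions~\ref{RANKONE1} and~\ref{RANKONE2}. Recall that a $G$-invariant domain $D\subset\Xi^+$ is encoded by the $W_K(\Lambda_r)$-invariant open set $\mathcal D^\llcorner\subset\Lambda_r^\llcorner$, and that $\Xi^+$ itself corresponds to all of $\bigoplus_{j=1}^r(-1,\infty)E_j\cap\Lambda_r^\llcorner$. By Proposition~\ref{TARGET}(ii), to prove $\widehat D=\Xi^+$ and its univalence it suffices to produce a $G$-equivariant holomorphic extension $\hat f\colon\Xi^+\to\widehat D$ of the embedding $f\colon D\to\widehat D$; equivalently, using the Extension Lemma~\ref{EXTENSION}, a compatible continuous lift of $f|_{\exp i\mathcal D^\llcorner\cdot x_0}$ to all of $\exp i(\bigoplus_j[0,\infty)E_j\cap\Lambda_r^\llcorner)\cdot x_0$ with the correct isotropy identities. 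The first two steps are already in place: by Reduction~1 (Prop.~\ref{REDUCTION1}) we extend $f$ so that the connected component $\mathcal D_\circ^\llcorner$ becomes convex, and by the Corollary to Reduction~2 we may iterate and assume $\mathcal D^\llcorner=\conv(\mathcal D^\llcorner)$ is itself convex and connected. So from now on $\mathcal D^\llcorner$ is a convex $W_K(\Lambda_r)$-invariant open subset of $\Lambda_r^\llcorner=\span_{\R^{\ge0}}\{E_1,\dots,E_r\}$, and in particular it contains a neighborhood of $0$ in $\Lambda_r^\llcorner$ (since $\conv$ of a $W_K$-invariant set containing a point $\sum x_jE_j$ contains its barycenter, which after permutation-averaging and then moving toward the diagonal forces points near $0$).

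The third and decisive step is the rank-one reduction. Because $D$ is \emph{not} contained in $\Xi$ (and, in the tube case, not in $S^+$), the slice $(\mathcal D^\llcorner)^+$ contains, in the tube case, either a point $\sum x_jE_j$ with some $x_j>1$ that is connected through $(\mathcal D^\llcorner)^+$ to a point with all coordinates $<1$ — so, by convexity and connectedness, a point with $x_j=1$ for some coordinate — or in the non-tube case simply a point with some coordinate $\ge1$. Fix such a $j$, say $j=r$ after applying a Weyl element. I would now use the commuting $\s\l(2)$-triples: the subgroup $G_r$ (tube case) or $G_r^\bullet$ (non-tube case) is a rank-one Hermitian group that, by relations~(\ref{RELATIONS2}) and Lemma~\ref{COMMUTATIVITY}, commutes with $G_k$ for $k\ne r$ and hence with the subgroup generated by $E_1,\dots,E_{r-1}$. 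Fix a point $X_0=\sum_{k<r}x_k E_k$ with $x_k$ chosen generic and small so that $X_0+tE_r\in\mathcal D^\llcorner$ for $t$ in some open interval $I\ni$ the critical value. Right-translation by $\exp iX_0$ carries the orbit of the rank-one group through $\exp itE_r\cdot x_0$ into $D$, giving a $G_r$- (resp.~$G_r^\bullet$-)equivariant holomorphic map from a rank-one invariant domain $T$ into $\widehat D$; by Proposition~\ref{UNIVERSALITY} it extends to $\widehat T$, and by Propositions~\ref{RANKONE1}(iii) resp.~\ref{RANKONE2}(ii), since $T$ meets the critical orbit $G_r\cdot\ell(1)$ resp.~an orbit $G_r^\bullet\cdot\ell(t)$ with $t\ge1$, one has $\widehat T=\Xi^+_{\mathrm{rk.1}}=$ the full rank-one domain $G_r\exp i[0,\infty)E_r\cdot x_0$. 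Sliding $X_0$ over all generic points of the convex set $\{X\in\mathcal D^\llcorner:X\perp E_r\}$-component, and invoking Lemma~\ref{ISOTROPIE} (the relevant one-dimensional slices $\mathcal S$ lie inside these rank-one copies) to carry along the isotropy identity $G_{\exp iX\cdot x_0}=G_{f(\exp iX\cdot x_0)}$, produces a continuous $W^0$-compatible extension of $f$ to $\exp i\widetilde{\mathcal D}^\llcorner\cdot x_0$ where $\widetilde{\mathcal D}^\llcorner$ now contains arbitrarily large values of the $r$-th coordinate; applying Reductions~1 and~2 again and iterating over the remaining coordinate directions (there are finitely many) fills out the entire slice $\bigoplus_j[0,\infty)E_j\cap\Lambda_r^\llcorner$.

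At that point $\widetilde{\mathcal D}^\llcorner$ equals the full slice, so by the Extension Lemma~\ref{EXTENSION} we get a $G$-equivariant holomorphic $\hat f\colon\Xi^+\to\widehat D$ extending $f$, whence by Proposition~\ref{TARGET}(ii) $\widehat\Xi^+=\widehat D$; since $\Xi^+$ is Stein it is its own envelope, so $\widehat D=\Xi^+$ and $q\colon\widehat D\to\Xi^+$ is a biholomorphism, i.e.~$\widehat D$ is univalent. The same argument covers both cases, the only difference being whether one uses $G_j$ and Proposition~\ref{RANKONE1}(iii) (tube type, exploiting that $D\not\subseteq S^+$ forces the critical value $1$ to be reached from below along the convex connected slice) or $G_j^\bullet$ and Proposition~\ref{RANKONE2}(ii) (non-tube type, where merely $D\not\subseteq\Xi$ already puts a $t\ge1$ orbit into the picture and there is no $S^+$ to exclude).

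The main obstacle I anticipate is bookkeeping the passage between slices of different dimension: one must choose the embedded rank-one space $L^\C/H^\C$ so that its intersection with $\Sigma=\exp i\Lambda_r^\llcorner\cdot x_0$, namely the real-analytic arc $\widehat T\cap\Sigma$, glues continuously as the transverse parameter $X_0$ varies — and, as flagged in the introduction, this intersection is \emph{not} open in $\Sigma$, so at each stage one genuinely needs the continuous family $\{T_{X_0}\}$ construction (analogous to the $T_Y$ family in Reduction~2) together with the continuity argument in the Fréchet algebra $\mathcal O(\widehat T,\C)$ to produce an honest open extension domain $\widetilde\Sigma$. Verifying the compatibility condition of Lemma~\ref{EXTENSION} for the full Weyl group $W_K(\Lambda_r)$ — not just $W^0$ — after all the extensions have been performed, using elements $k_\gamma$ normalizing $\Lambda_r$ that live inside the rank-one (or $\widetilde S$-) subgroups, is the second delicate point; it is what ultimately upgrades the locally-built extension into a globally $G$-equivariant one.
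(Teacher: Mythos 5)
Your overall strategy is the paper's own (reductions 1 and 2, then rank-one reduction via the commuting subgroups $G_j$, $G_j^\bullet$ and Propositions \ref{RANKONE1}, \ref{RANKONE2}, glued by Lemmas \ref{ISOTROPIE} and \ref{EXTENSION} and concluded by Proposition \ref{TARGET}(ii)), but several steps as you state them are false or missing, and they are exactly where the real work lies. First, a convex $W_K(\Lambda_r)$-invariant open subset of $\Lambda_r^\llcorner$ need \emph{not} contain a neighborhood of $0$: the slice $\bigoplus_j(1,\infty)E_j$ of $S^+$, or a half-space slice such as $\{\sum_j x_j>2r\}\cap\Lambda_r^\llcorner$, are convex and permutation-invariant and stay far from the origin. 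This invalidates your choice of a transverse base point $X_0$ with ``small'' coordinates. Likewise, $D\not\subseteq S^+$ only produces a point with \emph{some} coordinate $\le 1$, not a point with all coordinates $<1$, so the critical value $1$ need not be reached ``from below'': for the half-space example above there is no point with one coordinate $=1$ and the others $<1$. The correct starting point (a point with exactly one coordinate equal to $1$ and the remaining ones all $<1$ or all $>1$) is obtained in the paper from convexity \emph{plus} $W_K(\Lambda_r)$-invariance (barycenter/diagonal argument), and the dichotomy it produces is essential: it is what forces the two different extension schemes of Cases 1.a and 1.b.

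Second, the inductive exhaustion of the octant, which you dispose of with ``applying Reductions 1 and 2 again and iterating over the remaining coordinate directions fills out the entire slice,'' is the substance of the proof and does not go through as stated. In the tube case with the remaining coordinates all $>1$, the rank-one envelopes in those directions are only $G_j\exp i(a_j,\infty)E_j\cdot x_0$ (Proposition \ref{RANKONE1}(ii)); they never cross the value $1$, so naive iteration stalls. The paper gets past this by the Weyl-invariance-plus-convexity segment trick of Case 1.b (the points $(x_1,\dots,tx_{r-1},(1-t)x_{r-1})$), lowering coordinates below $1$ one at a time until $(1,0,\dots,0)$ is reached, and only then blowing up along $E_1$; moreover the compatibility hypothesis of Lemma \ref{EXTENSION} is secured not by elements $k_\gamma$ inside the rank-one subgroups (that device belongs to Reduction 2), but by keeping the extension sets inside the closed chamber, or cutting with its interior as in (\ref{INTERSECTION}). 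Finally, your claim to fill the whole slice $\bigoplus_j[0,\infty)E_j$ cannot be reached by these methods: $0$ is an extreme point of the octant, and a convex hull of sets avoiding $0$ still avoids it, so the construction yields at best an extension over $\Xi^+\setminus G/K$. The paper needs the additional, separate argument that $G\cdot x_0\cong G/K$ is a maximal-dimensional totally real submanifold, across which the holomorphic extension is automatic (citing [Fie82]); this endgame is absent from your proposal.
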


\smallskip
\begin{proof} The proof of the theorem  consists of a sequence of rank-one reductions and convexifications (reduction 1), until an extension of the lift $f \colon \exp i{\mathcal D^\llcorner}\cdot x_0\to \widehat D$ 
to the whole $\exp i \Lambda_r^\llcorner\cdot x_0$ is obtained. 
Such an extension is constructed so that it satisfies the assumptions of  Lemma \ref{EXTENSION} and yields  a $G$-equivariant  holomorphic extension of the map $f\colon D\to \widehat D$  to the whole~$\Xi^+$. 
Then the theorem follows from (ii) of Proposition \ref{TARGET}.
 
\REM{
As in  Section 2,  denote by  $G_j$ (resp. $G_j^\bullet$) the rank-one subgroup (resp. the real rank-one subgroup) of $G$   associated to the root $ \lambda_j\in \{\lambda_1,\ldots,\lambda_r\}$,  
and by $K_j$ (resp. $K_j^\bullet$) the intersection $G_j\cap K$  (resp. $K_j^\bullet\cap G$). Then $G_j/K_j$ and $G_j^\bullet /K_j^\bullet$ are rank-one Hermitian symmetric spaces  with complexification  
$G_j^\C/K_j^\C\cong SL(2,\C)/SO(2,\C)$ and $(G_j^\bullet)^\C/(K_j^\bullet)^\C\cong SL(m+1,\C)/SL(m,\C)$, for some $m\in\N$, respectively. The envelopes of holomorphy of  invariant domains in $G_j^\C/K_j^\C$  and   in  $(G_j^\bullet)^\C/(K_j^\bullet)^\C$ are  univalent and described by Theorem \ref{??} and Theorem \ref{???}, respectively.  }

\sn 
We need to distinguish several cases.

\sn
{\it Case 1.} 
We first consider   a domain $D=G\exp i{\mathcal D}^\llcorner\cdot x_0$ in $ \Xi^+ $   
 satisfying the condition 
$$  {\mathcal D}^\llcorner~\bigcap ~\Lambda_r^{\llcorner}\setminus \left(\bigoplus_{j=1}^r[0,1)E_j\bigcup \bigoplus_{j=1}^r(1,\infty)E_j\right) \not=\emptyset.$$
The above condition implies that  $D$  is not contained in the crown $\Xi$  nor, in the tube case,  in the  domain  $S^+$.
By reductions 1 and 2, we can assume that  ${\mathcal D}^\llcorner$ is  a $W_K(\Lambda_r)$-invariant, open 
 convex subset of $\Lambda_r^{\llcorner}$.  
We claim that   ${\mathcal D}^\llcorner$ contains a point $X$   
with exactly one coordinate equal to 1, and the other ones either all $<1$ (Case 1.a) or all $>1$ (Case 1.b).  
This follows from the fact that an open $W_K(\Lambda_r)$-invariant convex set in $\Lambda_r^\llcorner$ intersects 
at least one of the open sets  $$\bigoplus_j[0,1)E_j\quad \hbox{or}\quad \bigoplus_j(1,+\infty)E_j.$$ In particular, it contains an open piece of  its boundary. Since the points with exactly one coordinate equal to 1 form an open dense subset of the boundary of each  set, the claim follows.

For the rank-one reduction, denote by   $G_j$  the rank-one subgroup  of $G$  associated to the root $ \lambda_j\in \{\lambda_1,\ldots,\lambda_r\}$,  
and by $K_j$  the intersection $G_j\cap K$ (see Sect.2). The quotient $G_j/K_j$ is a  rank-one Hermitian symmetric space of tube-type.  
The envelope  of holomorphy of an  invariant domain  in $G_j^\C/K_j^\C$ is  univalent and described by Theorem \ref{RANKONE1}.

\bn
{\it Case 1.a.}   By the  $W_K(\Lambda_r)$-invariance of ${\mathcal D}^\llcorner$ ,   we can assume that $({\mathcal D}^\llcorner)^+$ contains a point
\begin{equation} \label{HP1}X=(1,x_2, \dots, x_r),\qquad\hbox{with}\quad  1>x_2>\ldots x_r>0.\end{equation}   

\pn
Our first goal is to obtain an extension of  $f$ to a set 
$\exp i\widetilde{\mathcal D}\cdot x_0$, where
 $\widetilde{\mathcal D}$ is an open   $W_K(\Lambda_r)$-invariant convex set in $\Lambda_r^\llcorner$ containing $\mathcal D^\llcorner$ and  the point $(1,0,\ldots,0).$
This requires a number of steps aimed at  gradually extending  $f$  to  $W_K(\Lambda_r)$-invariant larger  sets
 $\exp i \mathcal C\cdot x_0$, with $\mathcal C$ containing $\mathcal D^\llcorner$ and, in order,  the points  
$$ (1,x_2,\ldots,x_{r-2},x_{r-1},0),\quad  (1,x_2,\ldots,x_{r-2},0,0) ,~\ldots ~, ~(1,0,\ldots,0) .$$  
 Denote by  $$\int(\overline{(\Lambda_r^\llcorner)^+})$$  the interior  of $\overline{(\Lambda_r^\llcorner)^+}$ (in the relative topology), which coincides with $\overline{(\Lambda_r^\llcorner)^+}\setminus {\mathcal H},$ 
where $\mathcal H:= \cup_{\gamma \in W_K(\Lambda_r)} \{Fix(\gamma)\}$ 
denotes the set of reflection hyperplanes in $\Lambda_r^\llcorner$.   Likewise, denote by 
$$\int( ({\mathcal D}^\llcorner)^+ )=({\mathcal D}^\llcorner)^+\cap \int(\overline{(\Lambda_r^\llcorner)^+})$$ the interior  of $({\mathcal D}^\llcorner)^+$ in the relative topology.  
 Under the above assumption (\ref{HP1}),  the interior of $({\mathcal D}^\llcorner)^+$  contains  an  open set  of the form
$$U+V,\quad \hbox{with} \quad   (1,x_2, \dots x_{r-1},0)\in U\subset E_r^\perp,\quad{\rm and}\quad V=(a_r,b_r) E_r .$$
 \pn
Write an arbitrary element $W\in U+V$ as 
$$W=Y+Z,\quad \hbox{with} \quad Y=Y(W)\in U ~{\rm and}~Z=Z(W)\in V $$
depending continuously on $W$.
Define  
$$D_r:=G_r\exp i V\cdot x_0.$$ 
Since $G_r$ commutes with the subgroups $G_j$, for $j\not=r$,  right    translation by $\exp iY$
$$D_r\to D,\quad g\exp iZ\cdot x_0\mapsto \exp iY g\exp iZ\cdot x_0$$
is $G_r$-equivariant and  so is the holomorphic map  
$$f_Y\colon D_r\to \widehat D, \quad g\exp iZ\cdot x_0\to f(\exp iYg\exp iZ\cdot x_0).$$
Recall that the envelope of holomorphy of $D_r$ is univalent and given by $\widehat D_r= G_r\exp i \widehat V\cdot x_0$, with $\widehat V=[0,b_r)E_r$ (see Prop. \ref{RANKONE1}(i)).  
As $W$ varies in $U+V$, one obtains   a family of $G_r$-equivariant holomorphic maps 
$$\hat f_Y\colon \widehat D_r \longrightarrow  \widehat D,$$ parametrized by $Y=Y(W)\in U$. 

Define a map $$\hat f\colon \exp i  (U+\widehat V)\cdot x_0 \to \widehat D,\quad\hbox{by}\quad 
 \exp iW\cdot x_0\to \hat f_Y(\exp iZ\cdot x_0).$$
Observe that $U+\widehat V$  is an open set in~$\Lambda_r^\llcorner $, since  it  is  entirely  contained in the interior of $\overline{(\Lambda_r^\llcorner)^+}$. 
Next we  show that $\hat f$  satisfies all the assumptions of the extension Lemma~\ref{EXTENSION}. 

\sn
- $\hat f$  concides with $f$ on the   set $\exp i (U+ V) \cdot x_0$, since for every $Y\in U$,  
 $$\hat f_Y(\exp iZ\cdot x_0)=f_Y(\exp iZ\cdot x_0), \quad \hbox{for all }\quad Z\in V .$$ 
- $\hat f$ is a lift of the natural inclusion of $\exp i(U+\widehat V)\cdot x_0$ into $\Xi^+$.  The analytic continuation principle, 
applied to each holomorphic map $q\circ \hat f_Y\colon \widehat D_r\to \Xi^+$,  implies 
\begin{equation} \label{RESTRICTION2} q\circ \hat f_{Y}|_{\exp i\widehat V\cdot x_0}=Id|_{\exp i\widehat V\cdot x_0},\qquad   W\in U+\widehat V .
\end{equation}
-  $\hat f$ is continuous, by  similar arguments as the ones used  in Lemma \ref{PONTE}.\pn
-  for every $X\in  U+\widehat V$, one has   
$G_{\exp i X\cdot x_0}=G_{\hat f(\exp i X\cdot x_0)}$. This  follows from Lemma~\ref{ISOTROPIE}, by taking $\mathcal C=U+\widehat V$ and $\mathcal F=U+\widehat V$, and arguing as  in Lemma  \ref{PONTE}.
\pn
- since  $U+\widehat V$ is entirely contained in the perfect slice $\overline{(\Lambda_r^\llcorner)^+} $, the  compatibility conditions of Lemma \ref{EXTENSION} are automatically satisfied.

As a consequence $\hat f\colon {\exp i(U+\widehat V)}\cdot x_0\to \widehat D$  extends to a $G$-equivariant holomorphic map 
\begin{equation}\label{EXTENDED1}\hat f\colon G \exp i(U+\widehat V)\cdot x_0 \to \widehat D.\end{equation}
Note that $G  \exp i(U+\widehat V)\cdot x_0$ is an open set  in $\Xi^+$, which  has open intersection with $D$ and  coincides with $G \exp i\left(W_K(\Lambda_r)\cdot (U+\widehat V)\right) \cdot x_0$. 
By  the analytic continuation principle, the  map (\ref{EXTENDED1}) coincides with  $f\colon D\to \widehat D$  on the points of  $D$. As a result, we have obtained a $G$-equivariant holomorphic  extension of  $f$ to the  larger  domain 
$G\exp i\widetilde{\mathcal D}\cdot x_0$, where
$$\widetilde{\mathcal D}=\mathcal D^\llcorner~ \bigcup ~W_K(\Lambda_r)\cdot (U+\widehat V).$$
The set $\widetilde{\mathcal D}$ contains the point $(1,x_2, \dots x_{r-1},0)$, the projection of the initial point $X$ onto the hyperplane $x_r=0$, and 
by  reduction 1,  may be assumed to be  convex. 
 
So set $\mathcal D^\llcorner=\widetilde{\mathcal D}$ and, for the second step, take  
 an open subset of $\int((\mathcal D^\llcorner)^+)$ of the form $U+V$, with $$ (1,x_2, \dots x_{r-2},0,0)\in  U\subset E_{r-1}^\perp,\quad{\rm and}\quad V=(a_{r-1},b_{r-1})E_{r-1}.$$ 
Write  an arbitrary element $W\in U+V$ as 
$$W=Y+Z,\quad \hbox{with} \quad Y=Y(W)\in U\quad{\rm and}\quad Z=Z(W)\in V $$
depending continuously on $W$.

Define $D_{r-1}:=G_{r-1}\exp i V\cdot x_0$.   
The map 
$$ D_{r-1}\to D,\qquad g\exp iZ\cdot x_0\to  \exp iY g\exp iZ\cdot x_0$$
defines a $G_{r-1}$-equivariant holomorphic embedding of $D_{r-1}$  into $D$ and  induces a $G_{r-1}$-equivariant holomorphic map  
 $ \widehat D_{r-1} \longrightarrow  \widehat D .$ 
As in the previous step, consider the family of $G_{r-1}$-equivariant holomorphic maps 
$$\hat f_Y\colon \widehat D_{r-1} \longrightarrow  \widehat D , $$  
obtained by letting $W$ vary in $ U+V$, and   
define a map $$\hat f\colon \exp i  (U+\widehat V)\cdot x_0 \to \widehat D,\quad{\rm by}\quad   \hat f(\exp iW\cdot x_0):=   \hat f_Y(\exp iZ\cdot x_0). $$
Recall that 
the envelope of holomorphy of $D_{r-1}$ is univalent and given by $\widehat D_{r-1}=G_{r-1}\exp i \widehat V\cdot x_0 $, where $\widehat V:=[0,b_{r-1})E_{r-1}$ (see Prop. \ref{RANKONE1}(i)).  
Since this time $U+\widehat V$ is not entirely contained in the interior of $\overline{(\Lambda_r^\llcorner)^+}$, 
we restrict $\hat f$  to the set $\exp i\mathcal C\cdot x_0$, where 
\begin{equation} \label{INTERSECTION} \mathcal C=(U+\widehat V)\bigcap \int(\overline{(\Lambda_r^\llcorner)^+}) .\end{equation}
The same arguments used in the previous step show that the restricted map $\hat f|_{\exp i\mathcal C\cdot x_0}$, coincides with  $f$ on  $D\cap \exp i\mathcal C\cdot x_0$, it is a lift  of the natural inclusion of $\exp i\mathcal C \cdot x_0$ into $\Xi^+$, and it is continuous.

Moreover, because of (\ref{INTERSECTION}), the compatibility conditions of Lemma \ref{EXTENSION} are automatically  satisfied on $\exp i{\mathcal C}\cdot x_0$. As a consequence,  
 $\hat f|_{\exp i{\mathcal C}\cdot x_0}$ extends to a $G$-equivariant holomorphic map 
$$\hat f\colon G\cdot \exp i{\mathcal C}\cdot x_0 \to \widehat D.$$
By   the analytic continuation principle, the above map coincides with  $f\colon D\to \widehat D$ on the points of $D $. 
As a result,  we have obtained an extension of  $f$ to the domain 
$G\exp i\widetilde{\mathcal D}\cdot x_0$, where
$$\widetilde{\mathcal D}=\mathcal D^\llcorner~ \bigcup ~W_K(\Lambda_r)\cdot (U+\widehat V).$$
The above set  contains the point $(1,x_2, \dots x_{r-2},0,0)$, the projection of the initial point $X$ onto the linear subspace   $x_r=x_{r-1}=0$, and,  
by  reduction 1, it  may be assumed to be  convex. 
 By applying  the  procedure, used for the  coordinate $x_{r-1}$, to  the  coordinates 
$x_{r-2},\ldots x_2$, we obtain  an extension of  $f$ to a domain 
$G\exp i\widetilde{\mathcal D}\cdot x_0$, where
 $\widetilde{\mathcal D}$ is an open,  $W_K(\Lambda_r)$-invariant convex set in $\Lambda_r^\llcorner$ containing $\mathcal D^\llcorner$ and  the point~$(1,0,\ldots,0).$

 Set $\mathcal D^\llcorner=\widetilde{\mathcal  D}$  and, for the
 final step, take an open subset of $\int((\mathcal D^\llcorner)^+)$ of the form $U+V$, with $$U \subset E_1^\perp, ~{\rm and}\quad V=(a_1,b_1)E_1,\quad  a_1<1<b_1  .$$  
This time  $D_1=G_1\exp i V\cdot x_0 $ is a $G_1$-invariant complex submanifold $G^\C/K^\C$ whose envelope of holomorphy is univalent and given by $\widehat D_1=G_1\exp i \widehat V\cdot x_0$, with $\widehat V:=[0,\infty)E_1$ (see Prop. \ref{RANKONE1}(iii)). 

The usual procedure yields a $G$-equivariant extension of  $f\colon D\to\widehat D$  to the whole $\Xi^+\setminus G/K$. Since the orbit  $G/K$ is  a totally real submanifold of $\Xi^+$  (of maximal dimension), $f$ extends to the whole $\Xi^+$ (see \cite{Fie82}), as desired.  

 \bn
{\it Case 1.b.}   By the  $W_K(\Lambda_r)$-invariance of ${\mathcal D}^\llcorner$,   we can assume that $({\mathcal D}^\llcorner)^+$ contains a point
\begin{equation}\label{HP2}X=(x_1,x_2, \dots, 1),\qquad\hbox{with}\quad  x_1>x_2>\ldots x_{r-1}>1.\end{equation}   
Our goal is to contruct  an extension of $f$
 to a set 
$\exp i\widetilde{\mathcal D}\cdot x_0$, where
 $\widetilde{\mathcal D}$ is an open  $W_K(\Lambda_r)$-invariant convex set in $\Lambda_r^\llcorner$ containing $\mathcal D^\llcorner$ and  the point $(1,0,\ldots,0).$ Then the result follows from the last step in Case 1.a. 

 By the above assumption (\ref{HP2}),   the interior of  $({\mathcal D}^\llcorner)^+$  contains  an  open set  of the form
$$U+V,\quad \hbox{with} \quad  (x_1,x_2, \dots x_{r-1},0)\in U\subset E_r^\perp,\quad{\rm and}\quad   V=(a_r,b_r) E_r ,~  a_r<1<b_r .$$
Consider  $D_r=G_r\exp i V  \cdot x_0$ and recall that the envelope of holomorphy of $D_r$ is univalent and given by  $\widehat D_r=G_r\exp i \widehat V\cdot x_0$, with $\widehat V=[0,\infty)E_r$ (see Prop. \ref{RANKONE1}(iii)). The usual procedure
yields an extension of $f$ to a set $G\exp i\widetilde{\mathcal  D}\cdot x_0$, where  
$\widetilde{\mathcal  D}$ is an  open  $W_K(\Lambda_r)$-invariant convex   set in $\Lambda_r^\llcorner$,  containing  $(x_1,x_2, \dots x_{r-1},0)$. 

We claim that $\widetilde{\mathcal  D}$ can be assumed to contain  $(x_1,\dots,x_{r-2}, 1,0)$. 
Since $\widetilde{\mathcal  D}$ is $W_K(\Lambda_r)$-invariant and convex, it contains the point $(x_1,x_2, \dots,0, x_{r-1})$ and  the segment 
 $$(x_1,x_2, \dots ,tx_{r-1},(1-t)x_{r-1}), \quad {\rm for~}t\in[0,1].$$ In particular it contains  the point  $(x_1,x_2, \dots ,2/3\, x_{r-1},1/3\,  x_{r-1})$, which lies in $(\mathcal D^\llcorner)^+$ and has a smaller $(r-1)^{th}$ coordinate than $(x_1,x_2, \dots x_{r-1},0)$.  
  By iterating the procedure, we obtain a convex set $\widetilde{\mathcal  D}$ containing  $(x_1,x_2, \dots ,x_{r-1}^\prime,0)$, for some $x^\prime_{r-1}<1$. Then the claim follows from the convexity of $\widetilde{\mathcal  D}$ and the inequality $ x_{r-1}>1$.  

Acting in the same way on the coordinates $x_{r-1}, x_{r-2},\ldots, x_1$ we obtain a convex set $\mathcal D^\llcorner$ containing $(1,0,\ldots,0)$, as desired.

 \bn
 {\it Case 2.}  It remains to consider the  case of  $G/K$ not of tube-type and   $D=G\exp i{\mathcal D}^\llcorner \cdot x_0$ a domain entirely  contained in $\Omega^+$.  
By  Proposition \ref{SOTTODOMINI}, this is equivalent to requiring    
\begin{equation} \label{HP3} {\mathcal D}^\llcorner \subset  \bigoplus_{j=1}^r(1,\infty)E_j  .\end{equation}
By the  $W_K(\Lambda_r)$-invariance of ${\mathcal D}^\llcorner$ and (\ref{DPERFECT}),   we can assume that $({\mathcal D}^\llcorner)^+$ contains a point  
$$X=(x_1,x_2, \dots x_r),\qquad{\rm with}\quad   x_1> x_2>\ldots> x_r>1.$$
Our goal is to  show  that  the map $f$ extends  to  a set  $ \exp i\widetilde{\mathcal D} \cdot x_0$, where $\widetilde{\mathcal D} $  is an open  $W_K(\Lambda_r)$-invariant convex set in $\Lambda_r^\llcorner$ 
containing ${\mathcal D}^\llcorner$ and  the point $E_1=(1,0,\ldots,0)$.  Then the result follows from the last step in Case 1.a.

By the above assumption (\ref{HP3}), the interior of  $(\mathcal D^\llcorner)^+$  contains   an  open set  of the form
$$U+V,\quad \hbox{with} \quad  U\subset E_r^\perp,~ (x_1,x_2, \dots x_{r-1},0)\in U,\quad V=(a_r,b_r) E_r,\quad a_r>1 .$$
Since we are in the non-tube case,   
we can consider the $\theta$-stable  real rank-one subgroup $ G_r^\bullet$ of $G$  associated to the root $\lambda_r$ (see Sect.\,2).  Then  the intersection $K_r^\bullet:= G_r^\bullet\cap K$ is a maximal subgroup of  $G_r^\bullet$ and 
the quotient $G_j^\bullet/K_j^\bullet$ is a  rank-one Hermitian symmetric space,  not of tube-type. 

Define  $D_r:= G_r^\bullet\exp i V\cdot x_0$.  Since $ G_r^\bullet$ commutes with the rank-one subgroups $G_i$ associated to the roots $\lambda_j$, for $j\not= r$ (Lemma \ref{COMMUTATIVITY}), the map 
$$ D_r\to D,\quad g\exp iZ\cdot x_0 \longrightarrow \exp iY g\exp i Z\cdot x_0$$  
defines a $ G_r^\bullet$-equivariant holomorphic embedding  and  induces a $ G_r^\bullet$-equivariant holomorphic map  
 $ \widehat D_r \longrightarrow  \widehat D .$ 
 Recall that the envelope of holomorphy of $D_r$ is univalent and given by $\widehat D_r= G_r^\bullet\exp i \widehat V\cdot x_0$, with $\widehat V=[0,\infty)E_r$ (see Prop. \ref{RANKONE2}(ii)). 
The same procedure used   in the previous cases, yields a $G$-equivariant holomorphic map 
$$\hat f\colon G\exp i\mathcal C\cdot x_0\to \widehat D,$$
where $\mathcal C$ is an open convex $W_K(\Lambda_r)$-invariant subset of $\Lambda_r^\llcorner$ containing $(x_1,\ldots, x_{r-1},0)$.
By applying the same arguments to the remaining variables $x_{r-1}, x_{r-2},\ldots,x_2$, we achieve the desired extension.  This concludes the proof of the theorem.
\end{proof}

\bigskip

Stein $G$-invariant domains in $\Xi$ and $S^+$ were classified in \cite{GiKr02} and \cite{Nee99}, respectively.  Inside  the crown $\,\Xi$,
as well as inside
$\,S^+\,,$ an invariant domain  can be described via a semisimple abelian  slice,
  and Steiness
is characterized by logarithmic  convexity of such a  slice. These results together with  the above theorem  conclude   the classification of Stein $G$-invariant domains in $\Xi^+$.

\mn
\begin{cor} \label{SUBDOMAINS} Let $G/K$ be a Hermitian symmetric space and let $D$ be a Stein $G$-invariant proper subdomain of $\Xi^+$.  
  \item{{\rm (i)}} If $G/K$ is of tube type, then  either  $D\subseteq \Xi$  or  $D\subseteq S^+$.  
  \item{{\rm (ii)}} If $G/K$ is not of tube type, then $D\subseteq \Xi$.

\end{cor}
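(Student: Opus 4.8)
The plan is to derive the corollary directly from Theorem~\ref{MAIN} by contraposition, using only the elementary fact that a Stein domain coincides with its own envelope of holomorphy. So let $D$ be a Stein, $G$-invariant \emph{proper} subdomain of $\Xi^+$, and suppose, aiming at a contradiction, that $D\not\subseteq\Xi$ and, in the tube case, also $D\not\subseteq S^+$; in the non-tube case only the first condition is imposed. These are precisely the hypotheses of Theorem~\ref{MAIN}, so the envelope of holomorphy $\widehat D$ of $D$ is univalent and equals $\Xi^+$. In terms of diagram~(\ref{envelope}) this means that the projection $q\colon\widehat D\to\Xi^+$ is injective with image all of $\Xi^+$, hence a biholomorphism.

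Next I would exploit the Steinness of $D$: since $D$ is Stein it coincides with its envelope of holomorphy, i.e.\ the canonical embedding $f\colon D\to\widehat D$ in diagram~(\ref{envelope}) is a biholomorphism. Composing, $q\circ f\colon D\to\Xi^+$ is then a biholomorphism; but the commutativity of the diagram says that $q\circ f$ is exactly the inclusion $D\hookrightarrow\Xi^+$, so $D=\Xi^+$, contradicting the assumption that $D$ is a proper subdomain. This forces $D\subseteq\Xi$ or, in the tube case, $D\subseteq S^+$, which is precisely statements (i) and (ii).

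I do not expect any genuine obstacle here: the corollary is merely a repackaging of Theorem~\ref{MAIN}, the only substantive point being the standard identification of a Stein domain with its envelope of holomorphy. To obtain the \emph{complete} classification of Stein $G$-invariant subdomains of $\Xi^+$ alluded to in the introduction, one then combines (i)--(ii) with the known descriptions of Stein invariant domains inside the crown $\Xi$ (from \cite{GiKr02}) and inside $S^+$ (from \cite{Nee99}), where Steinness is characterized by logarithmic convexity of the corresponding semisimple abelian slice; but that classification input is not needed for the statement of the corollary itself.
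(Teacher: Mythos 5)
Your argument is correct and is essentially the paper's own (largely implicit) proof: the corollary is deduced from Theorem~\ref{MAIN} together with the standard fact that a Stein domain coincides with its envelope of holomorphy, so a Stein proper subdomain violating the containments would force $D=\Xi^+$, a contradiction. The only cosmetic remark is that, as in the theorem, $G/K$ should be taken irreducible.
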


\mn
\begin{remark} Let $G/K$ be an arbitrary irreducible, non-compact, Riemannian symmetric space. The crown domain in $G^\C/K^\C$  is given by $\Xi=G\exp i
\Omega_{AG}\cdot x_0$, where $\Omega_{AG}:=\{ H\in\a \ : \ |\alpha(H)|<{\pi\over 2}, ~
{\rm \  for \  all\ }  \alpha \in \Delta(\g,\a) \} $.  Invariant   domains  in $\Xi$ can be written as  $D=G\exp i\Omega\cdot x_0$, for some $W_K(\a)$-invariant open set $\Omega\subset \Omega_{AG}$.
Stein invariant domains have been characterized in \cite{GiKr02} as the ones  for which the slice
 $\exp i \Omega \cdot x_0$ is logarithmically convex.  However, we are not aware of an explicit  univalence
statement  for the envelope of holomorphy of an arbitrary  invariant domain $D\subset \Xi$. For the sake of completeness, we outline the proof of this fact here.

Let   $D=G\exp i\Omega\cdot x_0$ be an invariant domain in $\Xi$.
As $\Xi$ is Stein, one has a commutative diagram
\begin{equation}
\xymatrix{&\widehat D\ar[d]^q\\
D\ar[ur]^f \ar[r]^{Id}& \Xi \ .  }
\label{envelope}
\end{equation}
Define $\Omega^+:=\Omega\cap \overline{\a^+}$, where $\overline{\a^+}$ is a closed Weyl chamber in $\a$, and  let $\Omega_\circ$ be the connected component of
$\Omega$ containing $\Omega^+$. Consider   the set
$\Gamma^0$ of simple reflections in $\a$
whose fixed point hyperplanes contain a non-zero element of
$\overline{\a^+}$ and let  $W^0$ be the subgroup of $W_K(\a)$ generated by $\Gamma^0$. As in Lemma \ref{COMPONENTE}, one can show that~$W^0\cdot \Omega^+=\Omega_\circ $.

Set $A:=\exp \a$ and consider   the $r$-dimensional complex submanifold $A\exp i\Omega_\circ\cdot x_0$ of $D$, which is biholomorphic to a tube domain in $\C^r$ with base $\Omega_\circ$.
The restriction
$f|_{A\exp i\Omega_\circ\cdot x_0} : A\exp i\Omega_\circ\cdot x_0\to \widehat D$
of $f$ to $A\exp i\Omega_\circ\cdot x_0$
extends to an  $A$-equivariant holomorphic map
$A\exp i\conv(\Omega_\circ)\cdot x_0\to \widehat D$.
Then the same arguments  as in   Proposition \ref{REDUCTION1}  show that the inclusion $f\colon D\to\widehat D$ admits a $G$-equivariant extension to the domain $G\exp i\conv(\Omega_\circ)\cdot x_0$.
Thus, without loss of generality, we may assume that all connected components
of $\Omega$ are convex.

The second  part of the proof consists of showing that the map $f$ admits a $G$-equivariant holomorphic extension  to the domain $  G\exp i \conv(\Omega)\cdot x_0$. For this purpose, we first consider the case where  $\Omega$ consists of two connected components $\Omega_\circ$ and~$s_\alpha\cdot \Omega_\circ$, simmetrically placed with respect to
the fixed point hyperplane $Fix(s_\alpha)$ of  a reflection  $s_\alpha\in  W_K(\a)\setminus W^0$.
Let   $H_\alpha$ be a generator of $Fix(s_\alpha)^\perp$.
Choose $X_\alpha\in\g^\alpha$ so that the vectors $\{X_\alpha,\theta X_\alpha, H_\alpha\}$ 
generate a  $\theta$-stable  $\s\l(2)$-subalgebra.  
Denote by $G_\alpha$ the corresponding subgroup of $G$ and by $K_\alpha=G_\alpha \cap K$. The  quotient $G_\alpha/K_\alpha$ is a Hermitian rank-one symmetric space of tube type.

Let $X$ be an arbitrary element in $\Omega_\circ$. Then $X$ decomposes in a unique way as
 $X=Y+Z,$ where $Y=Y(X)\in Fix(s_\alpha)$ and $Z=Z(X)\in \R H_\alpha$ depend continuosly
on $X$. Define
 $$\Sigma_Y :=\R H_\alpha\cap ((\Omega_\circ \cup
s_\alpha\cdot \Omega_\circ)-Y) \quad \hbox{and}\quad
D_Y=G_\alpha\exp i \Sigma_Y \cdot x_0.$$ 
Then $D_Y$ is biholomorphic to a $G_\alpha$-invariant domain inside  the crown $\Xi_\alpha\subset G^\C_\alpha/ K^\C_\alpha$.
By Proposition \ref{RANKONE1}, the  envelope of holomorphy of $D_Y$ is univalent and  it is  given by
$\widehat D_Y=G_\alpha\exp i\conv (\Sigma_Y) \cdot x_0$.

Note that $Y + \Sigma_Y \subset \Omega_\circ \cup
s_\alpha\cdot \Omega_\circ$, and that $\alpha(Y)=0$, for all 
 $Y\in Fix(s_\alpha)$.  It follows that   $\exp iY$ commutes with $G_\alpha$ and the map 
$$D_Y  \to  D,
\quad \quad g \exp iZ \cdot x_0 \to  \exp iY g \exp iZ \cdot x_0\, $$
 is a $G_\alpha$-equivariant embedding.

As $X$ varies in $\Omega_\circ$, one obtains a family of
$G_\alpha$-equivariant holomorphic maps
$f_Y\colon D_Y \to \widehat D,$ defined by
$g\exp iZ\cdot x_0\mapsto f(\exp iYg\exp iZ\cdot x_0)$,  
equivariantly extending to
$\hat f_Y\colon \widehat D_Y\to \widehat D$
(cf. Lemma \ref{UNIVERSALITY}).

Define $\widetilde \Omega:=
\bigcup_{X\in \Omega_\circ}Y + \conv (\Sigma_Y)$ and
 $$
\hat f: \exp i  \widetilde \Omega\cdot x_0\to \widehat D,\quad \exp iX\cdot x_0 \to
\hat f_Y(\exp iZ\cdot x_0).$$
 By Lemma 7.7 in \cite{Nee98}, the set
$\widetilde \Omega $ coincides with $\conv(\Omega_\circ\cup s_\alpha \Omega_\circ).$
Arguments analogous to the ones  used in the proof of  Proposition \ref{REDUCTION2}  show that $\hat f$
is a continuos extension of the lift $f|_{\exp i\Omega\cdot x_0}
\colon\exp i\Omega\cdot x_0\to\widehat D$
and that it satisfies 
the assumptions of Lemma \ref{EXTENSION}.  As a consequence, $\hat f$  further extends 
to a $G$-equivariant holomorphic map
$\hat f\colon G\exp i\widetilde \Omega \cdot x_0\to \widehat D$. 
By iterating  the above procedure if necessary, one eventually obtains a $G$-equivariant holomorphic extension 
$$\hat f\colon G\exp i\conv(\Omega) \cdot x_0\to \widehat D.$$
 Since the domain $G\exp i \conv(\Omega)\cdot x_0$ is Stein (see \cite{GiKr02}), it coincides with the envelope
of holomorphy $\widehat D$ of $D$.
This shows that
the envelope of holomorphy of $D=G\exp i  \Omega\cdot x_0$ is univalent and
given by $ G\exp i \conv(\Omega)\cdot x_0.$
\end{remark}

\bn
\begin{remark}  
The  univalence  result  of Theorem \ref{MAIN} does not hold true
for equivariant Riemann domains $p\colon X\to G^\C/K^\C$, which are not envelopes  of holomorphy. For a Hermitian symmetric space $G/K$ of tube type one can  construct a non-trivial $G$-equivariant Stein covering  of the domain~$S^+\subset \Xi^+$.
\end{remark}


\bn

 \end{document}